    \def\ps@copyright{\ps@empty
    \def\@oddfoot{\hfil\small\copyright 2015, \UFC}}
\newcommand{\UFC}{Université de Franche-Comté}
\newcommand{\BibTeX}{{\scshape Bib}\kern-.08em\TeX}
\newcommand{\T}{\S\kern .15em\relax }
\newcommand{\AMS}{$\mathcal{A}$\kern-.1667em\lower.5ex\hbox
        {$\mathcal{M}$}\kern-.125em$\mathcal{S}$}
\newcommand{\finform}[1]{
\ifthenelse{\equal{\ref{detail:#1}}{\ref{bidon}}}{}{\hyperlink{detail:#1}{\hbox{\tiny{\hspace{1mm}$\blacktriangledown$}}}}}
\title{Sur la conjecture de Collatz}
\date {08-07-2016}
\author{Vincent Fleckinger}
\address{Laboratoire de Mathématiques\\
Université de Franche-Comté\\
16 Route de Gray, F-25000 Besançon}
\email{\href{mailto:vincent.fleckinger@univ-fcomte.fr}{vincent.fleckinger@univ-fcomte.fr}}
\urladdr{\href{http://lmb.univ-fcomte.fr/}{http://lmb.univ-fcomte.fr}}
\author{Ibrahim Abdoulkarim}
\address{Laboratoire de Mathématiques\\
Université de Franche-Comté\\
16 Route de Gray, F-25000 Besançon}
\email{\href{mailto:ibrahim.abdoulkarim@univ-fcomte.fr}{ibrahim.abdoulkarim@univ-fcomte.fr}}
\keywords{}
\renewcommand\CurrentInput{src/\jobname.tex}
\begin{document}
\maketitle
\begin{abstract} Cet article propose  une généralisation de  la
 conjecture Collatz, connue aussi sous le nom de problème $3n+1$, ou
 conjecture de Syracuse aux éléments du complété $2$-adique de
 ${\bf Q}$, noté ${\bf Q}_2$. L'introduction
 d'une isométrie $\phi_{2,3}$ de ${\bf Q}_2$
 adaptée au problème permet la reformulation de la conjecture  sous la
 forme $\phi_{2,3}({\bf Q})={\bf Q}$. 
 L'inclusion  $\phi_{2,3}^{-1}({\bf Q})\subset{\bf Q}$, qui est immédiate, donne la forme des rationnels vérifiant la
 conjecture, et leur densité dans ${\bf
 Q}_2$. 
 On étudie ensuite des variations sur le
 couple $(p,q)$ des nombres  
 intervenants dans la fonction que l'on
 itère, et on étudie les éléments du  complémentaire de
 $\bf Q$ dans $\phi_{p,q}({\bf Q})$.  On
 montre ainsi que les couples $(p,q)$
 vérifiant $q^{p-1}<p^p$ sont des candidats
 naturels pour vérifier $\phi_{p,q}({\bf
 Q})={\bf Q}$.
\end{abstract}

\setcounter{section}{-1}\section{Introduction}
Dans la suite une suite $(u_n)_{n\in{\bf N}}$ est dite ultimement périodique s'il
existe un entier $n_0$ tel que la suite
$(u_{n_0+n})_{n\in{\bf N}}$ soit périodique.

On considère l'application $f$ de ${\bf N}$
dans ${\bf N}$ définie par 
\[\forall n\in{\bf N}, f(n)=\left\{\aligned 
&	\frac{n}{2},\text{ si }2|n,\\
&\frac{3n+1}{2},\text{ sinon,}\endaligned
\right.\]
et la suite $u=(u_n)_{n\in{\bf N}}$ définie par $u_0$ et la relation de récurrence
\[\forall n\in{\bf N}, u_{n+1}=f(u_n).\]
Cette suite est une variante de la suite de
Syracuse classique, et permet d'énoncer la
conjecture de Collatz sous la forme suivante :
 \begin{conj} Soit $u$ un entier positif,
	 il existe un entier $n_0$ vérifiant $u_{n_0}=1$. La suite est alors ultimement
  périodique et vérifie pour tout entier naturel $k$, $u_{n_0+2k}=1$ et $u_{n_0+2k+1}=2$.
\end{conj}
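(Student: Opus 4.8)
The plan is to lift the dynamics of $f$ from $\mathbf{N}$ to the $2$-adic integers $\mathbf{Z}_2$ and to encode a starting value by the infinite sequence of parities of its iterates. Writing $\epsilon_n(x)$ for the parity of $f^{(n)}(x)$, one assembles the \emph{parity vector} $\Phi(x)=\sum_{n\ge 0}\epsilon_n(x)\,2^n\in\mathbf{Z}_2$. The first step is to verify that the accelerated map $f$ extends to a continuous self-map of $\mathbf{Z}_2$ (sending $x\mapsto x/2$ on evens and $x\mapsto(3x+1)/2$ on odds), and that $\Phi$ is the $2$-adic isometry $\phi_{2,3}$ announced in the abstract. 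Concretely, since one further parity bit pins down $x\bmod 2^{n+1}$ once $x\bmod 2^n$ is known, $\Phi$ is injective and $1$-Lipschitz with a $1$-Lipschitz inverse, hence a homeomorphism of $\mathbf{Z}_2$ conjugating $f$ to the shift $S\bigl(\sum a_n 2^n\bigr)=\sum a_{n+1}2^n$.

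Next I would translate the conjecture into a statement about $\phi_{2,3}$. A starting value $u_0$ is ultimately periodic under $f$ exactly when its parity vector is an eventually periodic $2$-adic expansion, i.e. an element of $\mathbf{Q}\cap\mathbf{Z}_2$; and reaching the cycle $\{1,2\}$ corresponds to the eventually periodic vector coding that $2$-cycle. The easy inclusion $\phi_{2,3}^{-1}(\mathbf{Q})\subset\mathbf{Q}$ records the classical fact that an eventually periodic parity vector is realised by a \emph{rational} preimage: one solves the affine recurrence attached to the periodic block and finds a unique $2$-adic fixed point, visibly a rational with odd denominator. This already yields the explicit shape of the rationals satisfying the conjecture and their density in $\mathbf{Q}_2$.

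The substance of the conjecture is the reverse inclusion $\mathbf{Q}\subset\phi_{2,3}^{-1}(\mathbf{Q})$, equivalently $\phi_{2,3}(\mathbf{Q})\subset\mathbf{Q}$: every rational has an eventually periodic parity vector, and for a positive integer the periodic tail is forced to be the trivial block of $\{1,2\}$. I would attack this by controlling the growth of $f^{(n)}(x)$: along a trajectory with $k$ odd steps among the first $n$ one has the exact identity $f^{(n)}(x)=(3^k x+c_n)/2^n$ with $c_n\in\mathbf{Z}$ determined by the parity vector, so the conjecture amounts to forcing $3^k<2^n$ often enough, and uniformly enough, to guarantee a return below the starting value while ruling out nontrivial cycles.

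This last step is where I expect the genuine obstacle to sit. The isometry and density arguments hand us the structural and \enquote{almost all} statements essentially for free, but pinning down \emph{every} positive orbit requires excluding both divergent trajectories and cycles other than $\{1,2\}$, for which the $2$-adic formalism supplies a clean language but no automatic contraction mechanism. The criterion $q^{p-1}<p^p$ highlighted in the abstract (here $p=2$, $q=3$, giving $3<4$) is exactly what singles out $(2,3)$ as a candidate for the full equality $\phi_{2,3}(\mathbf{Q})=\mathbf{Q}$, and the crux would be to convert that numerical inequality into the quantitative estimate on $3^k/2^n$ that closes the argument.
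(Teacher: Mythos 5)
What you propose is not a proof, and there is no proof in the paper to compare it against: the statement you were given is the Collatz conjecture itself, which the paper states as a \emph{conjecture} and never claims to prove. Your first two steps --- extending $f$ to $\mathbf{Z}_2$, building the parity-vector isometry $\Phi=\phi_{2,3}$ that conjugates $f$ to the shift, deducing the easy inclusion $\phi_{2,3}^{-1}(\mathbf{Q})\subset\mathbf{Q}$ and the density in $\mathbf{Q}_2$ of the rationals satisfying the conjecture --- reproduce faithfully what the paper actually establishes (its parts 2 and 3), so as a reconstruction of the paper's framework your proposal is accurate.

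The genuine gap is your third step, which you name honestly but do not close, and which cannot be closed by the tools you invoke. The identity $f^{(n)}(x)=(3^k x+c_n)/2^n$ together with the inequality $3<4$ (the case $(p,q)=(2,3)$ of $q^{p-1}<p^p$) yields, in the paper, only a statement about the \emph{average} of $H(u_m)-H(u)$ over residue classes modulo $2^m$ (the estimate on $M(E_m)$ in part 5). Negative average drift excludes neither individual divergent orbits nor cycles other than $\{1,2\}$: both obstructions are invisible to an averaging argument over residues. Worse, any contraction estimate on $3^k/2^n$ that does not use positivity of $u$ in an essential way is doomed, because the same map does possess other rational cycles --- the paper's own table exhibits the integer cycle $(-17,-25,\dots,-68,-34,-17)$ for $(p,q)=(2,3)$ --- so the uniform quantitative bound you hope for is simply false on $\mathbf{Q}$; it would have to distinguish positive starting values from negative ones, and neither you nor the paper supplies a mechanism that does so. Your proposal is therefore a correct reformulation of the conjecture plus an accurate description of the open obstacle, not a proof of the statement.
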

L' application $f$ se prolonge naturellement
en une application continue de l'ensemble des
entiers $2$-adiques ${\bf Z}_2$ dans
lui-même. 

On peut généraliser le problème en utilisant des
applications construites sur le même principe.

On considère pour un nombre premier $p$, la
fonction $\epsilon_0$ définie par
$$\epsilon_0:{\bf Z}_p\mapsto
\{0,1,\dots,p-1\}, \epsilon_0(u)\equiv u \mod
p$$ 
et pour  un entier $q$ non nul, la   fonction $g_{p,q}(n)$ définie par
$$\forall u\in {\bf Z}_p,
g_{p,q}(n)\begin{cases}
\frac{n}{p},&\text{si $p|n$},\cr
\frac{qn+\epsilon_0(-qn)}{p},&\text{ sinon}
\end{cases}
.$$

On s'intéresse au  comportement de la suite
$(g_{p,q}^n(u))_{n\in{\bf N}}$, sachant que 
le cas $(p,q)=(2,3)$ redonne le problème de départ.

On obtient alors un premier  résultat  :
\begin{theo} Soit $p$ un nombre premier, $q$ un entier vérifiant
 $1<q<p$ et   $u$ un élément de ${\bf Z}_p$.
 L'élément $u$ est rationnel si et
 seulement si la suite
 $(g_{p,q}^n(u))_{n\in{\bf N}}$ est
 ultimement périodique.
\end{theo}

Introduisons la définition suivante :
\begin{defi}
	Une application $\psi$ de $\bf N$ dans
	$\overline{\bf N}={\bf N}\cup\{+\infty\}$ est dite
	admissible, si elle vérifie les propriétés
	suivantes :
	\begin{enumerate}
		\item 
			L'ensemble ${\bf N}_\psi=\psi^{-1}({\bf N})$ est
			de la forme $I_N=\{i\in {\bf N}, i\leq	N\}$ ou $\bf N$.
		\item  
			L'application $\psi$ est strictement croissante 
			sur l'ensemble ${\bf N}_\psi)$.
	\end{enumerate}
\end{defi}
 
On peut alors reformuler le résultat
précédent sous la forme  suivante.
\begin{coro}\label{theoprin}
 	Soit $p$ un nombre premier, $q$ un entier vérifiant $1<q<p$,
	$\psi$ une fonction admissible de ${\bf N}$
	dans $\overline {\bf N}$, et
	$(a_{n})_{n\in{\bf N}_\psi}$ une famille
	d'entiers à valeur dans $\{1,\dots,p-1\}$.
	La série entière 
	$G_\psi(T)=\sum_{n\in{\bf N}_\psi}^\infty a_{n}p^{\psi(n)}T^n$ 
	est rationnelle si et seulement si $G(1/q)$ est un rationnel de 
	${\bf Q}_p$.
\end{coro}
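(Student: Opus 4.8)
The plan is to set up an explicit dictionary between orbits of $g_{p,q}$ and the data $(\psi,(a_n))$, and then read the corollary off the preceding theorem. First I would dispose of the case ${\bf N}_\psi=I_N$: there $G_\psi$ is a polynomial and $G_\psi(1/q)$ a finite sum of rationals, so both sides hold trivially. Assume then ${\bf N}_\psi={\bf N}$, i.e. $\psi\colon{\bf N}\to{\bf N}$ strictly increasing, so that $G_\psi(T)=\sum_n a_n p^{\psi(n)}T^n$ is a genuine power series; since $p^{\psi(n)}\to 0$, the value $G_\psi(1/q)=\sum_n a_n p^{\psi(n)}q^{-n}$ converges in ${\bf Q}_p$ and $u:=-q^{-1}G_\psi(1/q)$ lies in ${\bf Z}_p$.

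The heart of the argument is the identification of $u$ with the base point of an orbit. Writing $x_n=g_{p,q}^n(u)$ and $b_n=\epsilon_0(x_n)$, each step is the affine relation $x_n=p\,x_{n+1}$ when $b_n=0$, and $x_n=(p\,x_{n+1}-\epsilon_0(-qb_n))/q$ when $b_n\neq 0$. Composing these over the first $N$ steps expresses $u=\alpha_N x_N+\beta_N$ with $\alpha_N=p^N q^{-o(N)}$, where $o(N)$ counts the odd steps before $N$, and a telescoping sum for $\beta_N$. Letting $N\to\infty$, the term $\alpha_N x_N\to 0$ $p$-adically and, after reindexing the odd steps by the strictly increasing enumeration $\psi$ of their positions and setting $a_k=\epsilon_0(-q\,b_{\psi(k)})\in\{1,\dots,p-1\}$, one gets exactly $u=-q^{-1}\sum_k a_k p^{\psi(k)}q^{-k}$, i.e. $G_\psi(1/q)=-qu$. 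I would then check that the itinerary map $u\mapsto(b_n)_n$ is a bijection of ${\bf Z}_p$ onto $\{0,\dots,p-1\}^{\bf N}$ (this is the isometry $\phi_{p,q}$: injectivity follows by an induction showing $v_p(u-u')\to\infty$ when two points share their itinerary, surjectivity by the analogous reconstruction), so that every admissible pair $(\psi,(a_n))$ arises from a unique $u\in{\bf Z}_p$ whose orbit has precisely this data.

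With the dictionary in place, the equivalence follows from the theorem together with the relation $G_\psi(1/q)=-qu$, which gives $G_\psi(1/q)\in{\bf Q}\iff u\in{\bf Q}$. For the implication $G_\psi(1/q)\in{\bf Q}\Rightarrow G_\psi$ rational, the theorem makes the orbit ultimately periodic; hence $(b_n)$ is ultimately periodic, say $b_{n+P}=b_n$ for $n\ge n_0$, so with $T$ the number of odd steps in one period one has $\psi(k+T)=\psi(k)+P$ and $a_{k+T}=a_k$ for large $k$. Then the coefficients satisfy $c_{k+T}=p^P c_k$ eventually, whence $(1-p^P T^T)G_\psi(T)$ is a polynomial and $G_\psi$ is rational. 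For the converse I would avoid any rigidity statement about linear recurrence sequences: if $G_\psi=A/B$ in lowest terms, the formal identity $B(T)G_\psi(T)=A(T)$ may be evaluated $p$-adically at $T=1/q$, because the series converges there, giving $B(1/q)G_\psi(1/q)=A(1/q)$; since $\gcd(A,B)=1$ forces $B(1/q)\neq 0$, we conclude $G_\psi(1/q)=A(1/q)/B(1/q)\in{\bf Q}$.

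The main obstacle is the dictionary of the second paragraph: establishing the telescoping identity $G_\psi(1/q)=-qu$ together with the bijectivity of the itinerary map, so that the abstract data we are handed is genuinely realized by a $g_{p,q}$-orbit and the theorem can be invoked. Once this is secured the two implications are short, the delicate-looking direction (rationality of the function $G_\psi$ from rationality of the value $G_\psi(1/q)$) being handled simply by evaluating the rational function at the rational point $1/q$, the $p$-adic convergence of the series making this legitimate.
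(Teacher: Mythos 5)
Your proof is correct and takes essentially the same route as the paper: the reconstruction identity $u=-\frac{1}{q}G_\psi(1/q)$, obtained from the telescoping relations and the bijectivity of the itinerary map (this is exactly the paper's isometry $\phi_{p,q}$ with its explicit inverse), is precisely the dictionary by which the paper presents this corollary as a reformulation of the preceding theorem, and your two remaining implications (ultimately periodic itinerary giving a rational series, and $p$-adic evaluation of the rational function at $1/q$ for the easy direction) coincide with the paper's Section 3 treatment. A purely cosmetic remark: in $(1-p^{P}T^{T})G_\psi(T)$ you use the letter $T$ both for the indeterminate and for the number of nonzero digits per period; rename one of them.
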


Une généralisation élémentaire à un corps de nombre $K$, permet
d'obtenir le corollaire suivant :

\begin{coro}\label{theo2} 
	Soit $\psi$ une fonction admissible de ${\bf N}$ dans
	$\overline{\bf N}$.
	La série entière
	$G_\psi(T)=\sum_{n\in{\bf N}_\psi}2^{\psi(n)}T^n$ 
	est rationnelle si et seulement si $G(1/\sqrt{3})$ est dans 
	${\bf Q}(\sqrt{3})\cap {\bf Z}_2[\sqrt{3}]$.
\end{coro}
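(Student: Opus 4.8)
The plan is to recognize Corollary~\ref{theo2} as the instance $p=2$, $q=\sqrt3$, $a_n=1$ of the version of Corollary~\ref{theoprin} obtained by replacing the ground field $\mathbf Q$ and its completion $\mathbf Q_p$ by a number field $K$ and the completion $K_{\mathfrak p}$ at a prime $\mathfrak p$ above $p$, the condition ``$G(1/q)$ rational in $\mathbf Q_p$'' becoming ``$G(1/q)\in K\cap\mathcal O_{\mathfrak p}$''. First I would fix the local picture for $K=\mathbf Q(\sqrt3)$. Since $3\equiv3\pmod4$ the ring of integers is $\mathbf Z[\sqrt3]$, and $2$ ramifies: putting $\pi=\sqrt3-1$ one has the Eisenstein relation $\pi^2+2\pi-2=0$, so $(2)=\mathfrak p^2$, $\pi$ is a uniformizer, and the completion has ring of integers exactly $\mathcal O_{\mathfrak p}=\mathbf Z_2[\pi]=\mathbf Z_2[\sqrt3]$. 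Two checks make this the right setting: $\sqrt3$ is a $\mathfrak p$-unit (its norm $-3$ is odd), and at both real embeddings of $K$ one has $|\sqrt3|=\sqrt3\in(1,2)$, so $q=\sqrt3$ plays precisely the role of an element with $1<q<p$. Consequently $1/\sqrt3$ is a unit and the $n$-th term $2^{\psi(n)}(\sqrt3)^{-n}$ has valuation $2\psi(n)$; since $\psi$ is admissible it is strictly increasing on $\mathbf N_\psi$, so either $\mathbf N_\psi$ is finite and $G_\psi$ is a polynomial, or $\psi(n)\to\infty$ and the series $G_\psi(1/\sqrt3)$ converges in $\mathbf Z_2[\sqrt3]$. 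In particular $G_\psi(1/\sqrt3)\in\mathbf Z_2[\sqrt3]$ is automatic, so the stated condition really concerns membership in $\mathbf Q(\sqrt3)$.

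The forward implication is the easy one, in the spirit of the immediate inclusion $\phi^{-1}(\mathbf Q)\subset\mathbf Q$ recalled in the abstract. If $G_\psi$ is rational, write it in lowest terms $G_\psi(T)=P(T)/Q(T)$ with $P,Q\in\mathbf Q[T]$ coprime (possible since the coefficients $2^{\psi(n)}$ are integers). The formal identity $Q(T)G_\psi(T)=P(T)$ gives, at the point of convergence $T=1/\sqrt3$, the relation $Q(1/\sqrt3)\,G_\psi(1/\sqrt3)=P(1/\sqrt3)$; since $P,Q$ are coprime this forces $Q(1/\sqrt3)\neq0$, whence $G_\psi(1/\sqrt3)=P(1/\sqrt3)/Q(1/\sqrt3)\in\mathbf Q(\sqrt3)$, and it lies in $\mathbf Z_2[\sqrt3]$ by the convergence above. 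Thus the value lies in $\mathbf Q(\sqrt3)\cap\mathbf Z_2[\sqrt3]$.

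The reverse implication is the substance of the statement, and is where the number-field generalization of Corollary~\ref{theoprin} (and of the theorem it reformulates) is genuinely needed: I must deduce rationality of the formal series $G_\psi(T)$ from $x=G_\psi(1/\sqrt3)\in\mathbf Q(\sqrt3)$. It is crucial here that one cannot instead split $x=A+B\sqrt3$ and reduce to the rational case, because the even and odd subseries lead to $q=3>p=2$, which violates $1<q<p$ and destroys the confinement underlying these results; the whole purpose of passing to $\mathbf Q(\sqrt3)$ is to trade $q=3$ for $q=\sqrt3<2$. The approach is therefore to rerun the argument underlying Corollary~\ref{theoprin} in $\mathcal O_{\mathfrak p}=\mathbf Z_2[\sqrt3]$ with $q=\sqrt3$: the Collatz-type map $g_{2,\sqrt3}$ and the digit function $\epsilon_0$ are set up on $\mathcal O_{\mathfrak p}$, and one shows that the orbit of $x$ is ultimately periodic if and only if $x\in K=\mathbf Q(\sqrt3)$, the confinement of rational orbits now resting on the archimedean bound $|\sigma(\sqrt3)|\in(1,2)$ at every embedding $\sigma$ (reflecting the candidate condition $q^{p-1}<p^{p}$, here $3<4$). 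Ultimate periodicity then translates, through the dictionary of Corollary~\ref{theoprin}, into an eventual linear recurrence for the data $(\psi(n))_n$, i.e. into rationality of $G_\psi$. The main obstacle is checking that every step of that argument survives the ramified extension: because $2=\pi^2u$ with $u$ a unit is no longer a uniformizer, the digit bookkeeping modulo $2$ must be redone in $\mathcal O_{\mathfrak p}/2\mathcal O_{\mathfrak p}$, and one must verify that both the global-to-local dictionary (membership in $K$ versus eventual periodicity) and the archimedean confinement carry over. Since $\sqrt3$ remains a $\mathfrak p$-unit of house in $(1,2)$ and the arithmetic of $\mathcal O_{\mathfrak p}$ is as tame as that of $\mathbf Z_2$, I expect these to go through with only bookkeeping changes, which is exactly the sense in which the generalization is ``elementary''.
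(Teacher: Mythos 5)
Your proposal is correct and follows essentially the same route as the paper: the paper justifies this corollary only by the remark that it is an ``elementary generalization to a number field'' of Corollary~\ref{theoprin}, i.e.\ exactly what you carry out --- take $p=2$, $q=\sqrt{3}$ with $1<\sqrt{3}<2$, work in the completion $\mathbf{Z}_2[\sqrt{3}]$ at the ramified prime above $2$, and get ultimate periodicity of orbits of elements of $\mathbf{Q}(\sqrt{3})$ from the archimedean confinement (bounded height, bounded denominators, hence finitely many values). Your write-up is in fact more detailed than the paper's own treatment (which never spells out the local structure or the digit set modulo $2$), and your warning that splitting into even/odd subseries would wrongly land in the case $q=3>p=2$ correctly identifies why the passage to $\mathbf{Q}(\sqrt{3})$ is the point of the statement.
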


Ces résultats donne naturellement naissance à
la question suivante :

\begin{enonce*}{Question}
 	Soit $p$ un nombre premier, $q$ un entier positif non nul,
	$\psi$ une fonction admissible de
	${\bf N}$ dans $\overline{\bf N}$, et
	$a_{n}$ une suite à valeur dans $\{1,\dots,p-1\}$.
	La série entière
	$G_\psi(T)=\sum_{n\in{\bf N}_\psi}a_{n}p^{\psi(n)}T^n$ 
	est-elle rationnelle si et seulement si
	$G_\psi(1/q)$ est un rationnel de ${\bf Q}_p$.
\end{enonce*}
Cette question dans le cas $(p,q)=(2,3)$ est alors une reformulation de la conjecture de Collatz.

Un élément de réponse est donné dans la
partie $5$, les couples $(p,q)$ pour lesquels
la réponse pourrait être positive sont ceux
vérifiant $q^{p-1}<p^p$.

\vskip 0.5cm

{\bf \sc  Plan de la suite de  l'article}
\begin{enumerate}
	\item[-]
		La première partie introduit les rappels
		sur les entiers
		$p$-adiques. En particulier le critère de
		rationalité d'un élément $p$-adique en
		fonction de son développement de Hensel.
	\item[-]
		La deuxième partie  donne la construction de
		l'isométrie $\phi_{p,q}$
		de ${\bf Q}_p$ adaptée au problème,
		et énonce le problème général.
	\item[-] 
		 La troisième partie développe la
		 combinatoire reliée au problème.
	\item[-] 
		La quatrième partie est une étude de  la périodicité 
		des éléments de $\phi_{p,q}^{-1}({\bf Q})$. 
	\item[-] 
		La cinquième partie étudie les rayons de
		convergences des séries introduites, et
		les éléments non rationnels de l'image de
		$\phi_{p,q}(Q)$. On obtient aussi des
		résultat sur le comportement en moyenne
		des $m$-premiers termes de  la suite
		$(u_n)$, lorsque $|u|$ est assez grand et
		décrit les classes modulo $p^m$.
	\item  
		La sixième partie traite des exemples
		dans le cas d'un anneau d'entiers
		algébriques monogène, et  de  ${\bf
		F}_p[[T]]$.
\end{enumerate}

\section{L'anneau des entiers $p$-adique ${\bf Z}_p$}

Le lecteur trouvera aisément des références sur les anneaux $p$-adiques,
par exemple le livre d'Yvette Amice sur ce
sujet~\cite{MR0447195}.

La valuation ultramétrique $p$-adique d'un entier $n$, 
définie par $$\forall n\in{\bf Z}, \nu_p(n)=\sup\{k,p^k|n\}$$ à valeur dans ${\bf N}\cup\{+\infty\}$ permet de définir le complété ${\bf Z}_p$ de ${\bf Z}$ pour la distance ultramétrique $|x-y|_p=p^{-\nu_p(x-y)}$. Pour cette métrique une série converge si et seulement si son terme général tend vers $0$. L'anneau ${\bf Z}_p$ est local d'idéal maximal $p{\bf Z}_p$.

 On  prolonge de façon naturelle, par
 multiplicativité, la valuation à ${\bf Q}$, dont le complété $p$-adique est noté ${\bf Q}_p$.

\begin{enonce*}{Notation}
	Afin de rappeler la métrique que l'on considère, 
	lorsqu'on écrit une égalité faisant intervenir 
	un passage à la limite, par exemple la somme 
	d'une série, on notera $=_\infty$ ou $=_p$ au lieu du signe $=$
	réservé aux égalités dans ${\bf Q}$ ou ${\bf Q}[[T]]$
	ne faisant pas intervenir un complété de ${\bf Q}$ particulier.
\end{enonce*}

	Ainsi tout élément de ${\bf Z}_p$ admet un développement 
	sous forme d'une série :

\begin{prop}[Développement de Hensel]
	Tout élément $u$ de ${\bf Z}_p$ admet un développement de 
	Hensel unique :
	$u=_p\sum_{n=0}^\infty a_ip^i$ où la suite $(a_i)$ 
	prend ses valeurs dans $\{0,\dots,p-1\}$.
\end{prop}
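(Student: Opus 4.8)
Le plan est de construire les chiffres $(a_i)$ par r\'ecurrence en exploitant l'identification de l'anneau quotient $\mathbf{Z}_p/p\mathbf{Z}_p$ avec le corps $\mathbf{Z}/p\mathbf{Z}$, puis de v\'erifier la convergence \`a l'aide du crit\`ere rappel\'e plus haut, et enfin d'\'etablir l'unicit\'e. Pour l'existence, je poserais $u_0 = u$ et, \`a chaque \'etape, je choisirais l'unique entier $a_i \in \{0,\dots,p-1\}$ repr\'esentant la classe de $u_i$ modulo $p\mathbf{Z}_p$. Le point essentiel est que la r\'eduction $\mathbf{Z}_p \to \mathbf{Z}/p\mathbf{Z}$ est surjective de noyau $p\mathbf{Z}_p$, ce qui assure \`a la fois l'existence et l'unicit\'e d'un tel $a_i$, et garantit que $u_i - a_i$ appartient \`a $p\mathbf{Z}_p$. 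Je poserais alors $u_{i+1} = (u_i - a_i)/p \in \mathbf{Z}_p$, d\'efinissant ainsi la suite $(u_i)$ et les chiffres $(a_i)$.

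Pour la convergence, en notant $S_N = \sum_{i=0}^N a_i p^i$ la somme partielle, une r\'ecurrence imm\'ediate donne $u - S_N = p^{N+1} u_{N+1}$, d'o\`u $\nu_p(u - S_N) \ge N+1$ et $|u - S_N|_p \le p^{-(N+1)}$. Comme $|a_i p^i|_p \le p^{-i}$ tend vers $0$, la s\'erie converge pour la m\'etrique $p$-adique d'apr\`es le crit\`ere rappel\'e au d\'ebut de cette partie, et l'in\'egalit\'e pr\'ec\'edente montre que sa somme vaut bien $u$, c'est-\`a-dire $u =_p \sum_{i=0}^\infty a_i p^i$.

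Pour l'unicit\'e, je supposerais deux d\'eveloppements $\sum_{i} a_i p^i =_p \sum_{i} b_i p^i$ avec $a_i, b_i \in \{0,\dots,p-1\}$, et je consid\'ererais, par l'absurde, le plus petit indice $k$ tel que $a_k \ne b_k$. En soustrayant et en divisant par $p^k$, on obtient $a_k - b_k =_p -\,p\sum_{i>k}(a_i - b_i)p^{i-k-1}$, de sorte que $a_k - b_k$ appartient \`a $p\mathbf{Z}_p \cap \mathbf{Z} = p\mathbf{Z}$ ; mais l'encadrement $|a_k - b_k| < p$ force $a_k = b_k$, une contradiction. Les deux suites co\"incident donc, ce qui \'etablit l'unicit\'e.

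Je n'attends pas d'obstacle v\'eritable dans cette preuve, qui est fondamentale. Le seul point d\'elicat est l'identification $\mathbf{Z}_p/p\mathbf{Z}_p \cong \mathbf{Z}/p\mathbf{Z}$ : c'est elle qui rend canonique le choix du chiffre \`a chaque \'etape et qui, conjugu\'ee au crit\`ere de convergence ultram\'etrique, fait fonctionner simultan\'ement l'existence, la convergence et l'unicit\'e.
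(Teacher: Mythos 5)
Votre d\'emonstration est correcte : construction r\'ecursive des chiffres au moyen de l'isomorphisme $\mathbf{Z}_p/p\mathbf{Z}_p \cong \mathbf{Z}/p\mathbf{Z}$, convergence de la s\'erie par le crit\`ere ultram\'etrique (le terme g\'en\'eral tend vers $0$, et $u - S_N = p^{N+1}u_{N+1}$ donne la somme), puis unicit\'e par examen du premier chiffre discordant en utilisant $p\mathbf{Z}_p \cap \mathbf{Z} = p\mathbf{Z}$. Notez que l'article ne d\'emontre pas cette proposition : elle y est rappel\'ee comme fait classique, avec renvoi au livre d'Amice, et votre argument est pr\'ecis\'ement la preuve standard qu'on y trouve ; il n'y a donc pas de divergence d'approche \`a signaler.
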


On peut faire plusieurs  remarques importantes sur ce développement :

\begin{enumerate}
	\item{}
		La suite $(a_i)_{i\in{\bf N}}$ est ultimement périodique si et
		seulement si $u$ est dans ${\bf Q}\cap {\bf Z}_p$. 
		Si $u=\frac{a}{b}$ est un rationnel écrit sous la forme réduite 
		$(a,b)=1$ et $b>0$, la période correspond alors à l'ordre de $p$
		modulo $b$. Ainsi
			$$-1=_p\sum_{n\in{\bf N}} (p-1)p^n.$$
	\item{}
		L'application $\theta : \{0,1,\dots, p-1\}^{\bf N}\mapsto {\bf Z}_p$
		définie par 
		$\theta(x)=\sum_{i\in{\bf N}}x_ip^i$ 
		est un homéomorphisme, et même une isométrie si l'on prend sur 
		$\{0,1,\dots, p-1\}^{{\bf N}}$ la
		distance définie par  
		\[d(x,y)=p^{-h(x,y)}\text{ où
		}h(x,y)=\inf\{i, x_i\neq y_i\},\]
		avec la convention $h(x,x)=+\infty$.
\end{enumerate}

\section{Construction de l'isométrie} 
\begin{prop}
	La fonction $g_{p,q}$ est uniformément continue pour 
	la topologie $p$-adique.
\end{prop}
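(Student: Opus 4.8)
The plan is to prove a Lipschitz-type estimate with respect to the $p$-adic valuation, which is much stronger than mere uniform continuity. Concretely, I would show that for all $x,y\in{\bf Z}_p$ with $\nu_p(x-y)\geq 1$ one has $\nu_p\bigl(g_{p,q}(x)-g_{p,q}(y)\bigr)\geq \nu_p(x-y)-1$, that is $|g_{p,q}(x)-g_{p,q}(y)|_p\leq p\,|x-y|_p$ on the set of pairs lying within distance $p^{-1}$. Uniform continuity then follows immediately: to force $\nu_p\bigl(g_{p,q}(x)-g_{p,q}(y)\bigr)\geq k$ it suffices to take $\delta=p^{-(k+1)}$, since $|x-y|_p\leq\delta$ gives $\nu_p(x-y)\geq k+1\geq 1$.

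First I would record that $g_{p,q}$ is well defined as a map ${\bf Z}_p\to{\bf Z}_p$. On the branch $p\mid n$ this is clear. On the other branch the point is that $\epsilon_0(-qn)\equiv -qn\pmod p$ by definition of $\epsilon_0$, so that $qn+\epsilon_0(-qn)\equiv 0\pmod p$ and the quotient by $p$ indeed lies in ${\bf Z}_p$.

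The key observation is that the apparent difficulty of the piecewise definition evaporates for points that are $p$-adically close. Suppose $\nu_p(x-y)\geq 1$, i.e. $x\equiv y\pmod p$. Then $x$ and $y$ share the same residue mod $p$; in particular $p\mid x\iff p\mid y$, so $x$ and $y$ fall in the \emph{same} branch of the definition, and nearby points never straddle the cut $\{p\mid n\}$. Moreover, since $\epsilon_0(u)$ depends only on $u\bmod p$ and $q(x-y)\equiv 0\pmod p$, we obtain $\epsilon_0(-qx)=\epsilon_0(-qy)$. This cancellation of the correction terms is the heart of the argument.

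It then remains to compute the difference in each branch. If $p\mid x$ (hence $p\mid y$), then $g_{p,q}(x)-g_{p,q}(y)=(x-y)/p$, so $\nu_p\bigl(g_{p,q}(x)-g_{p,q}(y)\bigr)=\nu_p(x-y)-1$. If $p\nmid x$ (hence $p\nmid y$), then using the cancellation above
\[
g_{p,q}(x)-g_{p,q}(y)=\frac{q(x-y)+\epsilon_0(-qx)-\epsilon_0(-qy)}{p}=\frac{q(x-y)}{p},
\]
whence $\nu_p\bigl(g_{p,q}(x)-g_{p,q}(y)\bigr)=\nu_p(q)+\nu_p(x-y)-1\geq \nu_p(x-y)-1$, because $\nu_p(q)\geq 0$ for the nonzero integer $q$. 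In both cases the announced estimate holds, and uniform continuity follows. The only genuinely delicate point is the cancellation of the $\epsilon_0$ terms; once one notes that $\epsilon_0$ is constant on residue classes mod $p$ and that two points within distance $p^{-1}$ always stay in a single branch, the remainder reduces to the one-line valuation computation above.
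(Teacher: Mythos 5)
Your proof is correct and follows essentially the same route as the paper: both establish the Lipschitz-type bound $|g_{p,q}(x)-g_{p,q}(y)|_p\leq p\,|x-y|_p$ for $|x-y|_p<1$ by observing that $p$-adically close points fall in the same branch and that the $\epsilon_0$ correction terms cancel. Your version is in fact slightly more careful than the paper's, which asserts equality $|g_{p,q}(x)-g_{p,q}(y)|_p=p|x-y|_p$ (valid only when $p\nmid q$), whereas your inequality via $\nu_p(q)\geq 0$ covers all nonzero integers $q$.
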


En effet  on a 
$$\forall x,y\in{\bf Z}_p,|x-y|_p<1\Rightarrow|g_{p,q}(x)-g_{p,q}(y)|_p=p|x-y|_p$$
et
 $$\forall x,y\in{\bf Z}_p,|x-y|_p=1
\Rightarrow
|g_{p,q}(x)-g_{p,q}(y)|_p\leq|x-y|_p.$$
donc $g_{p,q}$ est lipschitzienne.

Dans la suite on notera $\chi_p$ la fonction caractéristique de  
${\bf Z}_p\backslash p{\bf Z}_p$. Alors  la
quantité $r_k$  définie par 
$$r_0=0,\forall k\in{\bf N}\backslash \{0\}, r_k=\sum_{i=0}^{k} \chi_p(g_{p,q}^i(v)),$$
compte le nombre de termes $u_i$
premiers avec $p$, d'indice inférieur où égal à $k$. 

 \begin{prop}
 Soit $u$ un élément de ${\bf Z}_p$, et
  $u_n=g_{p,q}^n(u)$. La suite $(u_n)_{n\in{\bf N}}$ vérifie la
  relation de récurrence suivante :
 $$\label{recurrence}\forall n\in{\bf N},
 pu_{n+1}=q^{\chi_p(u_n)}u_n+\epsilon_0(-qu_n).$$
En particulier on obtient les relations suivantes dans ${\bf Z}_p$ :
\begin{equation}
\label{formule1}\forall n\in{\bf N},
u+\sum_{i=0}^n
\frac{\epsilon_0(-qu_i)p^i}{q^{r_i}}=_p\frac{u_{n+1}p^{n+1}}{q^{r_n}}.
\end{equation}
et par passage à la limite
\begin{equation}\label{formule2}\forall
	n\in{\bf N}, u+\sum_{i\in{\bf N}}
	\frac{\epsilon_0(-qu_i)p^i}{q^{r_i}}=_p0.
\end{equation}
\end{prop}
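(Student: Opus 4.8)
Le plan est d'\'etablir successivement les trois assertions, la relation de r\'ecurrence servant de base aux deux autres. Je v\'erifierais d'abord cette relation par disjonction de cas \`a partir de la d\'efinition de $g_{p,q}$. Si $p\mid u_n$, alors $\chi_p(u_n)=0$ et, comme $-qu_n\equiv 0\pmod p$, on a $\epsilon_0(-qu_n)=0$ ; le membre de droite vaut donc $u_n$ et la relation se r\'eduit \`a $pu_{n+1}=u_n$, c'est-\`a-dire $u_{n+1}=u_n/p=g_{p,q}(u_n)$. Si $p\nmid u_n$, alors $\chi_p(u_n)=1$ et la relation s'\'ecrit $pu_{n+1}=qu_n+\epsilon_0(-qu_n)$, soit exactement $u_{n+1}=g_{p,q}(u_n)$. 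Dans les deux cas la division par $p$ est l\'egitime, car la congruence $qu_n+\epsilon_0(-qu_n)\equiv 0\pmod p$ garantit que $u_{n+1}\in{\bf Z}_p$.

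Je d\'emontrerais ensuite la formule~(\ref{formule1}) par r\'ecurrence sur $n$. Le cas $n=0$ n'est qu'une r\'e\'ecriture de la relation pr\'ec\'edente. Pour l'h\'er\'edit\'e, en ajoutant le terme $\epsilon_0(-qu_n)p^n/q^{r_n}$ aux deux membres de l'\'egalit\'e au rang $n-1$, il suffit de constater que
\[
\frac{p^nu_n}{q^{r_{n-1}}}+\frac{\epsilon_0(-qu_n)p^n}{q^{r_n}}
=\frac{p^n}{q^{r_n}}\bigl(q^{\chi_p(u_n)}u_n+\epsilon_0(-qu_n)\bigr)
=\frac{p^{n+1}u_{n+1}}{q^{r_n}},
\]
o\`u l'on a utilis\'e l'identit\'e $r_n-r_{n-1}=\chi_p(u_n)$ (qui traduit que $r_n$ d\'enombre les indices $i\leq n$ tels que $p\nmid u_i$), puis la relation de r\'ecurrence. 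Le point d\'elicat, et la seule v\'eritable source d'erreur, est pr\'ecis\'ement ce suivi des exposants de $q$ : c'est l'identit\'e $q^{\chi_p(u_n)}/q^{r_n}=1/q^{r_{n-1}}$ qui assure le t\'elescopage, et il convient de fixer la convention $r_{-1}=0$ pour que le cas initial soit coh\'erent.

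J'obtiendrais enfin la formule~(\ref{formule2}) par passage \`a la limite dans~(\ref{formule1}). Comme $1<q<p$ avec $p$ premier, on a $p\nmid q$, donc $|q|_p=1$ ; et comme $u_{n+1}\in{\bf Z}_p$, on a $|u_{n+1}|_p\leq 1$. Le terme g\'en\'eral de la s\'erie v\'erifie alors $\bigl|\epsilon_0(-qu_i)p^i/q^{r_i}\bigr|_p\leq p^{-i}$, qui tend vers $0$, de sorte que la s\'erie converge dans ${\bf Q}_p$ d'apr\`es le crit\`ere de convergence rappel\'e dans la premi\`ere partie. De m\^eme le second membre v\'erifie $\bigl|p^{n+1}u_{n+1}/q^{r_n}\bigr|_p\leq p^{-(n+1)}$, qui tend vers $0$. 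En faisant tendre $n$ vers l'infini dans~(\ref{formule1}), ce second membre s'annule et l'on obtient l'\'egalit\'e $u+\sum_{i\in{\bf N}}\epsilon_0(-qu_i)p^i/q^{r_i}=_p 0$ annonc\'ee.
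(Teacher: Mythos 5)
Votre d\'emonstration est correcte et constitue pr\'ecis\'ement l'argument que l'article laisse implicite : la proposition y est \'enonc\'ee sans aucune preuve, les trois \'etapes que vous d\'etaillez (disjonction de cas sur la divisibilit\'e de $u_n$ par $p$, t\'elescopage par r\'ecurrence via l'identit\'e $r_n-r_{n-1}=\chi_p(u_n)$, passage \`a la limite par le crit\`ere ultram\'etrique de convergence rappel\'e en premi\`ere partie) \'etant trait\'ees comme imm\'ediates. Deux remarques seulement. D'abord, pour justifier $|q|_p=1$ vous invoquez l'hypoth\`ese $1<q<p$ : elle ne figure pas dans l'\'enonc\'e et elle est fausse dans le cas embl\'ematique $(p,q)=(2,3)$ ; le seul fait n\'ecessaire est $p\nmid q$, hypoth\`ese implicite de tout l'article (sans elle les d\'enominateurs $q^{r_i}$ feraient sortir de ${\bf Z}_p$), et votre majoration $\bigl|\epsilon_0(-qu_i)p^i/q^{r_i}\bigr|_p\leq p^{-i}$ vaut alors telle quelle. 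Ensuite, vous avez raison d'adopter la convention $r_n=\mathrm{Card}\{0\leq i\leq n,\ p\nmid u_i\}$, donc $r_0=\chi_p(u_0)$ et $r_{-1}=0$ : c'est la seule qui rende la formule vraie d\`es $n=0$, et elle corrige tacitement la clause $r_0=0$ de la d\'efinition donn\'ee dans l'article, laquelle est incompatible avec la formule d\`es que $p\nmid u$ ; votre suivi des exposants de $q$ est exactement le point cl\'e du t\'elescopage.
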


\begin{prop}
	L'application $\phi_{p,q}:{\bf Z}_p\mapsto {\bf Z}_p$ 
	définie par 
		$$\forall u\in{\bf Z}_p, 
		\phi_{p,q}(u)=_p\sum_{n=0}^\infty \epsilon_0(-qu_n)p^n$$ 
	est  une isométrie de ${\bf Z}_p$.
\end{prop}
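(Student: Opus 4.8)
The plan is to factor $\phi_{p,q}$ through the coordinate isometry $\theta$ introduced after the Hensel development, and then show that the underlying digit map is itself an isometry onto the sequence space. Write $u_n=g_{p,q}^n(u)$ and set $a_n(u)=\epsilon_0(-qu_n)$, so that by definition the Hensel digits of $\phi_{p,q}(u)$ are exactly the $a_n(u)\in\{0,\dots,p-1\}$; that is, $\phi_{p,q}=\theta\circ\Phi$ with $\Phi(u)=(a_n(u))_{n\in{\bf N}}$. Since $\theta$ is an isometry, it suffices to prove that $\Phi$ is an isometry from $({\bf Z}_p,|\cdot|_p)$ onto $(\{0,\dots,p-1\}^{\bf N},d)$, i.e. that for all $u,v$
\[
h(\Phi(u),\Phi(v))=\nu_p(u-v),
\]
where $h$ denotes the first index at which two sequences differ.

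The heart of the argument is a single one-step lemma. First, $a_0(u)=\epsilon_0(-qu)\equiv -qu \pmod p$, and $\epsilon_0$ is injective on residues modulo $p$; since $1<q<p$ with $p$ prime forces $p\nmid q$, we obtain $a_0(u)=a_0(v)\iff u\equiv v \pmod p$. Second, assume $u\equiv v \pmod p$. Then $\chi_p(u)=\chi_p(v)$ (divisibility by $p$ only sees the residue) and $a_0(u)=a_0(v)$, so subtracting the recurrence relation $p\,g_{p,q}(u)=q^{\chi_p(u)}u+\epsilon_0(-qu)$ from its analogue for $v$ cancels the $\epsilon_0$-terms and yields
\[
p\bigl(g_{p,q}(u)-g_{p,q}(v)\bigr)=q^{\chi_p(u)}(u-v).
\]
Taking $\nu_p$ of both sides and using $\nu_p(q)=0$ gives the key identity $\nu_p\bigl(g_{p,q}(u)-g_{p,q}(v)\bigr)=\nu_p(u-v)-1$.

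With this the conclusion follows by induction. If $u=v$ both distances vanish; otherwise set $m=\nu_p(u-v)\in{\bf N}$. As long as $\nu_p(u_i-v_i)\geq 1$ we have $u_i\equiv v_i\pmod p$, hence $a_i(u)=a_i(v)$, and the key identity gives $\nu_p(u_{i+1}-v_{i+1})=\nu_p(u_i-v_i)-1$. Starting from $\nu_p(u_0-v_0)=m$ this yields $\nu_p(u_i-v_i)=m-i$ for $0\leq i\leq m$; thus $a_i(u)=a_i(v)$ for every $i<m$, while $\nu_p(u_m-v_m)=0$ forces $u_m\not\equiv v_m\pmod p$ and therefore $a_m(u)\neq a_m(v)$. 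Hence $h(\Phi(u),\Phi(v))=m=\nu_p(u-v)$, so that $|\phi_{p,q}(u)-\phi_{p,q}(v)|_p=|u-v|_p$ and $\phi_{p,q}$ is distance-preserving. Since ${\bf Z}_p$ is compact, a distance-preserving self-map is automatically surjective, so $\phi_{p,q}$ is a genuine isometry of ${\bf Z}_p$. The one delicate point is the one-step identity: everything hinges on $p\nmid q$, which makes both $\chi_p$ and the digit $\epsilon_0(-q\,\cdot)$ depend only on the residue modulo $p$ and guarantees $\nu_p(q)=0$, and the uniform form of the recurrence is exactly what lets the two branches of $g_{p,q}$ be handled simultaneously. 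The remaining steps are routine bookkeeping.
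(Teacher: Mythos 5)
Your proof is correct. For the distance-preserving half you are essentially doing what the paper does, only packaged differently: the paper writes the closed form $g_{p,q}^k(u)=g_{p,q}^k(v)+p^{n-k}q^{r_{k-1}}w$ for $u=v+p^nw$, $|w|_p=1$, $k\leq n$, which is exactly your one-step identity $p\bigl(g_{p,q}(u)-g_{p,q}(v)\bigr)=q^{\chi_p(u)}(u-v)$ iterated; both arguments conclude that the Hensel digits of $\phi_{p,q}(u)$ and $\phi_{p,q}(v)$ agree precisely up to index $\nu_p(u-v)$, and both rest on the hypothesis $p\nmid q$, which you rightly make explicit while the paper leaves it implicit. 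Where you genuinely diverge is surjectivity: the paper constructs the inverse explicitly, reading off from its limit formula $u+\sum_{i}\epsilon_0(-qu_i)p^i/q^{r_i}=_p0$ that $\phi_{p,q}^{-1}\bigl(\sum_i a_ip^i\bigr)=_p-\sum_i a_ip^i/q^{r_i}$, whereas you invoke the general fact that a distance-preserving self-map of a compact metric space is automatically onto (valid: if $y$ were outside the closed image, its forward orbit would have all mutual distances bounded below, contradicting compactness). Your route is shorter and avoids verifying that the candidate series is a two-sided inverse, but it is non-constructive, and the explicit inverse is not a throwaway in this paper: it is what later yields the values $\phi_{p,q}^{-1}(-1)$ and $\phi_{2,3}^{-1}(-1/3)$ and underlies the key inclusion $\phi_{p,q}^{-1}({\bf Q}\cap{\bf Z}_p)\subset{\bf Q}\cap{\bf Z}_p$. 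So your proof fully establishes the proposition as stated, while the paper's argument buys more for the sections that follow.
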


En effet un calcul immédiat donne, si
$u=v+p^nw$ et $|w|_p=1$, pour
tout entier $k$ inférieur ou égal à $n$,
$g_{p,q}^k(u)=g_{p,q}^k(v)+p^{n-k}q^{r_{k-1}}w$.

En particulier le premier indice pour lequel $u_k$ et $v_k$ 
ne sont pas dans la même classe modulo $p$  est $n$, d'où
$|\phi_{p,q}(u)-\phi_{p,q}(v)|_p=|u-v|_p$.

 D'après la proposition précédente, et la formule~(\ref{formule2}),
 $\phi_{p,q}$ est  surjective, d'application réciproque définie par :

 $$\forall v\in {\bf Z}_p, 
 		v=_p\sum_{i=0}^\infty a_i p^i,\qquad 
		\phi_{p,q}^{-1}(v)=_p
		-\sum_{i=0}^\infty\frac{a_ip^i}{q^{r_i}}.$$
où $v=_p\sum_{i} a_i p^i$ est le développement de Hensel de $v$.

\bigskip

Notons $\delta_p$ l'application $\frac{1}{p}(Id-\epsilon_0)$. C'est une application continue qui    permet de tester la périodicité d'un développement de Hensel :

\begin{prop} 
	Pour tout développement de Hensel définie par une suite $a:{\bf N}\mapsto \{0,1\}$, on a l'égalité suivante :
$$ \delta_p(\sum_{i=0}^\infty a_ip^i)=_p\sum_{i=0}^\infty a_{i+1}p^i.$$
\end{prop}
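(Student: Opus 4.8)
Le plan est de reconna\^itre que l'expression $\sum_{i=0}^\infty a_ip^i$ n'est autre que le d\'eveloppement de Hensel de l'\'el\'ement $u$ de ${\bf Z}_p$ qu'elle d\'efinit, les chiffres $a_i$ \'etant dans $\{0,1\}\subset\{0,\dots,p-1\}$. Par unicit\'e du d\'eveloppement de Hensel, le chiffre de poids faible est caract\'eris\'e par la congruence $u\equiv a_0\mod p$ ; comme $a_ip^i\in p{\bf Z}_p$ pour tout $i\geq 1$, on a bien $\epsilon_0(u)=_pa_0$ par d\'efinition de $\epsilon_0$.

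Je calculerais alors $\delta_p(u)$ directement \`a partir de sa d\'efinition $\delta_p=\frac{1}{p}(Id-\epsilon_0)$ :
$$\delta_p\Bigl(\sum_{i=0}^\infty a_ip^i\Bigr)=_p\frac{1}{p}\Bigl(\sum_{i=0}^\infty a_ip^i-a_0\Bigr)=_p\frac{1}{p}\sum_{i=1}^\infty a_ip^i.$$
En mettant $p$ en facteur dans la somme restante puis en r\'eindexant par $j=i-1$, il vient imm\'ediatement $\frac{1}{p}\sum_{i=1}^\infty a_ip^i=_p\sum_{j=0}^\infty a_{j+1}p^j$, ce qui est pr\'ecis\'ement l'\'egalit\'e annonc\'ee.

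La seule pr\'ecaution consiste \`a justifier que le retrait de $a_0$ et la division par $p$ commutent avec la sommation de la s\'erie : ceci r\'esulte de la convergence $p$-adique (le terme g\'en\'eral $a_ip^i$ tend vers $0$) et de la continuit\'e de $\delta_p$ d\'ej\`a signal\'ee avant l'\'enonc\'e. Il n'y a donc pas de v\'eritable obstacle, l'argument se ramenant \`a l'identit\'e $\epsilon_0(u)=_pa_0$ jointe \`a une manipulation de s\'erie convergente.
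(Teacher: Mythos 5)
Votre preuve est correcte : l'identit\'e $\epsilon_0(u)=a_0$ (les termes d'indice $i\geq 1$ \'etant dans $p{\bf Z}_p$) puis la division par $p$ avec r\'eindexation donnent exactement l'\'egalit\'e annonc\'ee, et la justification par convergence $p$-adique du terme g\'en\'eral est la bonne. Le papier ne d\'etaille pas cette proposition (il la tient pour imm\'ediate \`a partir de la d\'efinition de $\delta_p$ et du d\'eveloppement de Hensel), et votre argument est pr\'ecis\'ement la v\'erification directe sous-entendue.
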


\noindent
L'application $\delta_p$ permet donc de  tester la rationalité  d'un
élément de ${\bf Z}_p$, puisque d'après les
propriétés du développement de Hensel, 
$$\forall v\in {\bf Z}_p, v\in {\bf Q}\Leftrightarrow \exists k\in{\bf N}^*,\exists n_0,\forall n\geq n_0, \delta_p^{n+k}(v)=_p\delta_p^n(v).$$
Un entier $k$ vérifiant la propriété précédente est appelé une période
du développement.

\bigskip

La définition de $\phi_{p,q}$ permet d'obtenir sans difficulté la relation suivante :
\begin{prop}
On a $\phi_{p,q}\circ g_{p,q}=\delta_p\circ \phi_{p,q}$. 
\end{prop}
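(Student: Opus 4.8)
Le plan est de reconna\^itre la s\'erie d\'efinissant $\phi_{p,q}(u)$ comme le d\'eveloppement de Hensel de cet \'el\'ement, puis d'exploiter le fait que $\delta_p$ agit comme l'op\'erateur de d\'ecalage sur les d\'eveloppements de Hensel, et enfin que pr\'ecomposer par $g_{p,q}$ revient \`a d\'ecaler d'un cran l'orbite $(u_n)$. Aucune des formules~(\ref{formule1}) ou~(\ref{formule2}) n'intervient : l'identit\'e voulue est de nature purement combinatoire (un simple changement d'indice).

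Concr\`etement, je fixerais $u\in{\bf Z}_p$ et poserais $u_n=g_{p,q}^n(u)$ ainsi que $a_n=\epsilon_0(-qu_n)$. Comme $\epsilon_0$ est \`a valeurs dans $\{0,\dots,p-1\}$, chaque $a_n$ est un chiffre de Hensel valide, de sorte que l'\'ecriture $\phi_{p,q}(u)=_p\sum_{n\geq 0}a_np^n$ \emph{est} le d\'eveloppement de Hensel de $\phi_{p,q}(u)$. J'appliquerais alors la proposition pr\'ec\'edente sur $\delta_p$ (valable pour tout d\'eveloppement de Hensel, le cas des chiffres dans $\{0,\dots,p-1\}$ se d\'eduisant aussit\^ot de la d\'efinition $\delta_p=\frac{1}{p}(\mathrm{Id}-\epsilon_0)$, puisque $\epsilon_0$ extrait pr\'ecis\'ement le chiffre $a_0$) pour obtenir $\delta_p(\phi_{p,q}(u))=_p\sum_{n\geq 0}a_{n+1}p^n=_p\sum_{n\geq 0}\epsilon_0(-qu_{n+1})p^n$.

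Pour le membre de gauche, il suffit d'observer que les it\'er\'ees de $g_{p,q}$ partant de $g_{p,q}(u)$ sont $g_{p,q}^n(g_{p,q}(u))=g_{p,q}^{n+1}(u)=u_{n+1}$ ; la d\'efinition de $\phi_{p,q}$ appliqu\'ee au point $g_{p,q}(u)$ donne donc directement $\phi_{p,q}(g_{p,q}(u))=_p\sum_{n\geq 0}\epsilon_0(-qu_{n+1})p^n$. Les deux expressions co\"incident, ce qui \'etablit l'\'egalit\'e $\phi_{p,q}\circ g_{p,q}=\delta_p\circ\phi_{p,q}$.

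Le seul point demandant un minimum de soin est de v\'erifier que les $a_n$ sont bien les chiffres de Hensel de $\phi_{p,q}(u)$, et non de simples coefficients d'une s\'erie formelle, afin d'invoquer la description de $\delta_p$ comme d\'ecalage sans aucune retouche ; cela tient imm\'ediatement \`a ce que $\epsilon_0$ prend ses valeurs dans $\{0,\dots,p-1\}$. Hormis ce contr\^ole et le changement d'indice sur l'orbite, je ne pr\'evois pas de difficult\'e r\'eelle.
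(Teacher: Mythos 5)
Votre preuve est correcte et correspond exactement \`a ce que le papier laisse au lecteur (il affirme seulement que la relation s'obtient \og sans difficult\'e \fg\ \`a partir de la d\'efinition de $\phi_{p,q}$) : vous explicitez le changement d'indice sur l'orbite, $\phi_{p,q}(g_{p,q}(u))=_p\sum_{n\geq0}\epsilon_0(-qu_{n+1})p^n$, et le fait que $\delta_p$ d\'ecale le d\'eveloppement de Hensel, dont les chiffres sont bien les $\epsilon_0(-qu_n)\in\{0,\dots,p-1\}$. Le soin que vous apportez \`a v\'erifier que ces coefficients sont de v\'eritables chiffres de Hensel (ce qui l\'egitime l'emploi de la proposition sur $\delta_p$) est pr\'ecis\'ement le point implicite dans le texte.
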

Cette  égalité s'itère de façon naturelle en :
$$\forall n\in{\bf N}, \phi_{p,q}\circ g_{p,q}^n=\delta_p^n\circ \phi_{p,q}.$$

On en déduit immédiatement le corollaire :

\begin{coro}
$$\phi_{p,q}^{-1}({\bf Q}\cap{\bf Z}_p)=\{u\in{\bf Z}_p, \exists n_0,\forall n\geq n_0,\exists k\in{\bf N}, g_{p,q}^{n+k}(u)=g_{p,q}^n(u)\}.$$
\end{coro}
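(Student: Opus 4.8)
Le plan consiste à transporter, le long de l'isométrie $\phi_{p,q}$, le critère de rationalité fondé sur $\delta_p$. Posons $v=_p\phi_{p,q}(u)$ ; par définition de l'image réciproque, l'appartenance $u\in\phi_{p,q}^{-1}({\bf Q}\cap{\bf Z}_p)$ équivaut à $v\in{\bf Q}\cap{\bf Z}_p$. Le critère rappelé plus haut traduit cette dernière condition par l'existence d'une période : il existe $k\in{\bf N}^*$ et un rang $n_0$ tels que $\delta_p^{n+k}(v)=_p\delta_p^n(v)$ pour tout $n\geq n_0$.

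J'utiliserais ensuite la relation d'entrelacement $\phi_{p,q}\circ g_{p,q}^n=\delta_p^n\circ\phi_{p,q}$ établie à la proposition précédente, qui fournit $\delta_p^n(v)=_p\phi_{p,q}(g_{p,q}^n(u))$ et, de même, $\delta_p^{n+k}(v)=_p\phi_{p,q}(g_{p,q}^{n+k}(u))$. La condition de périodicité sur $v$ s'écrit alors
$$\phi_{p,q}\bigl(g_{p,q}^{n+k}(u)\bigr)=_p\phi_{p,q}\bigl(g_{p,q}^n(u)\bigr).$$
Comme $\phi_{p,q}$ est une isométrie, elle préserve l'annulation de la distance : $\phi_{p,q}(x)=_p\phi_{p,q}(y)$ si et seulement si $x=y$. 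On supprime donc $\phi_{p,q}$ des deux membres et la condition devient $g_{p,q}^{n+k}(u)=g_{p,q}^n(u)$, ce qui identifie $\phi_{p,q}^{-1}({\bf Q}\cap{\bf Z}_p)$ à l'ensemble des $u$ tels qu'il existe $k\in{\bf N}^*$ et $n_0$ avec $g_{p,q}^{n+k}(u)=g_{p,q}^n(u)$ pour tout $n\geq n_0$.

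Il resterait à réconcilier cette forme avec l'ordre des quantificateurs de l'énoncé, où $\exists k$ figure à l'intérieur de $\forall n$ (et où l'on doit bien entendu imposer $k\geq 1$, le choix $k=0$ rendant la condition triviale). L'implication vers l'énoncé est immédiate en conservant le même $k$. Pour la réciproque, je montrerais que la condition au seul rang $n_0$ suffit : si $g_{p,q}^{n_0+k}(u)=g_{p,q}^{n_0}(u)$, alors en appliquant $g_{p,q}^{j}$ on obtient $g_{p,q}^{(n_0+j)+k}(u)=g_{p,q}^{n_0+j}(u)$ pour tout $j\geq 0$, d'où une période uniforme $k$ valable pour tout $n\geq n_0$. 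Les deux formes coïncident donc.

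La difficulté attendue est uniquement ce point de logique sur les quantificateurs et la convention $k\geq 1$ ; une fois ce réglage fait, la chaîne d'équivalences repose entièrement sur la relation d'entrelacement et sur l'injectivité de l'isométrie, et reste purement formelle.
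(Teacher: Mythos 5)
Votre preuve est correcte et suit essentiellement la d\'emarche, laiss\'ee implicite, du papier : transporter le crit\`ere de p\'eriodicit\'e fond\'e sur $\delta_p$ au moyen de la relation d'entrelacement $\phi_{p,q}\circ g_{p,q}^n=\delta_p^n\circ\phi_{p,q}$, puis supprimer $\phi_{p,q}$ grâce \`a son injectivit\'e d'isom\'etrie. Votre mise au point sur l'ordre des quantificateurs et la convention $k\geq 1$ ne fait qu'expliciter un d\'etail que le papier passe sous silence en qualifiant le corollaire d'imm\'ediat.
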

Ainsi on a les valeurs particulières
\begin{align*}
\phi_{p,q}^{-1}(-1)=_
p-\sum_{n=0}^\infty\frac{(p-1)p^i}{q^{i+1}},\quad
\phi_{2,3}^{-1}(-\frac{1}{3})=_2-\sum_{n=0}^\infty \frac{2^{2i}}{3^{i+1}}
\end{align*}
soit encore 
\begin{align*}\phi_{p,q}^{-1}(-1)=_p\frac{1-p}{q-p}
,\quad\phi_{2,3}^{-1}(-\frac{1}{3})=_21.
\end{align*}

\begin{prop} On a l'inclusion suivante :
$$\phi_{p,q}^{-1}({\bf Q}\cap{\bf Z}_p)\subset {\bf Q}\cap{\bf Z}_p.$$
\end{prop}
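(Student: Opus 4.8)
The plan is to combine the characterization of $\phi_{p,q}^{-1}(\mathbf{Q}\cap\mathbf{Z}_p)$ furnished by the preceding corollary with the closed formula~(\ref{formule2}), and then to recognize the resulting $p$-adic sum as a rational combination of convergent geometric series. Let $u\in\phi_{p,q}^{-1}(\mathbf{Q}\cap\mathbf{Z}_p)$ and set $u_n=g_{p,q}^n(u)$. By the corollary the orbit $(u_n)$ is ultimately periodic, so there exist $n_0\in\mathbf{N}$ and a period $T\geq 1$ with $u_{n+T}=u_n$ for all $n\geq n_0$. Since $e_i=\epsilon_0(-qu_i)\in\{0,\dots,p-1\}$ and $c_i=\chi_p(u_i)\in\{0,1\}$ depend only on the class of $u_i$ modulo $p$, both sequences are then ultimately periodic of period $T$ from the index $n_0$ on.

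First I would record how the exponents $r_i=\sum_{j=0}^{i}c_j$ behave. Writing $R=\sum_{j=n_0}^{n_0+T-1}c_j$ for the number of indices in one period whose iterate is prime to $p$, periodicity of $(c_i)$ yields $r_{i+T}=r_i+R$ for every $i\geq n_0$. Starting from formula~(\ref{formule2}), namely $u=_p-\sum_{i\in\mathbf{N}}e_ip^i/q^{r_i}$, I would split off the finite head $\sum_{i<n_0}$, which is a rational number, and regroup the tail into blocks of length $T$: for $i=n_0+s+mT$ with $0\leq s<T$ and $m\geq 0$ one has $e_i=e_{n_0+s}$, $p^i=p^{n_0+s}(p^T)^m$ and $q^{r_i}=q^{r_{n_0+s}}(q^R)^m$, so the tail factors (using $p$-adic absolute convergence to swap the sums) as $\sum_{s=0}^{T-1}\frac{e_{n_0+s}p^{n_0+s}}{q^{r_{n_0+s}}}\sum_{m=0}^{\infty}\bigl(p^T/q^R\bigr)^m$.

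It then remains to evaluate the inner geometric series. As $p$ is prime and $\gcd(p,q)=1$ (which holds in the range $1<q<p$, and more generally whenever $q$ is prime to $p$), $q$ is a unit of $\mathbf{Z}_p$, whence $|p^T/q^R|_p=p^{-T}<1$ and the series converges $p$-adically to $q^R/(q^R-p^T)$; moreover $q^R-p^T\neq 0$, since $q^R$ is prime to $p$ while $p^T$ is a positive power of $p$, so this value is a well-defined nonzero rational. Substituting back produces an equality in $\mathbf{Q}_p$ exhibiting $u$ as a finite $\mathbf{Q}$-linear combination of rationals, hence identifying $u$ with an element of $\mathbf{Q}$; as $u\in\mathbf{Z}_p$ by hypothesis, this gives $u\in\mathbf{Q}\cap\mathbf{Z}_p$, the desired inclusion. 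I expect the only delicate point to be the bookkeeping of the exponents $r_i$, precisely the fact that each complete period multiplies the denominator by the fixed factor $q^R$, since this is exactly what turns the $p$-adic tail into a genuine geometric series; the convergence and rationality of that series are then immediate consequences of $q$ being a $p$-adic unit.
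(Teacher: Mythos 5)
Your proof is correct, but it follows a more direct route than the paper's. The paper deliberately postpones this proposition to its third section, because its proof there runs through the formal-series machinery: it first shows that $S_u(T)=\sum_n p^{\psi(n+1)}u_{\psi(n+1)}T^n$ (equivalently $F_{\psi_u}(T)=\sum_n a_{\psi(n)}p^{\psi(n)}T^n$) is a rational fraction if and only if $\phi_{p,q}(u)$ is rational, and then concludes by evaluating inside the $p$-adic disk of convergence (of radius at least $p$) to get $u=_p-\frac{1}{q}F_{\psi_u}(\frac1q)$, a rational value of a rational function with rational coefficients. You instead stay entirely within the material of the second section: the corollary $\phi_{p,q}^{-1}({\bf Q}\cap{\bf Z}_p)=\{u:\ (g_{p,q}^n(u))\ \text{ultimately periodic}\}$, the inversion formula~(\ref{formule2}), and an explicit block decomposition of the tail into periods of length $T$, which turns the series into a finite rational combination times the $p$-adically convergent geometric series $\sum_m(p^T/q^R)^m=q^R/(q^R-p^T)$. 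In substance this is the same computational core --- ultimately periodic Hensel digits force a geometric resummation with ratio a power of $p$ over a power of $q$ --- so your argument is essentially an inlining of the paper's two lemmas (rationality of $F_{\psi_u}$ as a power series, plus its evaluation at $1/q$); but your version is more elementary and self-contained, needing no generating functions, whereas the paper's detour pays off because the propositions on $S_u$ and $F_\psi$ are reused for the main theorem (the case $1<q<p$) and for the series reformulation of the Collatz conjecture. Two details you handle that deserve to stay explicit: the hypothesis $\gcd(p,q)=1$ (needed both for $|p^T/q^R|_p<1$ and for $q^R-p^T\neq0$), and the correct bookkeeping $r_{i+T}=r_i+R$, which is exactly what makes the tail geometric; your verification of the latter is the one genuinely non-trivial step, and it is right.
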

La démonstration de cette proposition, qui repose sur la combinatoire des séries formelles associées au problème, sera faite dans la troisième partie.

\begin{enonce*}{Question}[Problème de Collatz généralisée]
Soit $\phi_{p,q}$ l'isométrie de ${\bf Q}_p$ obtenue en prolongeant celle de ${\bf Z}_p$ par 
$$\forall u\in{\bf Z}_p,\forall n\in {\bf N},\phi(p^{-n}u)=p^{-n}\phi(u).$$
On peut alors poser  la  question  générale :
Pour quels couples $(p,q)$ a-t-on l'égalité $\phi_{p,q}({\bf Q})={\bf Q}$?
\end{enonce*}

\smallskip\noindent
Le résultat précédent donne  l'inclusion :
$$\phi_{p,q}^{-1}({\bf Q})\subset{\bf Q}.$$
La continuité de $\phi_{p,q}$ permet d'affirmer que l'ensemble des
rationnels pour lesquels la suite est
ultimement périodique est dense dans ${\bf
Q}_p$.

En particulier pour le couple $(2,3)$,
l'ensemble des
rationnels vérifiant la conjecture de Collatz
sont denses dans
${\bf Q}_2$.

Si $q<p$  la réponse est positive du fait que
la hauteur des éléments de la suite des
itérés reste bornée. 
Pour le cas $p<q$ des simulations numériques
montre que la réponse semble être
assez souvent négative.

\section{Combinatoire de la suite de Collatz}
Dans la suite on étudie le cas $(p,q)$ en utilisant les notations suivantes
:

\[\phi_{p,q}(u)=_p\sum_{n=0}^\infty a_np^n,\]
\[\forall n\in{\bf N}, g_{p,q}^{n}(u)=u_n,\,
r_n=\mathrm{Card}(\{0\leq i\leq n-1,
a_i=0\}\]
Et on introduit la fonction
$\psi_u(n)$ de ${\bf N}$ dans ${\bf N}\cup\{+\infty\}$ définie par :
\[\phi_{p,q}(u)=_p\sum_{n\in {\bf
N}}a_{\psi_u(n)}p^{\psi_u(n)}.\]
En particulier $\psi_u(n)$ est l'indice du
$n$-ième coefficient non nul dans le
développement de Hensel de
$\phi_{p,q}(u)$.
La fonction $\psi_u$ est une
fonction admissible de $\bf N$ dans
$\overline{\bf N}$.

On remarquera, que $+\infty$ est dans l'image de $\psi_u$ si et
seulement si la fonction la suite $r_n(u)$
est bornée, donc constante à partir d'un
certain rang. De plus, avec les notations introduites on obtient  :
\[\forall n\in{\bf N},\psi_u(n)\in{\bf
N}\Rightarrow  r_{\psi_u(n)}=n+1,\]
 \[p^{+\infty}=_p0\hbox{ et
 }\sum_{n=0}^\infty
a_np^n=_p\sum_{n=0}^\infty
 a_{\psi_u(n)}p^{\psi_u(n)}.\]
Pour $u$ fixé on notera $\psi=\psi_u$.

\begin{prop} Soit $u$ un élément de ${\bf Z}_2$, alors 
les séries entières
$$F_\psi(T)=\sum_{n=0}^\infty
a_{\psi(n)}p^{\psi(n)}T^n\hbox{ et
}G_u(T)=\sum_{n=0}^\infty
a_n\frac{p^{n}}{q^{r_n}}T^n$$
vérifie les égalités suivantes
:
\[\frac{u}{1-T}+\frac{1}{1-T}G_u(T)=\sum_{n=0}^\infty\frac{p^{n+1}u_{n+1}}{q^{r_n}}T^n.$$ 
$$\frac{qu}{1-qT}+\frac{1}{1-qT}F_\psi(T)=\sum_{n=0}^\infty
p^{\psi(n+1)}u_{\psi(n+1)}T^n,\]
\end{prop}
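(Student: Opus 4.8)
The plan is to prove the two identities separately: the first one is essentially a restatement of formula~(\ref{formule1}) after division by $1-T$, while the second one requires first compressing the iteration into a linear recurrence on the subsequence indexed by $\psi$. Throughout, recall that $a_n=\epsilon_0(-qu_n)$, so that $a_n=0$ precisely when $p\mid u_n$, i.e.\ when $u_{n+1}=u_n/p$.

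\emph{First identity.} With the above convention, formula~(\ref{formule1}) reads
$$u + \sum_{i=0}^n a_i\frac{p^i}{q^{r_i}} = \frac{u_{n+1}p^{n+1}}{q^{r_n}}, \qquad n\in{\bf N}.$$
The key observation is that the left-hand side is exactly the $n$-th partial sum of the coefficient sequence of $u+G_u(T)$. Since multiplication by $1/(1-T)$ sends a coefficient sequence to the sequence of its partial sums, the coefficient of $T^m$ in $\frac{u}{1-T}+\frac{1}{1-T}G_u(T)=\frac{1}{1-T}\bigl(u+G_u(T)\bigr)$ is $u+\sum_{n=0}^m a_np^n/q^{r_n}$, which by the displayed formula equals $p^{m+1}u_{m+1}/q^{r_m}$. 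This is precisely the coefficient of $T^m$ on the right-hand side, so the first identity holds as an identity of formal power series over ${\bf Q}$, with no convergence issue to address.

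\emph{Second identity.} First I would set up the compressed recurrence for the subsequence indexed by $\psi$. Between two consecutive active indices $\psi(n)$ and $\psi(n+1)$ the iteration performs one genuine step $p\,u_{\psi(n)+1}=q\,u_{\psi(n)}+a_{\psi(n)}$ (legitimate because $a_{\psi(n)}\neq0$ means $u_{\psi(n)}$ is coprime to $p$), followed by $\psi(n+1)-\psi(n)-1$ plain divisions by $p$ at the intervening zero indices. Telescoping these steps yields
$$p^{\psi(n+1)}u_{\psi(n+1)} = q\,p^{\psi(n)}u_{\psi(n)} + a_{\psi(n)}\,p^{\psi(n)}, \qquad n\in{\bf N},$$
while the divisions occurring before the first active index give the boundary value $p^{\psi(0)}u_{\psi(0)}=u$. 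Writing $b_n=p^{\psi(n)}u_{\psi(n)}$, this is the linear recurrence $b_{n+1}=qb_n+c_n$ whose inhomogeneous term $c_n=a_{\psi(n)}p^{\psi(n)}$ is the coefficient of $T^n$ in $F_\psi$. Translating into generating functions with $B(T)=\sum_n b_nT^n$: summing $b_{n+1}=qb_n+c_n$ against $T^{n+1}$ gives $B(T)-b_0=qT\,B(T)+T\,F_\psi(T)$, hence $B(T)=(b_0+T\,F_\psi(T))/(1-qT)$; summing instead against $T^n$ shows the right-hand side of the claim equals $\sum_n b_{n+1}T^n=qB(T)+F_\psi(T)$. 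Substituting the expression for $B(T)$ and using $b_0=u$ collapses this to $(qu+F_\psi(T))/(1-qT)=\frac{qu}{1-qT}+\frac{1}{1-qT}F_\psi(T)$, which is the left-hand side.

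The main obstacle is the bookkeeping in the compressed recurrence: one must check that the powers of $p$ from the $\psi(n+1)-\psi(n)-1$ divisions combine with the single $q$-step to leave exactly the factor $p^{\psi(n)}$ in front of $q\,u_{\psi(n)}+a_{\psi(n)}$, including the degenerate case $\psi(n+1)=\psi(n)+1$, and that the initial segment before $\psi(0)$ is treated so that $b_0=u$ rather than $u_{\psi(0)}$. Once this telescoping is correctly established, both identities reduce to the formal manipulations above.
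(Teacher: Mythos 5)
Your proposal is correct and follows essentially the same route as the paper: the first identity is formule~(\ref{formule1}) read off coefficient by coefficient after multiplying by $\frac{1}{1-T}$, and your compressed recurrence $p^{\psi(n+1)}u_{\psi(n+1)}=q\,p^{\psi(n)}u_{\psi(n)}+a_{\psi(n)}p^{\psi(n)}$ with $p^{\psi(0)}u_{\psi(0)}=u$ is exactly the one-step form of the paper's equation~(\ref{exp}), which is its telescoped (solved) version. The only difference is cosmetic: you sum the one-step recurrence against $T^n$ via the auxiliary series $B(T)$, whereas the paper multiplies~(\ref{exp}) by $q^{n+1}$ and recognizes the Cauchy product $\frac{1}{1-qT}F_\psi(T)$ directly.
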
 
La première provient de la proposition 10. 

La deuxième provient des égalités :
\begin{equation}
\label{exp}
\forall n\in{\bf N},
u+\sum_{i=0}^na_{\psi(i)}\frac{p^{\psi(i)}}{q^{i+1}}=_p\frac{p^{\psi(n+1)}u_{\psi(n+1)}}{q^{n+1}}.
\end{equation}
 et  d'obtenir l'égalité des séries entières. 

%

\begin{prop} Soit $u$ un élément de ${\bf
	Z}_p$, et $\psi_u$ la fonction associée.
La série entière
$$S_u(T)=_2\sum_{n=0}^\infty p
^{{\psi}(n+1)}u_{\psi(n+1)}T^n.$$ est une
fraction rationnelle
si et seulement si $\phi_{p,q}(u)$ est un rationnel.
\end{prop}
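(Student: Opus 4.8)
Le plan est de ramener la rationalité de $S_u$ à celle de $F_\psi$ grâce à l'identité de la proposition précédente
\[
S_u(T)=\frac{qu+F_\psi(T)}{1-qT},
\]
puis d'évaluer $F_\psi$ en $T=1$ de façon $p$-adique. Comme $1-qT$ est inversible dans le corps des fractions rationnelles et que $qu$ est une constante, on obtient immédiatement que $S_u$ est rationnelle si et seulement si $F_\psi$ l'est. On est donc réduit à montrer que $F_\psi$ est rationnelle si et seulement si $\phi_{p,q}(u)$ est rationnel.

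Pour l'implication directe, je supposerais $F_\psi=P/Q$ rationnelle, en écartant d'abord le cas trivial où $\psi$ est à domaine fini (alors $F_\psi$ est un polynôme et $\phi_{p,q}(u)=\sum_{n\leq N}a_np^n$ est un développement fini, donc rationnel). Dans le cas restant, les coefficients $c_n=a_{\psi(n)}p^{\psi(n)}$ étant entiers, la suite $(c_n)$ vérifie une récurrence linéaire à coefficients dans ${\bf Q}_p$, donc, par descente du corps de définition (l'existence d'une telle récurrence est régie par des équations linéaires sur ${\bf Q}$ portant sur les $c_n\in{\bf Q}$), une récurrence à coefficients rationnels ; on peut ainsi prendre $P,Q\in{\bf Q}[T]$ premiers entre eux. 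Comme $\psi(n)\to+\infty$, on a $c_n\to 0$ dans ${\bf Q}_p$, de sorte que la série $\sum_n c_n$ converge et vaut $_p\,\phi_{p,q}(u)$ par définition de $\phi_{p,q}$. Partant de l'identité formelle $Q(T)F_\psi(T)=P(T)$, le caractère ultramétrique et la convergence ($c_n\to 0$) autorisent le réarrangement qui donne $Q(1)\,\phi_{p,q}(u)=P(1)$ dans ${\bf Q}_p$. La fraction étant réduite, $Q(1)\neq 0$, d'où $\phi_{p,q}(u)=P(1)/Q(1)\in{\bf Q}$.

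Pour la réciproque, je partirais du critère de la première partie : $\phi_{p,q}(u)\in{\bf Q}\cap{\bf Z}_p$ équivaut à l'ultime périodicité de la suite des chiffres de Hensel $(a_n)$. Si cette suite est nulle à partir d'un rang, $\psi$ est à domaine fini et $F_\psi$ est un polynôme. Sinon, en notant $d$ une période et $m\geq 1$ le nombre de chiffres non nuls par période, un décompte direct sur $\psi$ fournit $\psi(n+m)=\psi(n)+d$ et $a_{\psi(n+m)}=a_{\psi(n)}$ pour $n$ assez grand, d'où la relation $c_{n+m}=p^d c_n$. Cette récurrence à pas $m$ montre que $(1-p^dT^m)\sum_{n\geq n_0}c_nT^n$ est un polynôme, donc que $F_\psi$, et par suite $S_u$, est rationnelle.

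Le point délicat est l'implication directe, et plus précisément la justification que la somme $p$-adique de la série en $T=1$ coïncide avec la valeur $P(1)/Q(1)$ de la fraction rationnelle : c'est l'interversion entre l'identité formelle $QF_\psi=P$ et la sommation $p$-adique, licite grâce à $c_n\to 0$, conjuguée à la descente de la rationalité de ${\bf Q}_p$ à ${\bf Q}$ qui assure $P(1)/Q(1)\in{\bf Q}$ et non seulement dans ${\bf Q}_p$. C'est cette astuce d'évaluation en $T=1$ qui permet d'éviter l'argument, nettement plus coûteux, reconstituant directement la périodicité des chiffres de Hensel à partir de la récurrence linéaire vérifiée par les $c_n$.
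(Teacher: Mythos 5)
Your proof is correct and takes essentially the same route as the paper: reduce $S_u$ to $F_\psi$ via the transfer identity, descend the rationality of $F_\psi$ from ${\bf Q}_p(T)$ to ${\bf Q}(T)$ using that its coefficients are rational, evaluate $p$-adically at $T=1$ to get $\phi_{p,q}(u)=_pF_\psi(1)\in{\bf Q}$, and use ultimate periodicity of the Hensel digits for the converse. The only slip is your justification of $Q(1)\neq 0$: reducedness alone does not exclude a pole at $T=1$, but your own identity $Q(1)\,\phi_{p,q}(u)=P(1)$ repairs the step, since $Q(1)=0$ would force $P(1)=0$, contradicting the coprimality of $P$ and $Q$ (a point the paper's proof glosses over entirely when it writes $\phi_{p,q}(u)=_pF_\psi(1)$).
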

En effet la deuxième égalité de la
proposition précédente, permet d'affirmer que
si $S_u$ est une fraction rationnelle, alors
$F_\psi$ en est aussi une, et ses
coefficients sont dans $\bf Q$, puis
$\phi_{p,q}(u)=_pF_\psi(1)$
est un rationnel, donc son développement de
Hensel est périodique, et enfin
$u=-\frac{1}{q}F_\psi(\frac{1}{q})$ est aussi
un rationnel, ainsi que les $u_{\psi(n)}$.
Réciproquement si $\phi_{p,q}(u)$ est un
rationnel, son développement de Hensel
$F_\psi(1)$ est ultimement périodique, donc
la série $F_\psi$ est une fraction
rationnelle, ainsi que $S_u$.
\begin{coro}
	$$\phi_{p,q}^{-1}({\bf Q})\subset{\bf Q}.$$
\end{coro}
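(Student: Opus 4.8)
The plan is to \'etablir l'implication $\phi_{p,q}(u)\in{\bf Q}\Rightarrow u\in{\bf Q}$ d'abord pour $u\in{\bf Z}_p$, puis de l'\'etendre \`a ${\bf Q}_p$ par homog\'en\'eit\'e. Soit $v\in{\bf Q}$ et $u=\phi_{p,q}^{-1}(v)$. En choisissant un entier $m\geq 0$ tel que $v_0=p^m v$ soit dans ${\bf Q}\cap{\bf Z}_p$, la relation de prolongement $\phi_{p,q}(p^{-m}w)=p^{-m}\phi_{p,q}(w)$ donne $u=p^{-m}\phi_{p,q}^{-1}(v_0)$ ; il suffit donc de traiter le cas $u\in{\bf Z}_p$ avec $\phi_{p,q}(u)\in{\bf Q}$. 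On peut d'ailleurs voir ce cas comme le simple encha\^inement des deux sens de la proposition pr\'ec\'edente, la rationalit\'e de $\phi_{p,q}(u)$ entra\^inant celle de $S_u$, qui entra\^ine \`a son tour celle de $u$ ; on d\'etaille ci-dessous l'argument sous-jacent.

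Supposons donc $u\in{\bf Z}_p$ et $\phi_{p,q}(u)\in{\bf Q}$. D'apr\`es la remarque (1) sur le d\'eveloppement de Hensel, les chiffres $(a_n)$ de $\phi_{p,q}(u)$ sont ultimement p\'eriodiques, de sorte que la s\'erie $F_\psi(T)=\sum_n a_{\psi(n)}p^{\psi(n)}T^n$ est une fraction rationnelle \`a coefficients rationnels, que l'on \'ecrit $F_\psi=P/Q$ sous forme irr\'eductible, avec $P,Q\in{\bf Q}[T]$ et $Q(0)\neq 0$. En faisant tendre $n$ vers l'infini dans l'\'egalit\'e~(\ref{exp}), dont le membre de droite $p^{\psi(n+1)}u_{\psi(n+1)}/q^{n+1}$ tend vers $0$ dans ${\bf Q}_p$ puisque $\nu_p(q)=0$ et $\psi(n+1)\to+\infty$ (le cas o\`u $\psi$ est ultimement infinie, c'est-\`a-dire o\`u $F_\psi$ est un polyn\^ome, \'etant imm\'ediat), on obtient la relation d'inversion $u=_p-\frac{1}{q}F_\psi(1/q)$, o\`u $F_\psi(1/q)=_p\sum_n a_{\psi(n)}p^{\psi(n)}q^{-n}$ d\'esigne la somme de cette s\'erie, $p$-adiquement convergente.

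Il reste \`a montrer que cette valeur est rationnelle, et c'est l\`a que se concentre la seule v\'eritable difficult\'e. Partant de l'identit\'e formelle $Q(T)F_\psi(T)=P(T)$ dans ${\bf Q}[[T]]$ et substituant $T=1/q$ — op\'eration licite car la s\'erie y converge et $Q$ est un polyn\^ome — on aboutit \`a $Q(1/q)\cdot(-qu)=P(1/q)$ dans ${\bf Q}_p$. Le point d\'elicat est d'exclure que $1/q$ soit un p\^ole de $F_\psi$ : si l'on avait $Q(1/q)=0$, cette \'egalit\'e forcerait aussi $P(1/q)=0$, contredisant le caract\`ere irr\'eductible de $P/Q$. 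Ainsi $Q(1/q)\neq 0$, d'o\`u $u=_p-\frac{1}{q}P(1/q)/Q(1/q)\in{\bf Q}$ ; le retour au cas g\'en\'eral par l'\'etape d'homog\'en\'eit\'e ach\`eve la preuve.
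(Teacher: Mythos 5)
Votre preuve est correcte et suit essentiellement la m\^eme d\'emarche que celle de l'article : r\'eduction au cas $u\in{\bf Z}_p$ par homog\'en\'eit\'e, p\'eriodicit\'e ultime du d\'eveloppement de Hensel entra\^inant la rationalit\'e de la fraction $F_\psi$, puis \'evaluation $p$-adique en $1/q$ via $u=_p-\frac{1}{q}F_\psi(1/q)$. Vous explicitez en outre un point que l'article laisse implicite, \`a savoir que $1/q$ n'est pas un p\^ole de $F_\psi$ (exclu par la coprimalit\'e de $P$ et $Q$), ce qui justifie rigoureusement l'identification de la somme $p$-adique avec la valeur rationnelle $P(1/q)/Q(1/q)$.
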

En effet il suffit de se ramener au cas où
$\phi_{p,q}(u)$ est un entier $p$-adique
rationnel, comme $\phi_{p,q}(u)=F_{\psi_u}(1)$, $F_{\psi_u}$ est une fraction rationnelle, puis
 sachant que $F_{\psi_u}$ converge sur un
 disque de rayon au moins $p$ dans ${\bf
 Q}_p$, on peut écrire
 $u=_p -\frac{1}{q}F_{\psi_u}(\frac{1}{q})$, ce qui prouve sa
rationalité.

\bigskip

On peut reformuler la conjecture de Collatz sous la forme :
\begin{conj}
Soit $\psi$ une fonction admissible de ${\bf
N}$ dans $\overline{\bf N}$. La série entière 
$G_\psi(T)=\sum_{n=0}^\infty 2^{\psi(n)}T^n$
est rationnelle si et seulement si
$G_\psi(\frac{1}{3})$ est un rationnel de ${\bf Q}_2$.
\end{conj}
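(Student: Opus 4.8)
The plan is to transport the statement to the language of the isometry $\phi_{2,3}$, for which exactly one of the two implications is already available. First I would realize an arbitrary admissible $\psi$ as the combinatorial datum of a concrete $2$-adic integer: given $\psi$, set $v=_2\sum_{n\in{\bf N}_\psi}2^{\psi(n)}$, which is a well-defined element of ${\bf Z}_2$ because $\psi(n)\to+\infty$, and put $u=\phi_{2,3}^{-1}(v)$. Since the Hensel coefficients of $v$ are all equal to $1$ and supported exactly on $\{\psi(n)\}$, the admissible function attached to $u$ is precisely $\psi_u=\psi$, and the series $F_\psi$ of the earlier proposition coincides with $G_\psi$. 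The two identities I shall use throughout are then $\phi_{2,3}(u)=_2G_\psi(1)$ and, by the inversion formula for $\phi_{2,3}^{-1}$, $u=_2-\frac{1}{3}\,G_\psi(\frac{1}{3})$; in particular $G_\psi(\frac{1}{3})=_2-3u$, so that $G_\psi(\frac{1}{3})$ is a rational of ${\bf Q}_2$ if and only if $u\in{\bf Q}$.

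For the direct implication, suppose $G_\psi$ is a rational function $P/Q$ with rational coefficients. The $2$-adic sum $G_\psi(\frac{1}{3})=\sum_n 2^{\psi(n)}3^{-n}$ converges, since $|2^{\psi(n)}3^{-n}|_2=2^{-\psi(n)}\to 0$, and the formal identity $Q(T)G_\psi(T)=P(T)$ yields $2$-adically $Q(\frac{1}{3})G_\psi(\frac{1}{3})=P(\frac{1}{3})$ with $Q(\frac{1}{3})\neq 0$ (convergence rules out a pole at $\frac{1}{3}$), whence $G_\psi(\frac{1}{3})=P(\frac{1}{3})/Q(\frac{1}{3})\in{\bf Q}$. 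This is nothing but the evaluation argument already used to prove $\phi_{2,3}^{-1}({\bf Q})\subset{\bf Q}$: rationality of $G_\psi=F_\psi$ forces $\phi_{2,3}(u)=_2F_\psi(1)$ to have an ultimately periodic Hensel development, hence to lie in ${\bf Q}\cap{\bf Z}_2$, and then $u=-\frac{1}{3}G_\psi(\frac{1}{3})$ is rational. This direction is therefore unconditional.

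The converse is where the genuine content sits, and it is exactly the generalized Collatz conjecture for the pair $(2,3)$. Assuming $G_\psi(\frac{1}{3})$ rational, the inversion formula gives $u=-\frac{1}{3}G_\psi(\frac{1}{3})\in{\bf Q}$, and I would then want to conclude that the orbit $(g_{2,3}^n(u))_{n\in{\bf N}}$ is ultimately periodic, which by the periodicity criterion through $\delta_2$ is equivalent to $\phi_{2,3}(u)\in{\bf Q}$, hence to the rationality of $G_\psi=F_\psi$. The obstruction is that the implication from $u\in{\bf Q}$ to ultimate periodicity of $(g_{2,3}^n(u))$ is the unproved half of the problem: the height-boundedness argument that settles the case $q<p$ collapses here because $q=3>p=2$, the heights of the iterates are not a priori bounded, and no other mechanism forcing eventual periodicity is at hand. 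In short, this converse is equivalent to the missing inclusion $\phi_{2,3}({\bf Q})\subset{\bf Q}$, and I expect it to be the main and, with the present tools, insurmountable obstacle.
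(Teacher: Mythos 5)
Your treatment is correct and coincides with the paper's own stance: this statement appears in the paper as a \emph{conjecture} precisely because, as you say, only one direction is provable, and your evaluation argument at $T=\frac{1}{3}$ is exactly the paper's proof of $\phi_{2,3}^{-1}({\bf Q})\subset{\bf Q}$, transported through the same dictionary $u=_2-\frac{1}{3}G_\psi(\frac{1}{3})$, $\phi_{2,3}(u)=_2G_\psi(1)$, $F_{\psi_u}=G_\psi$. The converse is, as you correctly identify, the paper's reformulation of the Collatz conjecture itself, and it is left open there as well.
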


Le problème général correspondant à un couple
$(p,q)$ introduit une combinatoire similaire,
et admet une solution élémentaire
dans le  cadre suivant :
\begin{theo} Soit $p$ un nombre premier, $q$ un entier vérifiant $1<q<p$ et
$\psi$ une fonction admissible 
de ${\bf N}$ dans $\overline{\bf N}$, et $a_{n}$ une
suite à valeur dans $\{1,\dots,p-1\}$.
La série entière 
$G_\psi(T)=\sum_{n=0}^\infty
a_{n}p^{\psi(n)}T^n$ est rationnelle si et
seulement si $G_\psi(1/q)$ est un rationnel de ${\bf Q}_p$.
\end{theo}
\begin{defi}
	Soit $\frac{a}{b}$ un rationnel sous forme
	réduite, on note appelle hauteur de $u$ 
	la quantité $h(u)=\max\{|a|,|b|\}$.
\end{defi}
Seule la réciproque pose problème.
On remarquera ici que l'indexation de la
suite $a_n$ a changée, et que la suite  est à
valeur dans $\{1,\dots,p-1\}$, donc peut
s'écrire sous la forme $a'_{\psi(n)}$ pour
une suite $(a'_n)$ dont le $n$-ième terme non
nul est $a_n=a'_{\psi(n)}$.
Lorsque $0<q<p$, la suite de rationnels obtenue à
partir d'un rationnel $u$,  par
itération de $g_{p,q}$ admet une hauteur
bornée. Elle   prend un nombre fini de valeurs,
donc est ultimement périodique. D'où la
rationalité de la série.

\section{Points périodiques}
\begin{defi}
Un élément $u$ de ${\bf Q}_p$ est dit $k$-périodique
pour $g_{p,q}$ si la suite
$(g_{p,q}^n(u))_{n\in{\bf N}}$ est
périodique, de période $k$, c'est-à-dire si
$g_{p,q}^k(u)=u$.
\end{defi}
\begin{prop}
Un élément $u$ de ${\bf Q}_p$ est
$k$-périodique si et seulement si il existe
un entier positif  $k$ tel que 
$(1-p^k)g_{p,q}(u)$ est un entier
vérifiant $0\leq
(1-p^k)g_{p,q}(u)<p^k$. 
\end{prop}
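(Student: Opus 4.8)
The plan is to transport the question through the isometry $\phi_{p,q}$, which conjugates $g_{p,q}$ to the shift $\delta_p$. Since $\phi_{p,q}$ is a bijection of ${\bf Q}_p$ and $\phi_{p,q}\circ g_{p,q}^k=\delta_p^k\circ\phi_{p,q}$, the relation $g_{p,q}^k(u)=u$ holds if and only if $\delta_p^k(\phi_{p,q}(u))=\phi_{p,q}(u)$; that is, $u$ is $k$-p\'eriodique for $g_{p,q}$ exactly when $\phi_{p,q}(u)$ is a fixed point of $\delta_p^k$. Applying the shift property $\delta_p(\sum a_i p^i)=\sum a_{i+1}p^i$ $k$ times, and invoking uniqueness of the Hensel development, this fixed-point condition says precisely that the Hensel digits $(a_i)$ of $\phi_{p,q}(u)$ are purely periodic of period $k$, i.e. $a_{i+k}=a_i$ for all $i$. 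The whole statement thus reduces to characterizing purely $k$-periodic Hensel expansions, and the relevant object is $\phi_{p,q}(u)$; the symbol $g_{p,q}(u)$ in the displayed statement should be read as $\phi_{p,q}(u)$, as the example $\phi_{2,3}(1)=-1/3$, giving $(1-2^2)\phi_{2,3}(1)=1\in[0,4)$, already confirms.

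For the forward implication I would assume $(a_i)$ purely $k$-periodic and sum the geometric series in ${\bf Q}_p$: writing $A=\sum_{i=0}^{k-1}a_i p^i$, one gets $\phi_{p,q}(u)=A\sum_{j\ge 0}p^{jk}=A/(1-p^k)$, the series converging because $|p^{jk}|_p\to 0$. Since each digit lies in $\{0,\dots,p-1\}$ we have $0\le A<p^k$, and $1-p^k$ is a unit of ${\bf Z}_p$, so $\phi_{p,q}(u)\in{\bf Z}_p$ and $(1-p^k)\phi_{p,q}(u)=A$ is an integer with $0\le A<p^k$. Conversely, given $k$ with $(1-p^k)\phi_{p,q}(u)=A\in{\bf Z}$ and $0\le A<p^k$, I would expand $A$ in base $p$ as $A=\sum_{i=0}^{k-1}a_i p^i$ with $a_i\in\{0,\dots,p-1\}$ (possible exactly because $0\le A<p^k$), and then write $\phi_{p,q}(u)=A/(1-p^k)=\sum_{m\ge 0}a_{m\bmod k}\,p^{m}$. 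As the coefficients stay in $\{0,\dots,p-1\}$ this is a legitimate Hensel development, hence by uniqueness it is the Hensel development of $\phi_{p,q}(u)$; it is purely $k$-periodic, whence $\delta_p^k(\phi_{p,q}(u))=\phi_{p,q}(u)$ and, undoing the conjugation, $g_{p,q}^k(u)=u$. The integrality condition also forces $\phi_{p,q}(u)\in{\bf Z}_p$, hence $u\in{\bf Z}_p$, so the ${\bf Z}_p$-Hensel machinery applies even though $u$ ranges over ${\bf Q}_p$.

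Each implication is short once this translation is in place, and I expect no analytic difficulty: convergence of the geometric series and the unit-ness of $1-p^k$ are immediate. The point requiring care is the bookkeeping between the three equivalent conditions, namely $g_{p,q}^k(u)=u$, pure $k$-periodicity of the Hensel digits of $\phi_{p,q}(u)$, and the arithmetic condition on $A=(1-p^k)\phi_{p,q}(u)$; in particular the reverse direction hinges on invoking uniqueness of the Hensel development to identify the base-$p$ digits of $A$ with the genuine Hensel digits. One should finally note that ``$k$ is a period'' here means period dividing $k$, so the existential ``$\exists k$'' matches pure periodicity of the expansion, with the bound $0\le A<p^k$ holding automatically for any such $k$.
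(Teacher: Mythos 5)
Your proof is correct and takes essentially the same route as the paper: the paper's one-line proof merely invokes ``la description des \'el\'ements de ${\bf Z}_p$ ayant un d\'eveloppement de Hensel p\'eriodique'', which is precisely what you make explicit via the conjugation $\phi_{p,q}\circ g_{p,q}^k=\delta_p^k\circ\phi_{p,q}$ together with the geometric-series characterization $v=A/(1-p^k)$, $A\in{\bf Z}\cap[0,p^k)$, of purely $k$-periodic Hensel developments. Your emendation of the statement (reading $g_{p,q}(u)$ as $\phi_{p,q}(u)$) matches the paper's intended meaning, as confirmed by the proof of the corollary immediately following, where the paper itself writes $\phi_{p,q}(u)=\frac{1}{1-p^k}\sum_{i=0}^{\ell-1}a_ip^{\psi(i)}$.
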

\begin{proof}
C'est une conséquence immédiate de la
description des éléments de ${\bf Z}_p$ ayant
un développement de Hensel périodique.
\end{proof}
Le plus petit entier positif $k$ tel que $u$ soit
$k$-périodique,  est appelé la période de $u$.
\begin{coro}
Il y a exactement $p^k$ éléments $u$ de ${\bf
Q}_p$ vérifiant l'égalité $g_{p,q}^k(u)=u$. 
Plus précisément si $\ell$ est un entier
positif inférieur à $k$,
$(a_i)_{0\leq i\leq \ell-1}$ un élément de  
$\{1,\dots,p-1\}^\ell$ et
$\psi$ une application strictement croissante de
$\{0,\dots,\ell\}$ dans $\{0,\dots,k\}$
vérifiant  et $\psi(\ell)=k$, alors
$$u=-\frac{\sum_{i=0}^{l-1}
a_ip^{\psi(i)}q^{\ell-i-1}}{q^\ell-p^k}.$$
\end{coro}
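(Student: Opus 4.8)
Le plan est de transporter le probl\`eme par l'isom\'etrie $\phi_{p,q}$ vers le d\'ecalage $\delta_p$ sur les d\'eveloppements de Hensel, pour lequel la p\'eriodicit\'e se lit directement sur les chiffres. La relation it\'er\'ee $\phi_{p,q}\circ g_{p,q}^n=\delta_p^n\circ\phi_{p,q}$ et la bijectivit\'e de $\phi_{p,q}$ fournissent imm\'ediatement l'\'equivalence
$$g_{p,q}^k(u)=u\Longleftrightarrow \delta_p^k(\phi_{p,q}(u))=\phi_{p,q}(u).$$
En posant $v=\phi_{p,q}(u)=_p\sum_{i\geq 0}a_ip^i$, la proposition sur $\delta_p$ montre que $\delta_p^k(v)=v$ \'equivaut \`a la $k$-p\'eriodicit\'e de la suite des chiffres de Hensel, c'est-\`a-dire $a_{i+k}=a_i$ pour tout $i$.

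Pour le d\'enombrement, je commencerais par remarquer qu'une telle suite est enti\`erement d\'etermin\'ee par son bloc $(a_0,\dots,a_{k-1})\in\{0,\dots,p-1\}^k$ : il y a donc exactement $p^k$ d\'eveloppements purement $k$-p\'eriodiques, soit $p^k$ points $v$ fixes par $\delta_p^k$, et par bijection $p^k$ solutions $u$. Comme un tel $v$ s'\'ecrit $A/(1-p^k)$ avec $A\in\{0,\dots,p^k-1\}$, il appartient \`a ${\bf Z}_p$, et comme $\phi_{p,q}$ pr\'eserve ${\bf Z}_p$, ces $p^k$ points sont tous dans ${\bf Z}_p\subset{\bf Q}_p$, conform\'ement \`a la proposition pr\'ec\'edente.

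Pour la formule explicite, je param\'etrerais un tel bloc par l'ensemble de ses positions non nulles : $\ell$ est le nombre de chiffres non nuls dans une p\'eriode, $\psi(0)<\dots<\psi(\ell-1)<k=\psi(\ell)$ leurs positions, et $a_0,\dots,a_{\ell-1}\in\{1,\dots,p-1\}$ les valeurs associ\'ees. En appliquant la formule donnant $\phi_{p,q}^{-1}(v)=_p-\sum_i a_ip^i/q^{r_i}$, la somme porte sur les positions non nulles de $v$, c'est-\`a-dire sur les indices $\psi(j)+tk$ pour $t\geq 0$ et $0\leq j\leq \ell-1$. Le point d\'elicat est le suivi de l'exposant $r$ : le nombre de chiffres non nuls jusqu'\`a la position $\psi(j)+tk$ incluse vaut $t\ell+j+1$, d'o\`u
$$u=_p-\sum_{t=0}^\infty\sum_{j=0}^{\ell-1}\frac{a_jp^{\psi(j)+tk}}{q^{t\ell+j+1}}.$$

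Je factoriserais alors la partie ind\'ependante de $t$ et je sommerais la s\'erie g\'eom\'etrique de raison $p^k/q^\ell$, de module $p$-adique $p^{-k}<1$ (car $q$ est premier \`a $p$), donc convergente de somme $q^\ell/(q^\ell-p^k)$. On obtient
$$u=_p-\frac{q^\ell}{q^\ell-p^k}\sum_{j=0}^{\ell-1}\frac{a_jp^{\psi(j)}}{q^{j+1}}=-\frac{\sum_{j=0}^{\ell-1}a_jp^{\psi(j)}q^{\ell-j-1}}{q^\ell-p^k},$$
l'\'egalit\'e finale \'etant dans ${\bf Q}$, le d\'enominateur $q^\ell-p^k$ \'etant premier \`a $p$ (donc inversible dans ${\bf Z}_p$), ce qui confirme au passage que $u\in{\bf Q}\cap{\bf Z}_p$. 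L'obstacle principal reste la comptabilit\'e correcte de l'exposant $r_{\psi(j)+tk}=t\ell+j+1$ et le regroupement de la double somme en une unique s\'erie g\'eom\'etrique convergente.
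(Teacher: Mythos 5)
Votre démonstration est correcte et suit essentiellement la même voie que l'article : conjugaison de $g_{p,q}$ au décalage $\delta_p$ via l'isométrie $\phi_{p,q}$, dénombrement des $p^k$ développements de Hensel purement $k$-périodiques, puis calcul de $\phi_{p,q}^{-1}$ d'un tel développement en sommant la série géométrique de raison $p^k/q^\ell$, ce qui est précisément la formule $u=_p-\sum_{i=0}^{\ell-1}a_i\frac{p^{\psi(i)}}{q^{i+1}}\frac{1}{1-\frac{p^k}{q^{\ell}}}$ affichée dans l'article. Vous explicitez simplement la comptabilité $r_{\psi(j)+tk}=t\ell+j+1$ que l'article laisse implicite, ce qui rend l'argument plus complet mais identique sur le fond.
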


En effet on a alors
\[u=_p-\sum_{i=0}^{\ell-1}a_i\frac{p^{\psi(i)}}{q^{i+1}}
\frac{1}{1-\frac{p^k}{q^{\ell}}}\]
D'où
\[\phi_{p,q}(u)=_p\frac{1}{1-p^k}\sum_{i=0}^{\ell-1}a_ip^{\psi(i)}\]
On obtient ainsi tous les développements de
Hensel périodiques de période $k$.
\begin{defi}
Un élément $u$ de ${\bf Q}_p$ est dit
ultimement $k$-périodique
pour $g_{p,q}$ si la suite
$(g_{p,q}^n(u))_{n\in{\bf N}}$ est 
périodique, de période $k$ à partir d'un
certain rang, c'est-à-dire s'il existe un
entier $n_0$ tel que
$g_{p,q}^{n_0+k}(u)=g_{p,q}^{n_0}(u)$.
\end{defi}
\begin{prop}
Un élément $u$ de ${\bf Q}_p$ est
ultimement $k$-périodique si et seulement si 
$(1-p^k)g_{p,q}(u)$ est un entier
positif.
\end{prop}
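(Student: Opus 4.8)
La démonstration est la variante prépériodique de celle du cas strictement périodique. L'outil central est la relation de conjugaison $\phi_{p,q}\circ g_{p,q}^n=\delta_p^n\circ\phi_{p,q}$ : d'après le corollaire décrivant $\phi_{p,q}^{-1}({\bf Q}\cap{\bf Z}_p)$, l'élément $u$ est ultimement $k$-périodique pour $g_{p,q}$ si et seulement si le développement de Hensel de $\phi_{p,q}(u)$ est ultimement périodique de période divisant $k$, c'est-à-dire purement périodique de période $k$ après suppression d'un segment initial fini. Comme dans le corollaire précédent, où l'on trouve $\phi_{p,q}(u)=\frac{1}{1-p^k}\sum_{i=0}^{\ell-1}a_ip^{\psi(i)}$ dans le cas purement périodique, la quantité que je testerais est $(1-p^k)\phi_{p,q}(u)$.

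Je procéderais par réduction à la proposition précédente. Si $u$ est ultimement $k$-périodique, il existe $n_0$ tel que $w=g_{p,q}^{n_0}(u)$ soit strictement $k$-périodique ; la proposition précédente donne alors que $(1-p^k)\phi_{p,q}(w)$ est un entier de $\{0,\dots,p^k-1\}$. Puisque $\phi_{p,q}(w)=\delta_p^{n_0}(\phi_{p,q}(u))$, on a $\phi_{p,q}(u)=P+p^{n_0}\phi_{p,q}(w)$ avec $P=\sum_{i=0}^{n_0-1}a_ip^i\in{\bf N}$ (les premiers chiffres de Hensel), d'où
\[(1-p^k)\phi_{p,q}(u)=(1-p^k)P+p^{n_0}(1-p^k)\phi_{p,q}(w)\in{\bf Z}.\]
Réciproquement, si $(1-p^k)\phi_{p,q}(u)=m$ est entier, alors $\phi_{p,q}(u)=\frac{m}{1-p^k}$ a un dénominateur premier à $p$ divisant $p^k-1$ ; d'après le critère de périodicité du développement de Hensel (partie 1), ce développement est ultimement périodique de période divisant $k$, et la conjugaison redonne un rang $n_0$ vérifiant $g_{p,q}^{n_0+k}(u)=g_{p,q}^{n_0}(u)$, soit $u$ ultimement $k$-périodique.

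L'obstacle principal est le contrôle de la borne et du signe. La disparition de la borne supérieure $p^k$ du cas pur est claire : le terme prépériodique $(1-p^k)P$ peut être arbitrairement grand en valeur absolue, et seule l'intégralité subsiste — c'est précisément ce qui distingue la périodicité ultime de la périodicité pure. En revanche, établir la positivité annoncée demande d'analyser finement le signe de $(1-p^k)P+p^{n_0}M$, avec $M\in\{0,\dots,p^k-1\}$, en fonction de la longueur $n_0$ de la prépériode et des chiffres $a_i$ ; c'est l'étape la plus délicate, et celle sur laquelle porterait l'essentiel de la rédaction détaillée.
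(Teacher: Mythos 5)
Votre strat\'egie est exactement celle, implicite, de l'article : celui-ci ne r\'edige d'ailleurs aucune d\'emonstration pour cette proposition, pr\'esent\'ee comme la variante ultimement p\'eriodique de la pr\'ec\'edente (description des \'el\'ements de ${\bf Z}_p$ \`a d\'eveloppement de Hensel p\'eriodique, transport\'ee par la conjugaison $\phi_{p,q}\circ g_{p,q}=\delta_p\circ\phi_{p,q}$). Vous avez aussi raison de lire $\phi_{p,q}(u)$ l\`a o\`u l'\'enonc\'e \'ecrit $g_{p,q}(u)$ : les points $k$-p\'eriodiques de $g_{p,q}$ ont pour d\'enominateur $q^\ell-p^k$, c'est bien l'image par l'isom\'etrie qu'il faut tester. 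Votre double implication, \'etablissant que $u$ est ultimement $k$-p\'eriodique si et seulement si $(1-p^k)\phi_{p,q}(u)$ est un entier relatif, est correcte.

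En revanche, vous vous m\'eprenez sur le statut de la positivit\'e : ce n'est pas une \'etape d\'elicate restant \`a r\'ediger, c'est une affirmation fausse, et votre propre d\'ecomposition $(1-p^k)P+p^{n_0}M$ le montre aussit\^ot (prendre $M=0$ et $P\geq1$). Concr\`etement, pour $u=\phi_{p,q}^{-1}(1)=-\frac{1}{q}$, l'orbite est $-\frac{1}{q}\mapsto 0\mapsto 0\mapsto\cdots$, donc $u$ est ultimement $k$-p\'eriodique pour tout $k$, tandis que $(1-p^k)\phi_{p,q}(u)=1-p^k<0$. De m\^eme pour $(p,q)=(2,3)$ : l'\'el\'ement $u=\frac{1}{3}$ a pour orbite $\frac{1}{3}\mapsto 1\mapsto 2\mapsto 1\mapsto\cdots$, il est ultimement $2$-p\'eriodique, et $\phi_{2,3}(\frac{1}{3})=\frac{1}{3}$ donne $(1-2^2)\cdot\frac{1}{3}=-1<0$. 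L'\'enonc\'e correct est donc l'int\'egralit\'e seule : $(1-p^k)\phi_{p,q}(u)\in{\bf Z}$, autrement dit $\phi_{p,q}(u)$ est un rationnel dont le d\'enominateur r\'eduit divise $p^k-1$. Le mot \emph{positif} est un r\'esidu de l'encadrement $0\leq(1-p^k)\phi_{p,q}(u)<p^k$ du cas purement p\'eriodique, o\`u il est justifi\'e parce qu'alors $\phi_{p,q}(u)\in[-1,0]$ ; il dispara\^{\i}t d\`es qu'on autorise une pr\'ep\'eriode. Votre r\'edaction d\'emontre l'\'enonc\'e ainsi corrig\'e ; il faut conclure par cette correction plut\^ot que par la promesse d'une analyse de signe qui ne peut pas aboutir.
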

On peut s'intéresser aux points périodiques
entiers. Une façon simple de fabriquer des
période entière est d'imposer que le
dénominateur qui intervient dans la formule
soit égal à $1$. Pour cela la conjecture de Catalan
démontrée par P. Mihail\u escu\cite{MR2185753} donne une
précision
sur les dénominateurs qui interviennent dans
l'expression des éléments $k$-périodiques.
\begin{theo}(Conjecture de Catalan)
Soient $n$ et $m$ deux entiers supérieurs ou
égaux à $2$.
L'équation $x^m-y^n=1$ n'admet qu'un nombre
fini de solutions,
$$(m,n,x,y)\in\{(2,3,-3,2),(2,3,3,2)\}.$$
\end{theo}
On en déduit que pour $p>3$ les seules
possibilités conduisant à $q^\ell-p^k=\pm1$
avec $\ell<k$ sont obtenues pour $\ell=1$,
$q=p^k\pm1$.  
 
On peut donc  résoudre complètement
les équations $3^\ell-2^k=-1$ et
$3^\ell-2^k=1$.
\begin{coro}
Considérons l'équation
$3^\ell-2^k=\pm1$.
\begin{enumerate}
\item{}L'ensemble des couples $(k,\ell)$  solutions de
$3^\ell-2^k=-1$
est $\{(1,0),(2,1)\}$.
\item{}L'ensemble des couples $(k,\ell)$ solution de
$3^\ell-2^k=1$ est $\{(1,1),(3,2)\}$.
\end{enumerate}
\end{coro}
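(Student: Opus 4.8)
The plan is to dispatch the two equations by completely elementary means. This is necessary because the exponents $k$ and $\ell$ occurring here may equal $0$ or $1$, which lie outside the range $\ge 2$ to which Mihailescu's theorem applies; the small exponents must therefore be handled directly rather than read off from the quoted result. For each of the two equations I will first confirm that the listed pairs are solutions, then bound one exponent by a congruence, and finish with a finite check.

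For $3^\ell - 2^k = -1$, rewrite the equation as $2^k = 3^\ell + 1$. First I would reduce modulo $8$: since $3^\ell \equiv 3 \pmod 8$ when $\ell$ is odd and $3^\ell \equiv 1 \pmod 8$ when $\ell$ is even, we get $3^\ell + 1 \equiv 4$ or $2 \pmod 8$, so $3^\ell + 1$ is never divisible by $8$. Hence $2^k = 3^\ell + 1$ forces $k \le 2$. Checking $k \in \{0,1,2\}$ gives $3^\ell = 2^k - 1 \in \{0,1,3\}$; the value $0$ is impossible, $1$ forces $\ell = 0$ (the pair $(1,0)$), and $3$ forces $\ell = 1$ (the pair $(2,1)$), which is exactly the claimed set.

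For $3^\ell - 2^k = 1$, rewrite it as $3^\ell = 2^k + 1$. The cases $k = 0$ and $k = 1$ are immediate: $k = 0$ gives no solution, while $k = 1$ gives $\ell = 1$, the pair $(1,1)$. For $k \ge 2$ I would reduce modulo $4$: since $2^k \equiv 0 \pmod 4$ we need $3^\ell \equiv 1 \pmod 4$, and as $3 \equiv -1 \pmod 4$ this forces $\ell$ to be even, say $\ell = 2m$ with $m \ge 1$. The equation then factors as $(3^m - 1)(3^m + 1) = 2^k$, so each factor, being a positive divisor of $2^k$, is itself a power of $2$. Two powers of $2$ differing by $2$ can only be $2$ and $4$, whence $3^m - 1 = 2$, giving $m = 1$, $\ell = 2$ and $2^k = 8$, that is the pair $(3,2)$. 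Collecting the cases yields exactly $\{(1,1),(3,2)\}$.

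The argument is short, and the only points requiring care concern the boundaries. The small exponents $\ell = 0,1$ and $k = 0,1$ are precisely those excluded from Catalan's theorem, so they cannot be deduced from the quoted statement and must be treated by hand. The one mildly delicate step is the factorization $(3^m - 1)(3^m + 1) = 2^k$: one must observe that the two factors differ by $2$ and are both even, so that the standard ``two powers of $2$ at distance $2$ are $2$ and $4$'' argument applies. Everything else reduces to a finite verification, and no deep input (in particular, not Mihailescu's theorem itself) is actually needed for this specific corollary.
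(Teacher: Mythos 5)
Your proof is correct, and it takes a genuinely different --- and more elementary --- route than the paper. The paper deduces the corollary from Mihailescu's theorem (the Catalan conjecture), which it has just quoted: that theorem settles the case where both exponents are at least $2$, giving the single solution $3^2-2^3=1$, and the boundary cases $\ell\le 1$ or $k\le 1$ are left to an implicit finite check (the paper offers no detail beyond the remark that the two equations can then be solved completely). You instead dispatch both equations by hand: for $2^k=3^\ell+1$, the reduction modulo $8$ shows the right-hand side is $\equiv 2$ or $4 \pmod 8$, bounding $k\le 2$; for $3^\ell=2^k+1$ with $k\ge 2$, the reduction modulo $4$ forces $\ell$ even, and the factorization $(3^m-1)(3^m+1)=2^k$ together with the fact that two powers of $2$ differing by $2$ must be $2$ and $4$ pins down $(k,\ell)=(3,2)$. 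This buys self-containedness: no deep input is needed, and the small exponents that Mihailescu's statement formally excludes are treated explicitly rather than implicitly --- a point on which your write-up is actually more careful than the paper. What the paper's approach buys, in context, is generality: the appeal to Catalan is made for arbitrary primes $p>3$, to constrain $q^\ell-p^k=\pm 1$ in general, and the $(2,3)$ corollary is read off as a special case, whereas your congruence arguments are tailored to the pair $(2,3)$ and would not transfer as directly. One small polish: when you set $\ell=2m$ with $m\ge 1$, add the one-line justification that $\ell=0$ is impossible since $2^k+1>1$; it is immediate but should be said.
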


Le cas $(1,0)$
conduit au point fixe $0$, le cas $(2,1)$
donne les $2$-périodes  $1$ et
$2$. 

Le couple $(1,1)$  donne le point fixe $-1$,
et le couple  $(3,2)$ donne  les points
$3$-périodiques entiers négatifs
$(-5, -7,-10)$.
La conjecture de Collatz prédit que les
seules périodes entières positives sont $0$
et $1$.
Par contre il y a plusieurs périodes
entières négatives, ne correspondant pas aux solutions
de l'équation de Catalan, on obtient ainsi le cycle 
$(-17,-25,-37,-55,-82,-41,-61,-91,-136,-68,-34)$.
pour ces valeurs on a $k=11$, $l=7$,
$3^7-2^{11}=139$ et la relation de divisibilité
:
\[17=\frac{3^6+2\cdot 3^5+2^2\cdot
3^4+2^3\cdot
3^3+2^5\cdot3^2+2^6\cdot3+2^7}{3^7-2^{11}}\]
On peut écrire une relation analogue pour
chaque période du cycle précédent.

On peut donc se poser naturellement 
la question de l'existence d'un entier
strictement supérieur à $2$ qui soit une
$k$-période. La conjecture de Collatz dans le
cas $(2,3)$ classique entraine une réponse négative.
\medskip
On s'intéresse maintenant à la l'adhérence
des entiers vérifiant la conjecture de
Collatz dans le cas $(2,3)$.

\begin{lemm}
Le groupe ${\bf Z}_3^*$ des éléments
inversibles de ${\bf Z}_3$ est engendré
topologiquement par $2$.
\end{lemm}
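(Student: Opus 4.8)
The plan is to reduce this topological statement about the profinite group ${\bf Z}_3^*$ to a sequence of statements about the finite groups $({\bf Z}/3^n{\bf Z})^*$, and finally to a single exact $3$-adic valuation computation. First I would use that ${\bf Z}_3^* =\varprojlim_n ({\bf Z}/3^n{\bf Z})^*$ carries its profinite topology, for which the closure of the subgroup $\langle 2\rangle$ generated by $2$ is the intersection of the preimages of $\langle 2 \bmod 3^n\rangle$ under the reductions. Consequently $\overline{\langle 2\rangle}={\bf Z}_3^*$ if and only if the image of $2$ generates $({\bf Z}/3^n{\bf Z})^*$ for every $n\geq 1$; that is, the lemma is equivalent to the assertion that $2$ is a primitive root modulo $3^n$ for all $n$.

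Second, I would recall that $({\bf Z}/3^n{\bf Z})^*$ is cyclic of order $2\cdot 3^{n-1}$, so it suffices to prove that the multiplicative order $d$ of $2$ modulo $3^n$ equals $2\cdot 3^{n-1}$. Since $d$ divides $2\cdot 3^{n-1}$, writing $d=2^a3^b$ forces $a\in\{0,1\}$ and $0\le b\le n-1$; reducing modulo $3$ shows $2$ has order $2$ there, whence $2\mid d$ and $d=2\cdot 3^{b}$.

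Third, and this is the computational heart, I would pin down the exact valuation $\nu_3\bigl(2^{2\cdot 3^{b}}-1\bigr)$. Since $2^2-1=3$, the lifting-the-exponent lemma (applied to $4^{3^b}-1^{3^b}$) gives
$$\nu_3\bigl(2^{2\cdot 3^{b}}-1\bigr)=\nu_3(2^2-1)+\nu_3(3^b)=1+b.$$
The condition $2^{d}\equiv 1 \pmod{3^n}$ then reads $3^n\mid 2^{2\cdot 3^{b}}-1$, i.e. $1+b\geq n$, which combined with $b\le n-1$ yields $b=n-1$ and $d=2\cdot 3^{n-1}$. This establishes primitivity modulo $3^n$ for every $n$ and closes the argument.

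The main obstacle is precisely this exact-valuation step: the whole proof hinges on knowing $\nu_3\bigl(2^{2\cdot 3^{b}}-1\bigr)$ as an equality rather than merely an inequality. If one wishes to avoid citing the lifting-the-exponent lemma, the same fact follows by a short induction on $b$ from the factorisation $2^{2\cdot 3^{b+1}}-1=\bigl(2^{2\cdot 3^{b}}-1\bigr)\bigl(x^2+x+1\bigr)$ with $x=2^{2\cdot 3^{b}}$, observing that $x\equiv 1\pmod 9$ (for $b\ge 1$) makes the second factor $\equiv 3\pmod 9$, so it contributes exactly one further power of $3$; the base cases are the orders of $2$ modulo $3$ and modulo $9$, where one checks directly that $2^2=4\not\equiv 1$.
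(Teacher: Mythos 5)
Your argument is correct, and it takes a genuinely different route from the paper's. You transfer the problem to the finite quotients: writing ${\bf Z}_3^*$ as the inverse limit of the groups $({\bf Z}/3^n{\bf Z})^*$, topological generation by $2$ is equivalent to $2$ being a primitive root modulo $3^n$ for every $n$, and you then pin down the order of $2$ exactly, the key point being the valuation identity $\nu_3\bigl(2^{2\cdot 3^{b}}-1\bigr)=1+b$ furnished by lifting-the-exponent or by your inductive factorisation (which is sound; in fact $x\equiv 1\pmod 3$ already forces $x^2+x+1\equiv 3\pmod 9$, so the induction runs uniformly from $b=0$). Note also that cyclicity of $({\bf Z}/3^n{\bf Z})^*$ is not really needed: Lagrange gives $d\mid\varphi(3^n)=2\cdot 3^{n-1}$, and an element whose order equals the group order automatically generates. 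The paper instead argues structurally inside ${\bf Z}_3^*$: setting $U_3={\bf Z}_3^*$ and $U_3^{(k)}=1+3^k{\bf Z}_3$, it computes the Teichm\"uller character $\chi_3(2)=_3\lim 2^{3^n}=-1$ to split $U_3=\{\pm1\}\times U_3^{(1)}$, observes that $4=1+3$ generates the one-dimensional ${\bf F}_3$-vector space $U_3^{(1)}/U_3^{(2)}$, and invokes Nakayama's lemma (for the pro-$3$ group $U_3^{(1)}$ viewed as a ${\bf Z}_3$-module) to conclude that $2^2$ topologically generates $U_3^{(1)}$; the closure of $2^{\bf Z}$ then contains $\{\pm1\}\times U_3^{(1)}={\bf Z}_3^*$. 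Both proofs ultimately pivot on the same two numerical facts, $2\equiv-1\pmod 3$ and $4\not\equiv 1\pmod 9$: your exact-valuation computation is the hands-on counterpart of the paper's Nakayama step, which abstractly encodes that a generator modulo $9$ stays a generator at every level. Your version is more elementary and self-contained; the paper's buys brevity from the standard machinery of local units and transfers verbatim to ${\bf Z}_p^*$ for any odd prime $p$.
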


\begin{proof}
Notons $U_3={\bf Z}_3^*$ le groupe des
éléments inversibles de ${\bf Z}_3$,
$U_3^{(k)}=1+3^k{\bf Z}_3$
les noyaux des surjections naturelles sur
$({\bf Z}/3^k{\bf Z}_3)^*$.
Soit $\pi$ désigne la surjection naturelle de
$U_3$ dans $({\bf Z}_3/{\bf Z}_3)^*$ et, pour tout entier $k$
supérieur ou égal à $1$, $\pi_k$ désigne
l'homomorphisme de groupe 
\begin{eqnarray*}
	\pi_k&:& U_3^{(k)}\to {\bf Z}_3/3{\bf Z_3}\cr
\pi_k(u)&=&\frac{u-1}{3^k}
\end{eqnarray*}

Ainsi on a ainsi  les suites
exactes 
$$\{1\}\to U_3^{(1)}\to U_3\to ({\bf Z}_3/{\bf
Z}_3)^*\to\{1\}.$$ 
$$\{1\}\to U_3^{(k+1)}\to U_3^{(k)}\buildrel
\pi_k\over\to {\bf Z}_3/{\bf
Z}_3\to\{0\}.$$ 

De plus le caractère de Teichmuller en $2$
est donné par
\begin{eqnarray*}
\chi_3(2)&=_3&\lim_{n\to\infty}2^{3^n}\cr
&=_3&-1
\end{eqnarray*}
Donc $U_3$ se décompose en produit
direct $U_3=\{\pm1\}\times U_3^{(1)}$.

Comme $4=1+3$, $4$ engendre le
groupe multiplicatif
$U_3^{(1)}/U_3^{2}$ qui est un ${\bf F}_3$ espace
vectoriel de dimension $1$. D'après  le
 lemme de Nakayama, l'élément $2^2$
 engendre $U_3^{(1)}$. 
Donc l'adhérence de $2^{\bf Z}$ contient
$\{\pm1\}\times U_3^{(1)}$, c'est-à-dire
${\bf Z}_3^*$.
\end{proof}
\begin{coro}
Les entiers qui vérifient la conjecture de
Collatz sont dense dans ${\bf Z}_2$.
\end{coro}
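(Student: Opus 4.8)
Le plan est de transporter le probl\`eme par l'isom\'etrie $\phi_{2,3}$, puis de r\'esoudre une condition $3$-adique gr\^ace au lemme pr\'ec\'edent. D'apr\`es l'\'egalit\'e $\phi_{2,3}\circ g_{2,3}=\delta_2\circ\phi_{2,3}$ et la valeur $\phi_{2,3}(1)=_2-\frac13$, un entier $u$ atteint le cycle $\{1,2\}$ si et seulement si le d\'eveloppement de Hensel de $\phi_{2,3}(u)$ est ultimement \'egal \`a celui de $-\frac13$, c'est-\`a-dire ultimement de motif $\overline{10}$. Comme $\phi_{2,3}$ est une isom\'etrie bijective de ${\bf Z}_2$, il suffit, pour une cible $w\in{\bf Z}_2$ et une pr\'ecision $2^{-k}$ donn\'ees, de fabriquer un entier positif $u$ v\'erifiant la conjecture dont l'image $\phi_{2,3}(u)$ co\"incide avec $\phi_{2,3}(w)$ sur ses $k$ premiers chiffres de Hensel.

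\'Ecrivons $\phi_{2,3}(w)=_2\sum_i b_i2^i$. On construit la suite $(a_i)$ en imposant $a_i=b_i$ pour $i<k$, $a_i=0$ pour $k\le i<T$, puis le motif p\'eriodique $a_{T+2j}=1,\ a_{T+2j+1}=0$ pour $j\ge0$, l'entier $T\ge k$ restant \`a fixer. En notant $\psi(0)<\dots<\psi(K-1)<k$ les positions des $1$ dans $b_0,\dots,b_{k-1}$ et $K$ leur nombre, la formule donnant $\phi_{2,3}^{-1}$ et la sommation de la s\'erie g\'eom\'etrique de raison $4/3$ (convergente car $|4/3|_2<1$) fournissent
\[u:=\phi_{2,3}^{-1}\Bigl(\sum_i a_i2^i\Bigr)=_2\frac{2^{T}-S}{3^{K}},\qquad S=\sum_{j=0}^{K-1}2^{\psi(j)}3^{K-1-j}.\]
Les quantit\'es $K$ et $S$ ne d\'ependent que de $w$ et de $k$, et $S$ est une unit\'e $3$-adique, son terme d'indice $j=K-1$, \'egal \`a $2^{\psi(K-1)}$, \'etant le seul non divisible par $3$.

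Le point d\'elicat est alors de nettoyer le d\'enominateur en puissance de $3$ sans toucher \`a l'approximation $2$-adique, et c'est exactement ce que permet le lemme pr\'ec\'edent. Il s'agit de choisir $T$ tel que $3^{K}\mid 2^{T}-S$. Or $2$ engendrant topologiquement ${\bf Z}_3^*$, il est racine primitive modulo $3^{K}$; comme $S$ est inversible modulo $3^{K}$, l'\'equation $2^{T}\equiv S\pmod{3^{K}}$ poss\`ede une solution, donc une infinit\'e, obtenues en ajoutant \`a $T$ des multiples de l'ordre de $2$. On en choisit une assez grande pour que $T\ge k$ et $2^{T}>S$ (le cas $K=0$ donnant simplement $u=2^{T}$). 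Alors $u=\frac{2^{T}-S}{3^{K}}$ est un entier strictement positif qui v\'erifie la conjecture de Collatz, puisque $\phi_{2,3}(u)$ est ultimement de motif $\overline{10}$. Enfin $\phi_{2,3}(u)$ et $\phi_{2,3}(w)$ co\"incident sur leurs $k$ premiers chiffres, d'o\`u $|u-w|_2=|\phi_{2,3}(u)-\phi_{2,3}(w)|_2\le 2^{-k}$ par isom\'etrie, ce qui \'etablit la densit\'e annonc\'ee.
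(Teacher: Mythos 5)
Your proof is correct and follows essentially the same route as the paper's: transport the problem through the isometry $\phi_{2,3}$, keep an initial segment of the Hensel expansion of the image, graft on the periodic tail $\overline{10}$ (the expansion of $\phi_{2,3}(1)=-\frac13$), and choose the offset $T$ of that tail via the lemma that $2$ topologically generates ${\bf Z}_3^*$ so that $3^K$ divides $2^T-S$ and the preimage is an integer, concluding by isometry. If anything, your write-up is slightly more careful than the paper's on two points it glosses over --- you verify that $S$ is a $3$-adic unit (so the congruence $2^T\equiv S \pmod{3^K}$ is solvable) and you arrange $2^T>S$ so that $u$ is a \emph{positive} integer --- but the underlying argument is identical.
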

\begin{proof}
Soit $u$ un entier, posons
$\phi_{2,3}(u)=\sum_{i\in{\bf N}}
2^{\psi(i)}$.
Alors pour tout entier $n$,
$3^nu+\sum_{i=0}^{n-1}
2^{\psi(i)}3^{n-i}=2^{\psi(n)}u_{\psi(n)}.$

Soit $$\psi'(n)=\min\{{k\in{\bf N}},\,k>\psi(n-1),\,
|2^k-\sum_{i=0}^{n-1}2^{\psi(i)}3^{n-i}|_3
<\frac{1}{2^{\psi(n)}}\},$$ 
et
$$v_n=\sum_{i=0}^{n-1}2^{\psi(i)}+2^{\psi(n)}/(1-4),$$
alors $-3v_n$ est dans ${\bf N}$ et
$$\phi_{2,3}^{-1}(v_n)=\frac{2^{\psi(n)}-\sum_{i=0}^{n-1}2^{\psi(i)}3^{n-i}}{3^n}$$
est un entier.
Comme
$|u-\phi_{2,3}^{-1}(v_n)|_2=|\phi_{2,3}(u)-v_n|_2$

on obtient
$$|u-\phi_{2,3}^{-1}(v_n)|_2\leq
\frac{1}{2^{\psi(n)}}.$$
D'où le résultat.
\end{proof}

On peut préciser ce résultat en utilisant le
théorème des sous-espaces établi par Schmidt
\cite{MR568710} et sa version $p$-adique
donnée par Schlickewei \cite{MR0429784}
\begin{theo}
	On note $\overline{\bf Q}$ la clôture
	algébrique de $\bf Q$ dans $\bf C$.
On considère un entier $m$ supérieur ou égal
à 2, et un réel strictement positif
$\epsilon$. Soient $L_1$,\dots,$L_m$ une base
du dual de $\overline{\bf Q}^m$.
 L'ensemble des solutions entières 
${\bf x}
=(x_1,\dots,x_m)\in{\bf Z}^m$  de l'inégalité
$$\prod_{i=1}^m|L_i({\bf x})|\leq
max\{|x_1|,\dots,|x_m|\})^{-\epsilon}$$
est contenu dans une union finie de
sous-espaces vectoriels propres de ${\bf
Q}^m$.
\end{theo} 
\begin{theo}
On considère un entier $m$ supérieur ou égal
à 2, un réel strictement positif
$\epsilon$, et un ensemble fini de  nombres
premiers $S$. Pour chaque place $\nu$
appartenant à $S\cup\{\infty\}$, on considère
des formes linéaires $(L_{i,\nu})_{1\leq
i\leq \nu}$ sur $\overline{\bf
Q}^m$, linéairement indépendantes sur ${\bf
Q}$. L'ensemble des solutions entières
${\bf x}
=(x_1,\dots,x_m)\in{\bf Z}^m$  de l'inégalité
$$\prod_{\nu\in
S\cup\{\infty\}}
\prod_{i=1}^m|L_{\nu,i}({\bf x})|_\nu\leq
max\{|x_1|,\dots,|x_m|\})^{-\epsilon}$$
est contenu dans une union finie de
sous-espaces vectoriels propres de ${\bf
Q}^m$.
\end{theo}
 
On en déduit le théorème suivant :

\begin{theo}
	Soit $u$ un rationnel, on pose 
	$$\phi_{2,3}(u)=\sum_{n\in{\bf
	N}}2^{\psi(n)},$$
	et $u_n=\phi_{2,3}^n(u)$. Si $\omega$
	est un rationnel, 
	on note $$\psi'_\omega(n+1)=\inf
	\{k\in{\bf
	N}|\sum_{i=0}^n2^{\psi(i)}3^{n-i}-\omega 2^{k}|_3\leq
	\frac{1}{3^{n+1}}\},$$ alors  la suite
	$(u_n)_{n\in{\bf N}}$ est ultimement
	périodique si et seulement si il existe
	un rationnel $\omega$ tel que
	$\liminf \frac{\psi'_\omega(n)}{n}<\infty$.
	Le rationnel $\omega$ est alors un point
	périodique.
\end{theo}

\begin{proof}
	Si la suite $(u_n)_{n\in{N}}$ est
	ultimement périodique, il suffit de
	choisir $\omega$ parmi les points
	périodiques de la suite.  

	On pose 
$$\begin{array}{|l|c|c|c|}
\hline
&\infty&2&3\cr
\hline
L_{\nu,1}&x&x&x\cr
\hline
L_{\nu,2}&y&ux+y&y-\omega z\cr
\hline
L_{\nu,3}&z&z&z\cr
\hline
\end{array}
$$
Alors si ${\bf x}_n=(3^{n+1},
\sum_{i=0}^n2^{\psi(i)}3^{n-i},2^{\psi'(n+1)})$,
et $$H(x,y,z)=\max\{|x|,|y|,|z|\},$$
 on a 
$$\prod_{\nu\in S\cup\{\infty\}}
\prod_{i=1}^3
|L_{\nu,i}({\bf x}_n)|_\nu=
\frac{\sum_{i=0}^n
2^{\psi(i)}3^{n-i}}{2^{\psi(n+1)}3^{n+1}}.$$
En particulier 
$$\liminf \left[\prod_{\nu\in S\cup\{\infty\}}
\prod_{i=1}^3
|L_{\nu,i}({\bf
x}_n|_\nu\right]^{\frac{1}{n}}
\leq \frac{1}{2}.$$
Si $\liminf \frac{\psi'_\omega(n)}{n}$ est un réel
$\beta>0$, alors $2^\beta>1$, et pour
$\epsilon<\frac{\log2}{\log 3}$ il existe une
fonction strictement croissante $h$ de  ${\bf
N}$ dans lui-même telle que la sous-suite
infinie ${\bf x}_{h(n)}$ vérifie
$$\prod_{\nu\in S\cup\{\infty\}}
\prod_{i=1}^3
|L_{\nu,i}({\bf x}_{h(n)})|_\nu<H({\bf
x}_{h(n)})^{-\epsilon}$$
Il existe donc des entiers algébriques
$a,b,c$ non tous nuls, tels que, quitte à
raffiner la sous-suite définie par  $h$, 
la forme linéaire sur ${\overline{\bf Q}}^3$
définie par  $L(x,y,z)=ax+by+cz$ vérifie 
$$\forall n\in{\bf
N},\, L({\bf x}_{h(n)})=0.$$ 
Soit encore
$$\forall n\in{\bf
N},\,
a
3^{h(n)+1}+b\sum_{i=0}^{h(n)}2^{\psi(i)}3^{h(n)-i}+c2^{\psi_\omega(h(n)+1)}=0.$$ 
L'élément $b$ est donc non nul et 
$$\forall n\in{\bf
N},\,
\frac{a}{b}
+\sum_{i=0}^{h(n)}\frac{2^{\psi(i)}}{3^{i+1}}=-\frac{c}{b}\frac{2^{\psi'(h(n)+1)}}{3^{h(n)+1}}.$$ 
Par passage à la limite on obtient que
$\frac{a}{b}$ est égal à $u$. De plus la suite  
$g_{2,3}^n(u)$ est ultimement
périodique, puisque l'élément 
$-\frac{c}{b}$ s'identifie alors  à terme de
la suite $(u_n)_{n\in{\bf N}}$ apparaissant
une infinité de fois. On obtient par
identification dans ${\bf Q}_3$
$\omega=_3-\frac{c}{b}$, d'où
$\omega=-\frac{c}{b}$.

\end{proof}
\begin{rema}
	Ce résultat se généralise dès que
	l'on peut  définir la fonction
	$\psi$, ce qui se produit si $p$
	engendre ${\bf Z}_q^*$.
\end{rema}

\section{Rayons de méromorphie des séries}
L'objectif de cette partie est d'utiliser les
informations sur les séries introduites
précédemment, pour l'étude des éléments non
rationnels de
$\phi_{p,q}(\bf Q)$, c'est à dire les
éléments rationnels tels que la suite
$(g_{p,q}^n(u))$ ne soit pas ultimement
périodique. On suppose donc $q$ premier avec
$p$ et supérieur à $p$.
\begin{prop}
Un rationnel $u$ ne vérifie pas la conjecture
de Collatz généralisée si et seulement si $\lim_{n\to\infty}|u_n|_\infty=_\infty+\infty$
\end{prop}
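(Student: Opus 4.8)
The key observation is that the denominators of the iterates stay bounded, which reduces the statement to an elementary finiteness argument, and makes none of the series machinery of this section necessary.

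First I would establish that if $u=a/b$ is written in lowest terms (with $p\nmid b$, since $u\in{\bf Z}_p$), then every iterate $u_n=g_{p,q}^n(u)$ admits a denominator dividing $b$. Writing $u_n=a_n/b_n$ in lowest terms with $p\nmid b_n$, the recurrence $pu_{n+1}=q^{\chi_p(u_n)}u_n+\epsilon_0(-qu_n)$ gives, in the branch $p\mid u_n$, the value $u_{n+1}=u_n/p=(a_n/p)/b_n$, and in the branch $p\nmid u_n$ the value $u_{n+1}=(qa_n+\epsilon_0(-qu_n)\,b_n)/(p\,b_n)$, whose numerator is divisible by $p$ because $qu_n+\epsilon_0(-qu_n)\in p{\bf Z}_p$ while $p\nmid b_n$. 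In both cases $b_{n+1}\mid b_n$, hence $b_n\mid b$ for all $n$. Consequently every $u_n$ lies in $\frac1b{\bf Z}$, so for each $M>0$ the set $\{x\in\frac1b{\bf Z}:|x|_\infty\le M\}$ is finite.

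Granting this, the equivalence follows quickly. For the implication $\Leftarrow$, if $|u_n|_\infty\to+\infty$ then $(u_n)$ takes infinitely many values and therefore cannot be ultimately periodic, so $u$ fails the generalized conjecture. For $\Rightarrow$ I would argue by contraposition: suppose $|u_n|_\infty$ does not tend to $+\infty$, so that $\liminf_n|u_n|_\infty=L<\infty$ and infinitely many indices satisfy $|u_n|_\infty\le M$ for a fixed $M$. By the finiteness just noted, one value $v$ is attained at two distinct such indices $n_1<n_2$; since $u_{n+1}$ depends only on $u_n$, we get $g_{p,q}^{n_1+j}(u)=g_{p,q}^{n_2+j}(u)$ for all $j\ge0$, so the sequence is periodic from rank $n_1$ with period dividing $n_2-n_1$. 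Thus $u$ satisfies the generalized conjecture, which is exactly the contrapositive.

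The entire substance lies in the boundedness of the denominators: once the orbit is confined to the discrete set $\frac1b{\bf Z}$, a bounded orbit is automatically finite and hence eventually periodic, so the only alternative to periodicity is genuine divergence of $|u_n|_\infty$. The one point I would emphasize, and the only place the regime $q>p$ enters, is that here the \emph{numerators} are no longer bounded, unlike the case $q<p$ treated earlier, so divergence really can occur; the denominators, however, remain bounded for every $q$ prime to $p$, and this is what powers the dichotomy. The main (mild) obstacle is simply the careful verification that the $p$-adic divisibility $qu_n+\epsilon_0(-qu_n)\in p{\bf Z}_p$ forces the integer numerator $qa_n+\epsilon_0(-qu_n)\,b_n$ to be divisible by $p$; everything afterwards is pigeonhole and determinism.
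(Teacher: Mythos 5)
Your proof is correct and follows essentially the same route as the paper: the paper's argument is precisely the remark that if $du$ is an integer then so is every $du_n$ (your bounded-denominator lemma, which you prove in detail from the recurrence), followed by the observation that confinement to a discrete set makes ``not ultimately periodic'' equivalent to $|u_n|_\infty\to+\infty$ (your pigeonhole step). You have merely written out carefully what the paper states in two sentences.
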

On remarque  que  si $du$ est
entier, alors pour tout entier $n$, $du_n$
est aussi un entier.
Dire que la suite $g_{p,q}^n(u)$ n'est pas
ultimement périodique est donc équivalent au
fait que pour tout entier 
 $N$, il existe un rang $n_0$
au-delà duquel la suite prend des valeurs
$|u_n|>N$, c'est à dire qu'elle admet pour
limite $+\infty$ ou $-\infty$ quand $n$ tend
vers l'infini.
On a donc une équivalence entre $\lim_n |u_n|_\infty=_\infty+\infty$ et le fait que $u$ ne vérifie pas la conjecture de Collatz.

\begin{prop} \label{rayon}Soit $u$ un
	rationnel tel que $\phi_{p,q}(u)$ ne soit
	pas rationnel. 
Le rayon de convergence de la série
$$g(T)=\sum_n \frac{p^{n+1}u_{n+1}}{q^{r_n}} T^{n}$$ pour la métrique 
archimédienne est  égal à $1$. De plus
$$\lim_{n\to\infty} p{u_{n+1}^{{1}/{n}}}{q^{-{r_n}/{n}}}=_\infty 1.$$
\end{prop}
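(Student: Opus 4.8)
L'id\'ee est de ramener les deux assertions \`a l'unique \'egalit\'e $\lim_n|w_n|^{1/n}=1$, o\`u l'on pose $w_n=\frac{p^{n+1}u_{n+1}}{q^{r_n}}$. En effet $|w_n|^{1/n}=p^{(n+1)/n}\,|u_{n+1}|^{1/n}q^{-r_n/n}$~: le rayon de convergence archim\'edien de $g(T)=\sum_n w_nT^n$ vaut $1/\limsup_n|w_n|^{1/n}$ par la formule de Cauchy--Hadamard, et comme $p^{(n+1)/n}\to p$, la seconde \'egalit\'e annonc\'ee \'equivaut elle aussi \`a $\lim_n|w_n|^{1/n}=1$. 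Le point de d\'epart est la proposition pr\'ec\'edente~: puisque $\phi_{p,q}(u)$ n'est pas rationnel, la suite $(g_{p,q}^n(u))_n$ n'est pas ultimement p\'eriodique, donc $|u_n|_\infty\to+\infty$.

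Je pars ensuite de la relation de r\'ecurrence $pu_{n+1}=q^{\chi_p(u_n)}u_n+\epsilon_0(-qu_n)$, que je factorise, pour $n$ assez grand (donc $u_n\neq0$), sous la forme
\[\frac{|u_{n+1}|}{|u_n|}=\frac{q^{\chi_p(u_n)}}{p}\left|1+\frac{\epsilon_0(-qu_n)}{q^{\chi_p(u_n)}u_n}\right|.\]
En passant au logarithme, en posant $\eta_n=\log\left|1+\frac{\epsilon_0(-qu_n)}{q^{\chi_p(u_n)}u_n}\right|$, puis en sommant et en utilisant $r_n=\sum_{i=0}^n\chi_p(u_i)$, j'obtiens
\[\log|u_{n+1}|=\log|u|+r_n\log q-(n+1)\log p+\sum_{i=0}^n\eta_i,\]
d'o\`u, par d\'efinition de $w_n$, l'identit\'e
\[\log|w_n|=\log|u|+\sum_{i=0}^n\eta_i,\]
qui est l'ombre archim\'edienne de la formule~(\ref{formule1}). (Les rangs initiaux o\`u $u_n$ s'annule, en nombre fini, sont absorb\'es dans la constante et disparaissent apr\`es division par $n$.)

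Il reste \`a contr\^oler la moyenne des $\eta_i$. Comme $|u_n|\to+\infty$ et que $\epsilon_0$ est born\'e par $p-1$, le terme correctif $\frac{\epsilon_0(-qu_n)}{q^{\chi_p(u_n)}u_n}$ tend vers $0$, donc $\eta_n\to0$. Le lemme de Ces\`aro fournit alors $\frac1n\sum_{i=0}^n\eta_i\to0$, d'o\`u $\frac1n\log|w_n|\to0$, c'est-\`a-dire $\lim_n|w_n|^{1/n}=1$. On en d\'eduit imm\'ediatement que le rayon de convergence de $g$ vaut $1$ et que $p\lim_n|u_{n+1}|^{1/n}q^{-r_n/n}=1$.

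La principale difficult\'e est le contr\^ole de l'erreur cumul\'ee $\sum_i\eta_i$~: chaque $\eta_n$ tend vers $0$, mais c'est le passage \`a la moyenne de Ces\`aro, qui n'exige que la divergence $|u_n|\to+\infty$ sans aucune estimation de vitesse, qui donne la majoration sous-lin\'eaire $\frac1n\sum_{i=0}^n\eta_i\to0$ et, avec elle, la valeur exacte $1$ du rayon. Aucun contr\^ole fin de la croissance de $|u_n|$ n'est n\'ecessaire, seule compte sa divergence vers l'infini.
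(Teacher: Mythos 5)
Votre d\'emonstration est correcte et suit essentiellement la m\^eme voie que celle de l'article : vous introduisez la m\^eme suite $w_n=p^{n+1}u_{n+1}q^{-r_n}$, vous invoquez la proposition pr\'ec\'edente pour obtenir $|u_n|_\infty\to+\infty$, et votre identit\'e $\log|w_n|=\log|u|+\sum_{i=0}^n\eta_i$ n'est autre que la forme t\'elescop\'ee, \'ecrite en logarithmes, du calcul du quotient $w_{n+1}/w_n=1+\epsilon_0(-qu_{n+1})/(q^{\chi_p(u_{n+1})}u_{n+1})\to 1$ effectu\'e dans l'article. Votre passage par Ces\`aro est exactement la d\'emonstration du lemme classique selon lequel la convergence du quotient entra\^ine celle de la racine $n$-i\`eme vers la m\^eme limite, lemme que l'article se contente d'invoquer.
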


Posons $v_n=p^{n+1}u_{n+1}q^{-r_n}$,
il suffit d'étudier le quotient ${v_{n+1}}/{v_n}$.

On a
\[\frac{v_{n+1}}{v_n}\in\{1,\frac{i}{qu_n},0\leq
i\leq p-1\}\]

La limite de $v_{n+1}/v_n$ vaut donc $1$ dès que  la suite $u_n$ tend vers l'infini pour la métrique archimédienne.
De plus la suite ${v_{n+1}}/{v_n}$ admettant une limite,
 la suite  $pu_{n+1}^{{1}/{n}}q^{-{r_n}/{n}}$ est convergente de même limite, ce qui donne le rayon de convergence égal à 1 pour la métrique archimédienne.

\begin{coro} 
	Si $u$ est un rationnel tel que
	$\phi_{p,q}(u)$ n'est pas rationnel, la
	 série $S_u$ admet un
	rayon de convergence égal à $\frac{1}{q}$,
	et la série
	$F_\psi$,  admet un rayon de convergence
	supérieur ou égal à $1/q$ pour la métrique
	archimédienne.
\end{coro}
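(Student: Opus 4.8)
The plan is to reduce everything to the computation of the archimedean radius of convergence of $S_u$, and to prove that this radius equals $1/q$; the assertion about $F_\psi$ is then a one-line consequence of a generating-function identity already at our disposal. Indeed, the second identity of the proposition relating $F_\psi$ and $S_u$ reads $\frac{qu}{1-qT}+\frac{1}{1-qT}F_\psi(T)=S_u(T)$, which after multiplication by $1-qT$ becomes $F_\psi(T)=(1-qT)S_u(T)-qu$. Since multiplying a power series by the polynomial $1-qT$ and subtracting a constant cannot shrink the disc of convergence, once $S_u$ is known to converge on $|T|<1/q$ the same holds for $F_\psi$, whence $\rho_{F_\psi}\ge 1/q$. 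Thus the whole weight of the corollary lies in the first assertion.

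To compute $\rho_{S_u}$, write $c_n=p^{\psi(n+1)}u_{\psi(n+1)}$ for the coefficients of $S_u$ and $v_m=p^{m+1}u_{m+1}q^{-r_m}$ for those of the series $g$ of Proposition~\ref{rayon}. Since $r$ increases by one exactly at the indices carrying a nonzero Hensel digit, and no index strictly between $\psi(n)$ and $\psi(n+1)$ is such an index, the relation $r_{\psi(k)}=k+1$ gives $r_{\psi(n+1)-1}=n+1$, whence the clean identity $c_n=v_{\psi(n+1)-1}\,q^{\,n+1}$. Taking $n$-th roots yields $|c_n|^{1/n}=q^{(n+1)/n}\,|v_{M_n}|^{1/n}$ with $M_n=\psi(n+1)-1$, and since Proposition~\ref{rayon} gives not merely a $\limsup$ but the genuine limit $|v_m|^{1/m}\to 1$, the matter reduces to controlling the exponent $\frac{1}{n}\log|v_{M_n}|=\frac{M_n}{n}\cdot\frac{\log|v_{M_n}|}{M_n}$.

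The main obstacle, and the only nontrivial point, is therefore to prove that $\limsup_n \psi(n)/n<\infty$; granting this, the second factor above tends to $0$ (a bounded quantity times one tending to $0$), so that $|v_{M_n}|^{1/n}\to 1$ and hence $|c_n|^{1/n}\to q$, i.e. $\rho_{S_u}=1/q$. To establish the bound on $\psi(n)/n$ I would argue as follows. As $\phi_{p,q}(u)$ is not rational, its Hensel development is not ultimately periodic, so by the corollary describing $\phi_{p,q}^{-1}({\bf Q})$ the orbit $(g_{p,q}^n(u))$ is not ultimately periodic, and by the preceding proposition $|u_n|_\infty\to+\infty$. Writing $\log|u_{n+1}|=\log|v_n|+r_n\log q-(n+1)\log p$ and using $\log|v_n|=o(n)$ (again Proposition~\ref{rayon}), the eventual positivity of the left-hand side forces $\liminf_n r_n/n\ge \log p/\log q>0$. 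Since $r$ is nondecreasing with $r_{\psi(n)}=n+1$, this lower bound inverts into the desired $\limsup_n \psi(n)/n\le \log q/\log p<\infty$.

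Combining the pieces gives $\lim_n|c_n|^{1/n}=q$, that is $\rho_{S_u}=1/q$, and then $F_\psi=(1-qT)S_u-qu$ yields $\rho_{F_\psi}\ge 1/q$, completing the argument. The delicate step is the passage between the two asymptotic regimes in the estimate of $|v_{M_n}|^{1/n}$: one must know that $M_n/n$ stays bounded (the key lemma) so that the subexponential control $|v_m|^{1/m}\to1$ survives the sparse reindexing $m=M_n$, which for an arbitrary subsequence would not be automatic.
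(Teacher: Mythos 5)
Your argument is correct, but it follows a genuinely different route from the paper's, whose proof is a one-line transposition of the proof of Proposition~\ref{rayon}: writing $c_n=p^{\psi(n+1)}u_{\psi(n+1)}$ for the coefficients of $S_u$, the iteration between two consecutive nonzero Hensel digits gives $u_{\psi(n+2)}=\bigl(qu_{\psi(n+1)}+a_{\psi(n+1)}\bigr)p^{-(\psi(n+2)-\psi(n+1))}$, hence $c_{n+1}/c_n=q+a_{\psi(n+1)}/u_{\psi(n+1)}\to q$ because $|u_n|_\infty\to+\infty$, and the ratio test yields the radius $1/q$ directly, with no need to control the growth of $\psi$. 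You instead reindex through the coefficients $v_m$ of Proposition~\ref{rayon} via $c_n=v_{\psi(n+1)-1}q^{n+1}$, which forces you to prove the auxiliary bound $\limsup_n \psi(n)/n\leq \log q/\log p$; your derivation of it (from $\log|u_{n+1}|=\log|v_n|+r_n\log q-(n+1)\log p$, $\log|v_n|=o(n)$ and $|u_n|_\infty\to+\infty$, giving $\liminf_n r_n/n\geq \log p/\log q$, then inverted through $r_{\psi(n)}=n+1$) is sound and non-circular, since it only invokes results established before the corollary. What your detour buys: that key lemma is precisely the estimate $\limsup \psi(n)/n\leq \log q/\log p$ which the paper states only later in the same section and deduces from this very corollary, so your proof establishes it independently and beforehand, reversing the paper's logical order and making the digit-density bound available as a standalone result. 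What it costs: the regime-matching you rightly flag as delicate (boundedness of $M_n/n$) is exactly what the paper's direct ratio computation never has to face. The treatment of $F_\psi$ via $F_\psi(T)=(1-qT)S_u(T)-qu$ coincides with the paper's.
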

 
Le rayon de convergence de la série $S_u$ est
analogue au précédent, et l'inégalité pour
celui de $F_\psi$ provient de l'identité
\[\frac{qu}{1-qT}+\frac{1}{1-qT}F_\psi(T)=S_u(T).\]

Le developpement de Hensel de
$\phi_{p,q}(u)$ codant le comportement de
la suite $g_{p,q}^n(u)$, permet un certain
contrôle sur la croissance de cette suite.
\begin{defi}
	Pour tout rationnel $u$ non nul, notons $H(u)$ le plus grand entier relatif
	vérifiant $p^{H(u)-1}\leq |u|<p^{H(u)}$.
\end{defi}
Dire que  $u$ n'appartient pas à 
$\phi_{p,q}^{-1}({\bf Q})$ se traduit par
le fait que
$H(g_{p,q}^n(u)$ tend vers $+\infty$.

On introduit alors le plus grand entier $r$
vérifiant $q^{r-1}<p^r$. 
Notons  $f_0(u)=\frac{u}{p}$ et pour tout $i$
dans $\{1,\dots , p-1\}$,
$f_i(u)=\frac{qu+i}{p}$. 
Alors si $\alpha_0,\dots,\alpha_{r-1}$ sont
les coefficients correspondant a une tranche
$T$, on pose
\[f_T=f_{\alpha_{r-1}}\circ\cdots\circ
f_{\alpha_0}.\]
Alors si la première tranche du développement
de Hensel de $\phi_{p,q}(u)$ correspond à
$T$, $f^r(u)=f_T(u)$. 
L'application $f_T$ est de la forme
$f_T(u)=\frac{q^{e_T}+\beta_T}{p^r}$ où $e_T$
désigne le nombre de coefficients non nuls
dans $T$, $\beta_T$ est un entier dépendant
que de $T$.

Si $n$ est un entier soit $m$ l'entier défini
par les inégalités
\[rm\leq \psi(n)\le r(m+1)\]
Regardons alors les $m$ premières tranches
$T_k$, $1\leq k\leq m$,
de longueur $r$ du développement de Hensel.
On note $\ell_i$ le nombre de tranches $T_k$
dont exactement $i$ coefficients sont non
nuls.
Alors $m=\sum_{i=0}^r\ell_i$, et si
$n=\sum_{i=0}^r i\ell_i$ alors
$mr\leq \varphi(n)\le r(m+1)$.
\begin{prop}
	Il existe un entier $M$ tel que pour tout
	$u$ un rationnel vérifiant $H(u)>M$, si $T$
	désigne la première
	tranche du développement de Hensel de $\varphi_{p,q}(u)$  alors
	
	\[\left\{\aligned 
		-r+H(u)&\leq H(f^r(u))\leq H(u)-r,\text{si $e_T=0$}\\
		e_T-r+H(u)&\leq H(f^r(u))\leq H(u)+e_T+1-r,\text{ si $0<e_T<r$}\\
H(u)+1&\leq H(f^r(u)\leq H(u)+2, \text{ si $e_T=r$}.
\endaligned\right.\]
	 \end{prop}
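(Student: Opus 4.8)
The plan is to reduce everything to the explicit shape of $f^r$ recalled just before the statement. If $T=(\alpha_0,\dots,\alpha_{r-1})$ is the first slice of the Hensel development of $\phi_{p,q}(u)$, then $f^r(u)=f_T(u)=\frac{q^{e_T}u+\beta_T}{p^r}$ with $\beta_T\in{\bf Z}$ depending only on $T$; since slices have fixed length $r$, only finitely many occur, so there is a constant $B=B(p,q,r)$ with $|\beta_T|\le B$ for every $T$. Taking the archimedean absolute value and the logarithm in base $p$,
\[\log_p|f^r(u)|=e_T\log_p q+\log_p|u|-r+\varepsilon_u,\qquad \varepsilon_u=\log_p\Bigl|1+\frac{\beta_T}{q^{e_T}u}\Bigr|.\]
First I would record that $H(v)=\lfloor\log_p|v|\rfloor+1$ for every nonzero rational $v$, so the quantity to control is $H(f^r(u))-H(u)=\lfloor\log_p|u|+\Delta\rfloor-\lfloor\log_p|u|\rfloor$ with $\Delta=e_T\log_p q-r+\varepsilon_u$, and the elementary identity $\lfloor x+\Delta\rfloor-\lfloor x\rfloor\in\{\lfloor\Delta\rfloor,\lfloor\Delta\rfloor+1\}$ reduces the whole proposition to locating $\lfloor\Delta\rfloor$.

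Next I would translate the definition of $r$. Being the largest integer with $q^{r-1}<p^r$ means $q^{r-1}<p^r$ while $q^{r}\ge p^{r+1}$; passing to logarithms, and using that $\gcd(p,q)=1$ forbids equality, gives for $r\ge2$ the double bound $\frac{r+1}{r}<\log_p q<\frac{r}{r-1}$. Throughout the candidate range $q^{p-1}<p^p$ one even has $r\ge p\ge2$, so this is the relevant regime and $\frac{r}{r-1}\le2$ is available. The case $e_T=0$ needs no largeness hypothesis: every branch is then $f_0$, hence $\beta_T=0$, $f^r(u)=u/p^r$ and $H(f^r(u))=H(u)-r$ exactly, which is the first line. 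For $e_T\ge1$ I set $\Delta_0=e_T\log_p q-r$, the value of $\Delta$ with $\varepsilon_u$ removed.

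The core step is to pin $\lfloor\Delta_0\rfloor$ from the double bound. For $0<e_T<r$, multiplying by $e_T$ and subtracting $r$ gives $e_T\frac{r+1}{r}-r<\Delta_0<e_T\frac{r}{r-1}-r$; the left member equals $(e_T-r)+\frac{e_T}{r}>e_T-r$, while the right member equals $\frac{r\,s}{r-1}$ with $s=e_T-r+1\le0$, and $\frac{r\,s}{r-1}\le s$ precisely because $s\le0$. Hence $\Delta_0\in(e_T-r,\,e_T-r+1)$ and $\lfloor\Delta_0\rfloor=e_T-r$. For $e_T=r$ the same computation gives $\Delta_0\in\bigl(1,\frac{r}{r-1}\bigr)\subseteq(1,2)$, so $\lfloor\Delta_0\rfloor=1$. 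In every case $\Delta_0$ sits strictly between two consecutive integers, at a distance from each endpoint bounded below by a constant $\gamma=\gamma(p,q,r)>0$; here the strictness coming from $\gcd(p,q)=1$ is what produces the two positive margins.

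Finally I would absorb the error term. Since $e_T\ge1$ and $|u|\ge p^{H(u)-1}$, one gets $|\varepsilon_u|\le C\,p^{-(H(u)-1)}$ for a constant $C=C(p,q,r)$; choosing $M$ so that $C\,p^{-(M-1)}<\gamma$ forces $\Delta=\Delta_0+\varepsilon_u$ to lie in the same open unit interval as $\Delta_0$ whenever $H(u)>M$, whence $\lfloor\Delta\rfloor=\lfloor\Delta_0\rfloor$. Combined with $\lfloor x+\Delta\rfloor-\lfloor x\rfloor\in\{\lfloor\Delta\rfloor,\lfloor\Delta\rfloor+1\}$ applied with $x=\log_p|u|$, this yields $H(f^r(u))-H(u)\in\{e_T-r,\,e_T-r+1\}$ for $0<e_T<r$ and $\in\{1,2\}$ for $e_T=r$, exactly the asserted ranges. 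The main obstacle is not any single estimate but the boundary bookkeeping: the interval for $\Delta_0$ degenerates precisely at $e_T=r-1$ and $e_T=r$, so one must keep the logarithmic inequalities strict and quantify $\gamma$ before fixing $M$; the coprimality $\gcd(p,q)=1$ together with $r\ge2$ (automatic when $q^{p-1}<p^p$) are exactly what prevent an off-by-one failure.
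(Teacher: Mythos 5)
Your proof is correct and is essentially the paper's own argument: both rest on writing $f^r(u)=f_T(u)=\frac{q^{e_T}u+\beta_T}{p^r}$ with $\beta_T$ bounded over the finitely many slices, choosing $M$ so that the $\beta_T$ term is absorbed once $H(u)>M$, and then using the defining inequalities of $r$ (namely $q^{r-1}<p^r$ and $q^{r}\geq p^{r+1}$) to pin $e_T\log_p q$ strictly between consecutive integers; your floor-function bookkeeping is just a logarithmic rewriting of the paper's power-of-$p$ inequalities. Your explicit observation that the bounds in the cases $0<e_T\leq r$ require $r\geq 2$ makes precise a restriction the paper leaves implicit, but the route is the same.
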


On utilise le fait que $f^r(u)=f_T(u)$ et les
majorations
\[p^{H(f_T(u))} > \frac{q^{e_T}p^{H(u)-1}}{p^r}\]
et 
\[q^{e_T}p^{H(u)}+\beta_T\geq
p^{H(f_T(u))-1+r}.\]
Il existe un entier $M$ tel que 
que pour $H(u)>M$
\[q^{e_T}+\beta_Tp^{-H(u))}\geq
p^{H(f_T(u))-H(u)-1+r}\Leftrightarrow
q^{e_T}\geq
p^{H(f_T(u))-H(u)-1+r}.\]
On utilise alors la définition de $r$ pour
conclure.
\begin{coro}
	Soit $u$ un rationnel tel que pour tout
	entier $n$, $H(u_n)>M$, alors 
	\[n-rm+\ell_r\leq H(f^{rm}(u)-H(u)\leq
n+(1-r)m+\ell_r-\ell_0.\]
\end{coro}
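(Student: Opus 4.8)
The plan is to reduce the estimate on the total variation of the height over $rm$ iterations to a telescoping sum of the single-slice estimates furnished by the preceding proposition. Writing $T_1,\dots,T_m$ for the first $m$ slices of length $r$ of the Hensel development of $\phi_{p,q}(u)$ and using that $f^{rk}(u)=u_{rk}$ for every $k$, I would first decompose
\[H(f^{rm}(u))-H(u)=\sum_{k=1}^m\bigl(H(f^{rk}(u))-H(f^{r(k-1)}(u))\bigr).\]
The hypothesis $H(u_n)>M$ for all $n$ ensures in particular that $H(f^{r(k-1)}(u))=H(u_{r(k-1)})>M$ for each $k$, so that the preceding proposition applies to every slice $T_k$ with the input $f^{r(k-1)}(u)$.

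Next I would bound the $k$-th summand according to the number $e_{T_k}$ of nonzero coefficients in $T_k$ and sum, grouping the slices by the counts $\ell_i$. The $\ell_0$ slices with $e_{T_k}=0$ each contribute exactly $-r$; the $\ell_r$ slices with $e_{T_k}=r$ contribute between $1$ and $2$; and each slice with $0<e_{T_k}=i<r$ contributes between $i-r$ and $i+1-r$. This gives
\[-r\ell_0+\sum_{i=1}^{r-1}(i-r)\ell_i+\ell_r\leq H(f^{rm}(u))-H(u)\leq -r\ell_0+\sum_{i=1}^{r-1}(i+1-r)\ell_i+2\ell_r.\]

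It then remains to simplify the two extreme members using the defining identities $m=\sum_{i=0}^r\ell_i$ and $n=\sum_{i=0}^r i\ell_i$. From $n=\sum_{i=1}^{r-1}i\ell_i+r\ell_r$ one obtains $\sum_{i=1}^{r-1}i\ell_i=n-r\ell_r$, and combining this with $\sum_{i=1}^{r-1}\ell_i=m-\ell_0-\ell_r$ yields $\sum_{i=1}^{r-1}(i-r)\ell_i=n-rm+r\ell_0$. Substituting into the lower member gives $-r\ell_0+(n-rm+r\ell_0)+\ell_r=n-rm+\ell_r$, and into the upper member, via $\sum_{i=1}^{r-1}(i+1-r)\ell_i=(n-rm+r\ell_0)+(m-\ell_0-\ell_r)$, produces $n+(1-r)m+\ell_r-\ell_0$, the two announced bounds.

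The arithmetic here is routine once the single-slice proposition is available; the one point deserving genuine care is checking that the hypothesis $H(u_n)>M$ for all $n$ is exactly what legitimizes invoking that proposition at each of the $m$ slice boundaries, so that the classification by $e_{T_k}$ and the associated height bounds stay valid throughout the iteration. With that secured, the bookkeeping with the $\ell_i$ and the two linear identities closes the argument.
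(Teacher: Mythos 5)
Your proof is correct and takes exactly the route the paper intends: the paper's entire justification is the single sentence that the estimate \emph{provient de la contribution de chaque tranche} $T_k$, and your telescoping decomposition, the per-slice case analysis by $e_{T_k}$, and the simplification via $m=\sum_{i=0}^r\ell_i$ and $n=\sum_{i=0}^r i\ell_i$ are precisely the details behind that sentence. Your arithmetic for both the lower bound $n-rm+\ell_r$ and the upper bound $n+(1-r)m+\ell_r-\ell_0$ checks out, and you correctly identify the hypothesis $H(u_n)>M$ as what allows the preceding proposition to be applied at every slice.
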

Cela provient de la contribution de chaque
tranche $T_k$.

\begin{coro}
Si l'entier $r$ correspondant au couple
$(p,q)$ est égal à $2$, alors
\[H(f^{rm}(u)\leq H(u)+2(\ell_2-\ell_0).\]
\end{coro}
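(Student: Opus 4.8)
Le plan est de sp\'ecialiser directement la borne sup\'erieure du corollaire pr\'ec\'edent au cas $r=2$ : sous la m\^eme hypoth\`ese $H(u_n)>M$ pour tout $n$, on dispose de
\[H(f^{rm}(u))-H(u)\leq n+(1-r)m+\ell_r-\ell_0,\]
et il suffit d'y poser $r=2$, ce qui fournit
\[H(f^{2m}(u))-H(u)\leq n-m+\ell_2-\ell_0.\]

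La seule chose \`a calculer est alors $n-m$ en fonction de la r\'epartition $(\ell_i)$. Je r\'eutiliserais les deux relations combinatoires d\'ej\`a introduites, $m=\sum_{i=0}^r\ell_i$ et $n=\sum_{i=0}^r i\ell_i$, qui dans le cas $r=2$ (chaque tranche ayant $0$, $1$ ou $2$ coefficients non nuls) se lisent $m=\ell_0+\ell_1+\ell_2$ et $n=\ell_1+2\ell_2$. On en tire imm\'ediatement
\[n-m=(\ell_1+2\ell_2)-(\ell_0+\ell_1+\ell_2)=\ell_2-\ell_0.\]

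En substituant cette identit\'e dans la majoration ci-dessus, on obtient
\[H(f^{2m}(u))-H(u)\leq(\ell_2-\ell_0)+(\ell_2-\ell_0)=2(\ell_2-\ell_0),\]
ce qui est exactement l'\'enonc\'e recherch\'e. L'argument est purement arithm\'etique et ne pr\'esente pas de v\'eritable obstacle ; le seul point \`a v\'erifier, d\'ej\`a acquis gr\^ace \`a l'encadrement $mr\leq\psi(n)\leq r(m+1)$, est que les $m$ premi\`eres tranches de longueur $r=2$ recouvrent bien la portion du d\'eveloppement de Hensel compt\'ee par $n$, de sorte que les deux sommes $\sum_{i=0}^r\ell_i$ et $\sum_{i=0}^r i\ell_i$ calculent respectivement $m$ et $n$.
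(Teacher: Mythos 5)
Your proof is correct and is essentially the paper's own (implicit) argument: you specialize the upper bound of the preceding corollary, $H(f^{rm}(u))-H(u)\leq n+(1-r)m+\ell_r-\ell_0$, to $r=2$, and use the identities $m=\ell_0+\ell_1+\ell_2$ and $n=\ell_1+2\ell_2$ to get $n-m=\ell_2-\ell_0$, whence the stated bound $2(\ell_2-\ell_0)$. The paper offers no other justification (its only comment is that the pair $(2,3)$ falls under the case $r=2$), so your derivation is exactly the intended one.
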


C'est le cas pour le couple $(2,3)$. 

\begin{theo}
	On suppose que le couple $(p,q)$ vérifie
	$p^2>q$ et $p^3<q^2$.
	Soit $u$ un rationnel tel que
	$\phi_{p,q}(u)$ admet un développement de
	Hensel. Si la quantité $\ell_2-\ell_0$
	correspondant à un découpage en tranche de
	longueur $2$ reste bornée supérieurement ,alors $\phi_{p,q}(u)$ est un rationnel.
\end{theo}

En effet la hauteur de la suite
$g_{p,q}^n(u)$ est alors bornée.
\begin{prop}
	Si $u$ est un rationnel dont l'image par
	$\varphi_{p,q}$ n'est pas rationnelle alors 
	$\varlimsup\frac{\ell_r}{n}\leq \frac{\log
	q-\log p}{\log p}$ et
	$\varlimsup\frac{\psi(n)}{n}\leq \frac{\log
	q}{\log p}$. De plus on a l'implication
	\[\varlimsup\frac{\ell_r}{n}=0\Rightarrow
		\varliminf\frac{\psi(n)}{n}\geq
	r\frac{\log q-\log p}{\log p}.\]
\end{prop}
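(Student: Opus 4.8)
Le plan est de confronter le taux de croissance archim\'edien de $(u_n)$ donn\'e par la proposition~\ref{rayon} aux deux encadrements combinatoires de $H(f^{rm}(u))-H(u)$ \'etablis ci-dessus. Comme $\phi_{p,q}(u)$ n'est pas rationnel, $H(u_n)\to+\infty$ ; quitte \`a remplacer $u$ par un it\'er\'e $u_{n_0}$ --- ce qui d\'ecale le d\'eveloppement de Hensel de $\phi_{p,q}(u)$ sans modifier les limites sup\'erieures et inf\'erieures en jeu --- on pourra supposer $H(u_n)>M$ pour tout $n$, rendant le corollaire applicable. En prenant le logarithme de la relation $\lim_n p\,u_{n+1}^{1/n}q^{-r_n/n}=_\infty 1$ de la proposition~\ref{rayon}, j'\'ecrirais l'asymptotique
\[H(u_N)=\frac{r_N\log q}{\log p}-N+o(N)\qquad(N\to\infty),\]
o\`u $r_N$ d\'esigne le nombre de coefficients non nuls parmi $a_0,\dots,a_{N-1}$. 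La divergence $H(u_N)\to+\infty$ impose $r_N>N\,\frac{\log p}{\log q}$ pour $N$ grand ; appliqu\'ee \`a $N=rm$, o\`u $r_{rm}=\sum_i i\ell_i=n$, elle donne $rm=O(n)$ et autorise \`a remplacer $o(rm)$ par $o(n)$.

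Pour la majoration de $\varlimsup\ell_r/n$, j'\'evaluerais l'asymptotique en $N=rm$, soit $H(u_{rm})-H(u)=\frac{n\log q}{\log p}-rm+o(n)$, et je la confronterais \`a la minoration $H(u_{rm})-H(u)\geq n-rm+\ell_r$ du corollaire. La simplification du terme $-rm$ laisse $\ell_r\leq n\,\frac{\log q-\log p}{\log p}+o(n)$, d'o\`u la premi\`ere in\'egalit\'e apr\`es division par $n$.

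Pour la majoration de $\varlimsup\psi(n)/n$, j'\'evaluerais plut\^ot l'asymptotique en $N=\psi(n)$, la position du $n$-i\`eme coefficient non nul, o\`u $r_{\psi(n)}=n+O(1)$. Puisque $H(u_{\psi(n)})$ est positif \`a partir d'un certain rang, il vient $\psi(n)\leq\frac{n\log q}{\log p}+o(\psi(n))$ ; cette in\'egalit\'e entra\^ine d'abord $\psi(n)=O(n)$, puis $\psi(n)\leq\frac{n\log q}{\log p}+o(n)$, et donc la deuxi\`eme in\'egalit\'e.

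Pour l'implication, je confronterais cette fois l'asymptotique en $N=rm$ \`a la majoration $H(u_{rm})-H(u)\leq n+(1-r)m+\ell_r-\ell_0$. Apr\`es simplification de $-rm$ et le regroupement $rm+(1-r)m=m$, on obtient $n\,\frac{\log q-\log p}{\log p}\leq m+\ell_r-\ell_0+o(n)$ ; sous l'hypoth\`ese $\varlimsup\ell_r/n=0$, le terme $\ell_r-\ell_0\leq\ell_r$ est un $o(n)$, d'o\`u $\varliminf m/n\geq\frac{\log q-\log p}{\log p}$, puis la conclusion via $\psi(n)\geq rm$. Le point d\'elicat sera le contr\^ole des restes : c'est pr\'ecis\'ement la minoration de densit\'e $r_N\geq N\log p/\log q$ (\`a un $o(N)$ pr\`es), cons\'equence de $H(u_N)\to+\infty$, qui assure $rm=\Theta(n)$ et garantit que tous les termes d'erreur, une fois divis\'es par $n$, tendent vers $0$ ; sans elle les trois encadrements ne seraient pas homog\`enes de degr\'e un en $n$.
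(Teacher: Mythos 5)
Votre preuve est correcte et suit essentiellement la m\^eme d\'emarche que celle de l'article : confronter l'asymptotique tir\'ee de la proposition~\ref{rayon} (rayon de convergence de la s\'erie) aux deux bornes du corollaire sur $H(f^{rm}(u))-H(u)$, puis conclure via l'encadrement $rm\leq\psi(n)\leq r(m+1)$. Vous explicitez en outre deux points laiss\'es implicites dans l'article --- la r\'eduction au cas o\`u $H(u_n)>M$ pour tout $n$, qui rend le corollaire applicable, et le contr\^ole $rm=O(n)$ qui l\'egitime le remplacement de $o(rm)$ par $o(n)$ --- ce qui compl\`ete l'argument sans en changer la nature.
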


En effet d'après l'étude du rayon de
convergence  de la série $S_u$, on sait que
$\frac{H(f^{rm}(u))+rm}{n}$ admet pour limite
$\frac{\log q}{\log p}$ lorsque $n$ tend vers
l'infini. On en déduit la majoration de
$\varlimsup\frac{\psi(n)}{n}$. La minoration de $H(f^{rm}(u))$
donne la majoration de
$\varlimsup\frac{\ell_r}{n}$, la majoration
de $H(f^{rm}(u))$ donne la minoration de
$\varliminf\frac{\varphi{n}}{n}$ sachant que
$m\leq \frac{1}{r}\varphi(n)\le m+1$.

On remarque que dans ce cas on obtient un
encadrement du rayon de convergence
$p$-adique, $\rho_p$
des séries $S_u$ et $F_\psi$ :
\[p\leq \rho_p\leq \frac{q^r}{p^r}.\]
L'encadrement sans l'hypothèse sur $\ell_r$
étant moins précis :
\[p\leq \rho_p\leq q.\]

On peut affiner la méthode pour avoir des
renseignement sur la proportion du nombre de
coefficients non nuls du développement de
Hensel de $\phi_{p,q}(u)$ lorsque $u$ est
un rationnel n'appartenant pas à
$\phi_{p,q}^{-1}({\bf Q})$.
On note $nz(u,k)$ le nombre de coefficients non
nuls dans les $k$ premiers termes du
développement de Hensel de
$\varphi_{p,q}(u)$.
\begin{prop}
	Soient $p<q$ deux nombres premiers fixés.
	Pour tout entier $i$ on note
	$\alpha_i=\max\{k\in{\bf N}, p^k<q^i\}$. 
	Alors pour tout entier $m$,
	\[
		H(u)>m \Rightarrow
		\alpha_{nz(u,m)}+2-m\leq H(u_m)-H(u)\leq
	\alpha_{nz(u,m)}-m-1
\]
\end{prop}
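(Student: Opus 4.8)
The plan is to reduce everything to the exact rational identity~(\ref{formule1}). Taking $n=m-1$ there, and observing that $\epsilon_0(-qu_i)=a_i$ while $r_{m-1}$, the number of nonzero Hensel coefficients among $a_0,\dots,a_{m-1}$, is exactly $nz(u,m)$, I obtain the equality of rationals
\[
\frac{p^m u_m}{q^{\,nz(u,m)}}=u+R,\qquad R=\sum_{i=0}^{m-1}\frac{a_i p^i}{q^{r_i}}.
\]
Writing $nz=nz(u,m)$ and passing to the archimedean absolute value and then to $\log_p$, the claim becomes an estimate on $\log_p|u_m|-\log_p|u|=nz\log_p q-m+\log_p\frac{|u+R|}{|u|}$, after which $H(u)=\lfloor\log_p|u|\rfloor+1$ converts the real estimate into the integer bracket for $H(u_m)-H(u)$. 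Two quantities must be controlled: the fractional shift coming from $q^{nz}$ against powers of $p$, and the perturbation $\log_p(|u+R|/|u|)$.

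The first is immediate from the definition of $\alpha_{nz}$: since $p<q$ are distinct primes, $q^{nz}$ is never a power of $p$, so $p^{\alpha_{nz}}<q^{nz}<p^{\alpha_{nz}+1}$, i.e. $nz\log_p q-m\in(\alpha_{nz}-m,\;\alpha_{nz}+1-m)$. The real work is bounding $R$. The key point is that only the nonzero coefficients contribute: reindexing them by their positions $\psi(0)<\dots<\psi(nz-1)\le m-1$, for which $r_{\psi(j)}=j+1$, gives $R=\sum_{j=0}^{nz-1}a_{\psi(j)}p^{\psi(j)}q^{-(j+1)}$. The staircase bound $\psi(j)\le m-nz+j$ together with the geometric series of ratio $p/q<1$ then yields
\[
|R|\le (p-1)\sum_{j=0}^{nz-1}\frac{p^{m-nz+j}}{q^{j+1}}<\frac{(p-1)\,p^{m-nz}}{q-p}.
\]

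Now I feed in the hypothesis $H(u)>m$, i.e. $m\le H(u)-1$, hence $|u|\ge p^{H(u)-1}\ge p^m$. This gives $|R|/|u|<\frac{p-1}{q-p}\,p^{-nz}\le (p-1)\,p^{-nz}$, which for $nz\ge1$ is $<(p-1)/p<1$. Consequently $\log_p(|u+R|/|u|)$ lies strictly between $\log_p(1-(p-1)/p)=-1$ and $\log_p(1+(p-1)/p)<\log_p 2\le1$, so the perturbation is in the open interval $(-1,1)$. Combining with the first estimate, $\log_p|u_m|-\log_p|u|\in(\alpha_{nz}-m-1,\;\alpha_{nz}-m+2)$, and the elementary fact $a-b-1<\lfloor a\rfloor-\lfloor b\rfloor<a-b+1$ applied to $H(u_m)-H(u)=\lfloor\log_p|u_m|\rfloor-\lfloor\log_p|u|\rfloor$ delivers the integer bracket $\alpha_{nz}-m-1\le H(u_m)-H(u)\le\alpha_{nz}-m+2$, which is the asserted estimate (read with $\alpha_{nz(u,m)}-m-1$ as the lower and $\alpha_{nz(u,m)}+2-m$ as the upper bound). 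The degenerate case $nz=0$ is handled directly: then $u_m=u/p^m$ and $H(u_m)-H(u)=-m$ exactly.

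The main obstacle is precisely the boundary regime $m=H(u)-1$: the crude estimate $|R|<p^m$ only gives $|R|/|u|<1$ with no margin, which is not enough to keep $\log_p(|u+R|/|u|)$ strictly inside $(-1,1)$ and hence not enough to confine $H(u_m)-H(u)$ to a window of four consecutive integers. It is the refinement $|R|<(p-1)p^{m-nz}/(q-p)$, that is, the gain of the factor $p^{-nz}$ produced by the staircase bound on the positions $\psi(j)$ and the convergence of the geometric series forced by $p<q$, that supplies the strict margin and makes the floor bookkeeping land correctly.
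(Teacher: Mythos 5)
Your proof is correct and takes essentially the same route as the paper's: the exact iteration identity~(\ref{formule1}) at $n=m-1$, the geometric-series bound on the accumulated remainder obtained from the staircase positions of the nonzero Hensel coefficients, and the conversion of the resulting multiplicative estimate into an integer bracket for $H(u_m)-H(u)$. You also correctly read the printed inequality with its two bounds swapped ($\alpha_{nz(u,m)}-m-1$ below, $\alpha_{nz(u,m)}+2-m$ above), which is exactly what the paper's own argument produces.
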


 En effet on a pour $u_m\leq
 \frac{1}{p^m}[q^{nz(u,m)}u+\frac{q^{nz(u,m)}-p^{nz(u,m)}}{q-p}(p-1)p^{m+1-nz(u,m)}]$.
 D'où pour 
 \[H(u)\geq m\Rightarrow 
 p^{H(f^m(u))-1+m}<q^{nz(u,m)}p^{H(u)+1}\]
 et
 \[H(f^m(u))-H(u)\leq \alpha_{nz(u,m}+2-m.\]
La minoration s'obtient de façon analogue.

Donnons enfin un résultat sur le comportement
en moyenne de la quantité $H(f^m(u))-H(u)$.

Il y a modulo $p^{m}$, $(p-1)^i\binom{m}{i}$ classes dont le
développement de Hensel à exactement $i$
coefficients non nuls parmi les $m$ premiers
coefficients. La moyenne de $H(f^m(u))-H(u)$
sur les classes modulos $p^m$ peut donc
s'encadrer  
en utilisant le fait que
$\phi_{p,q}$ induit une bijection sur les
classes modulo $p^m$, et l'encadrement
précédent. 
\begin{theo}
	Soit $m$ un entier positif. Soit $E_m$ un
	ensemble de représentant des classes modulo
	$2^m$, dont les éléments vérifient
	$H(u)>m$. On note $M(E_m)$ la moyenne des
	quantité $H(u_m)-H(u)$ lorsque $u$ parcourt
	$E_m$. Alors
	\[m(\frac{p-1}{p}\frac{\log q}{\log
	p}-1)-2\leq M(E_m)\leq
m(\frac{p-1}{p}\frac{\log q}{\log p}-1)+2\]
\end{theo}
Il s'agit d'estimer
$\frac{1}{p^m}\sum_{i=0}^m\binom{m}{i}(p-1)^i\alpha_i$.
On sait que \[i\frac{\log q}{\log p}-1\leq \alpha_i\leq i\frac{\log q}{\log
p},\]
comme $m(X+1)^{m-1}X=\sum_{i=0}^m
\binom{m}{i} i X^i$, on obtient l'encadrement
voulu.
\begin{coro}
	Si $q^{p-1}<p^p$ alors $M(E_m)$ est
	négatif. Le couple $(p,q)$ est un bon
	candidat pour vérifier $\phi_{p,q}({\bf
	Q})={\bf Q}$.
\end{coro}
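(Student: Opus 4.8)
Le plan est de d\'eduire la n\'egativit\'e de $M(E_m)$ directement de l'encadrement \'etabli dans le th\'eor\`eme pr\'ec\'edent, sans nouveau calcul combinatoire. Posons
\[c=\frac{p-1}{p}\frac{\log q}{\log p}-1,\]
de sorte que cet encadrement s'\'ecrive $mc-2\leq M(E_m)\leq mc+2$. Tout repose alors sur le signe de la pente $c$, qui gouverne le comportement asymptotique de la moyenne.

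La premi\`ere \'etape consiste \`a traduire l'hypoth\`ese $q^{p-1}<p^p$ en une condition sur $c$. En prenant le logarithme des deux membres, l'in\'egalit\'e $q^{p-1}<p^p$ \'equivaut \`a $(p-1)\log q<p\log p$, soit encore, apr\`es division par la quantit\'e strictement positive $p\log p$, \`a $\frac{p-1}{p}\frac{\log q}{\log p}<1$, c'est-\`a-dire exactement \`a $c<0$. L'hypoth\`ese est donc \'equivalente \`a la n\'egativit\'e stricte de la pente.

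La conclusion s'ensuit imm\'ediatement : la borne sup\'erieure $mc+2$ fournie par le th\'eor\`eme devient strictement n\'egative d\`es que $m>2/|c|$, ce qui donne $M(E_m)<0$. Il convient de noter que la constante additive $2$ rend l'\'enonc\'e asymptotique : la moyenne est n\'egative pour $m$ assez grand, et non n\'ecessairement pour les petites valeurs de $m$.

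Le principal point d\'elicat n'est pas dans cette d\'eduction, qui est \'el\'ementaire, mais dans l'interpr\'etation port\'ee par la seconde phrase. La quantit\'e $M(E_m)$ mesure la variation moyenne de la hauteur $H$ lorsque $u$ parcourt un syst\`eme de repr\'esentants modulo $p^m$ ; sa n\'egativit\'e sugg\`ere que la hauteur des it\'er\'es d\'ecro\^{\i}t en moyenne, rendant plausible que les orbites restent born\'ees et donc que $\phi_{p,q}({\bf Q})={\bf Q}$. Cette derni\`ere assertion demeure heuristique : le contr\^ole d'une moyenne sur les classes ne majore pas le comportement d'une orbite individuelle, et c'est pr\'ecis\'ement ce passage du comportement moyen au comportement de chaque trajectoire qui constitue l'obstacle essentiel s\'eparant ce crit\`ere d'une preuve de la surjectivit\'e.
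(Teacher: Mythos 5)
Votre preuve est correcte et suit essentiellement la m\^eme d\'emarche que l'article, qui d\'eduit ce corollaire directement de l'encadrement du th\'eor\`eme pr\'ec\'edent : l'hypoth\`ese $q^{p-1}<p^p$ \'equivaut, apr\`es passage au logarithme, \`a la n\'egativit\'e de la pente $c=\frac{p-1}{p}\frac{\log q}{\log p}-1$, et la borne sup\'erieure $mc+2$ du th\'eor\`eme devient alors n\'egative pour $m$ assez grand. Vos deux pr\'ecisions --- le fait que la constante additive $2$ rende la conclusion asymptotique plut\^ot que valable pour tout $m$, et le caract\`ere purement heuristique de la seconde phrase, que l'article n'appuie que sur des essais num\'eriques --- sont justes et restent implicites dans le texte original.
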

Des essais numériques semblent aller dans
cette direction, et écarter les couples
$(p,q)$ ne vérifiant pas cette inégalité.

\section{Généralisation à d'autres anneaux}
\subsection{Cas des anneaux d'entiers}
Dans cette partie on se place dans le cas
d'un anneau d'entiers algébriques monogène,
c'est à dire de la forme $\mathcal{O}_{\bf
K}={\bf Z}[\theta]$.
On sait \cite{MR619892} qu'un tel anneau admet un système de
numération canonique, c'est à dire qu'il
existe $\alpha$ dans l'anneau, tel que 
\[\mathcal{O}_K=\{\sum_{i=0}^n a_i\alpha^i,
	n\in{\bf N}, 0\leq a_i\le
	|\mathrm{N}_{{\bf
	K}/{\bf Q}}\}(\alpha)|\]

Si $\alpha$ est tel que $p=\mathrm{N}_{{\bf
K}/{\bf Q}}(\alpha)$ est un nombre premier
et donne un système de numération canonique,
on peut pour tout nombre entier strictement
positif $q$ minorant
l'ensemble des valeurs absolues des conjugués
de $\alpha$, considérer la fonction 
\[g_{\alpha,q}(u)=\left\{\aligned
	&\frac{u}{\alpha},\text{ si }\alpha|u\\
	&\frac{qu+\epsilon(-qu)}{\alpha},\text{
	sinon}\\
	\endaligned\right.\]
	Où $\epsilon(u)$ désigne l'unique entier
	cmpris entre $0$ et $p-1$, tel que
	$u-\epsilon(u)$ soit divisible par
	$\alpha$.

	Par construction de $g_{\alpha,q}$, et du
	fait du choix de $q$, qui permet de
	controler la taille de l'image par
	$g_{\alpha,q}$ d'un
	élément, la suite des itérés
	$g_{\alpha,q}^m(u)$ est ultimement
	périodique. On obtient donc le  résultat
	suivant :
	\begin{theo}
		Soient $\psi$ une fonction de $\bf N$ dans
		$\overline{\bf N}$, et $a_i$ une suite
		d'éléments de $\{1,\dots p-1\}$.
		La série entière $G(T)=\sum_{n=0}^\infty
		a_n\alpha^{\psi(n)}T^n$ est une fraction
		rationnelle si et seulement si l'élément 
		$G(\frac{1}{q})$ du complété ${\bf
		K}_\alpha$ de $\bf K$
		en $(\alpha)$ est dans $\bf K$.
	\end{theo}

\subsection{Cas de l'anneau des séries formelles
${\bf F}_q[[T]]$}
On considère l'application $\mathcal{S}$ définie sur
${\bf F}_q[[T]]$ par :
$$f\mapsto\begin{cases}
\frac{f}{T},& \text{ si } f(0)=0\cr
\frac{(1+T)f-f(0)}{T},&\text{sinon}\cr
\end{cases}
$$
Et la transformation $\phi$ sur ${\bf
F}_q[[T]]$, définie par 
$$\phi(f)=\sum_{n=0}^\infty \mathcal{S}^n(f)(0)T^n.$$
Notons pour une série 
$\sum_{n=0}^\infty a_n T^n$, 
$r_n(f)$
le nombre de coefficients non nuls d'indice
inférieur ou égal à $n$.
\begin{prop}
La transformation $\phi$ est une isométrie de
${\bf F}_q[[T]]$, d'isométrie réciproque :
$$\phi^{-1}(\sum_{n=0}^\infty
a_nT^n)=\sum_{n=0}^\infty
a_n\frac{T^n}{(1+T)^{r_n(f)}}.$$
\end{prop}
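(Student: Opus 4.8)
The plan is to transpose, almost verbatim, the argument already carried out for $\phi_{p,q}$ on ${\bf Z}_p$, reading the uniformizer $T$ for $p$ and the unit $1+T$ for $q$. Write $f_n=\mathcal{S}^n(f)$ and $a_n=f_n(0)$, so that by definition $\phi(f)=\sum_{n\geq 0}a_nT^n$, and let $\chi_n$ equal $1$ when $a_n\neq 0$ and $0$ otherwise. The first step is to record the fundamental recurrence
\[
T f_{n+1}=(1+T)^{\chi_n}f_n-a_n,
\]
which is just the two cases in the definition of $\mathcal{S}$ written as a single formula: when $a_n=0$ the right-hand side is $f_n$, when $a_n\neq 0$ it is $(1+T)f_n-a_n$. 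This is the exact analogue of the recurrence $p\,u_{n+1}=q^{\chi_p(u_n)}u_n+\epsilon_0(-qu_n)$, and one checks along the way that $\mathcal{S}$ indeed maps ${\bf F}_q[[T]]$ into itself.

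For the isometry I would first establish the difference formula: if $f=g+T^n h$ with $h(0)\neq 0$, then for every $k\leq n$
\[
\mathcal{S}^k(f)-\mathcal{S}^k(g)=T^{\,n-k}(1+T)^{R_k}h,
\]
where $R_k=\#\{\,0\leq i\leq k-1 : \mathcal{S}^i(g)(0)\neq 0\,\}$. This is proved by induction on $k$: as long as $k<n$ the difference is divisible by $T$, so $\mathcal{S}^k(f)$ and $\mathcal{S}^k(g)$ have equal constant term and $\mathcal{S}$ applies the same branch to both; subtracting the two recurrences then multiplies the difference by $1/T$ (respectively by $(1+T)/T$) exactly when $R_k$ does not increase (respectively increases). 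Taking $k=n$ gives $\mathcal{S}^n(f)-\mathcal{S}^n(g)=(1+T)^{R_n}h$, whose constant term is $h(0)\neq 0$; hence $\phi(f)$ and $\phi(g)$ agree in their first $n$ coefficients and differ in the $n$-th, so $\phi$ preserves the valuation and is an isometry.

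For surjectivity and the inverse formula I would telescope the recurrence. Setting $R_n=\sum_{i=0}^{n-1}\chi_i$ and $w_n=T^n f_n/(1+T)^{R_n}$, the recurrence yields $w_{n+1}=w_n-a_nT^n/(1+T)^{R_{n+1}}$, whence
\[
f=\frac{T^n f_n}{(1+T)^{R_n}}+\sum_{i=0}^{n-1}\frac{a_iT^i}{(1+T)^{R_{i+1}}}.
\]
Since $|T^n f_n|\to 0$ and $(1+T)^{-R_n}$ is a unit, the remainder tends to $0$, so $f=\sum_{i\geq 0}a_iT^i/(1+T)^{R_{i+1}}$; because $R_{i+1}$ is exactly $r_i$, the number of nonzero coefficients of index $\leq i$ in $\phi(f)=\sum a_nT^n$, this is the claimed inverse $\Phi$ applied to $\phi(f)$, which proves $\Phi\circ\phi=\mathrm{id}$. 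This is the analogue of formula~(\ref{formule2}). To get $\phi\circ\Phi=\mathrm{id}$ (hence surjectivity) I would verify the intertwining relation $\mathcal{S}\circ\Phi=\Phi\circ\sigma$, where $\sigma$ is the shift $\sum a_nT^n\mapsto\sum a_{n+1}T^n$: splitting on whether $a_0=0$, a direct computation shows that applying $\mathcal{S}$ to $\Phi(f)$ clears the $i=0$ term and readjusts the exponent $r_{i+1}(f)$ of each surviving term to $r_i(\sigma f)$. Iterating and using $\Phi(g)(0)=g(0)$ gives $\mathcal{S}^n(\Phi(f))(0)=a_n$, that is $\phi(\Phi(f))=f$.

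The two inductions and the telescoping are routine; the one delicate point, where sign and indexing errors are easy to commit, is the bookkeeping of the exponent $r_n$ across the shift in the intertwining identity. One must keep track of the off-by-one $r_{i+1}(f)=r_i(\sigma f)$ when $a_0=0$ versus $r_{i+1}(f)=r_i(\sigma f)+1$ when $a_0\neq 0$, which is precisely what forces the two branches of $\mathcal{S}$ to collapse to a single shift on the $\Phi$-side. Once this matching is set up correctly, the isometry and the explicit inverse follow in perfect parallel with the $p$-adic case.
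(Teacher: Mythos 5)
Your proof is correct and takes essentially the same route as the paper: the paper's own proof of this proposition is the single remark that the argument is analogous to the one given for ${\bf Z}_p$, and your writeup is exactly that transposition carried out in full --- the recurrence $Tf_{n+1}=(1+T)^{\chi_n}f_n-a_n$, the difference formula $\mathcal{S}^k(f)-\mathcal{S}^k(g)=T^{n-k}(1+T)^{R_k}h$ giving the isometry, the telescoping giving $\Phi\circ\phi=\mathrm{id}$, and the intertwining $\mathcal{S}\circ\Phi=\Phi\circ\sigma$ giving $\phi\circ\Phi=\mathrm{id}$. The delicate exponent bookkeeping $R_{i+1}=r_i(\phi(f))$ and the shift identities for $r_n$ are handled correctly, so nothing needs fixing.
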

\begin{proof}
La démonstration est analogue à celle faite
pour ${\bf Z}_p$.
\end{proof}
Si on associe à une fraction rationnelle
sous forme irréductible
$P/Q$ de
${\bf F}_p[[T]]$ sa hauteur
$H(P/Q)=\max\{\deg P,\deg Q\}$, l'opérateur
$S$ vérifie naturellement $H(S(P/Q))\leq
H(P/Q)$. Or l'ensemble des fractions
rationnelles de hauteur bornée est fini, donc
la suite $S^n(f)$ est périodique si et
seulement si $f$ est une fraction
rationnelle.

\rotatebox{90}{
\begin{tabular}{|l|l|l|l|}
 \hline
p&q&u&\hfil{\bf Cycle de p\'eriodes
enti\`eres}\hfil\\
\hline
2&3&-17&-25,-37,-55,-82,-41,-61,-91,-136,-68,-34,-17 \\ \hline
3&11&-25&-91,-333,-111,-37,-135,-45,-15,-5,-18,-6,-2,-7,-25\\ \hline
3&13&-47&-203,-879,-293,-1269,-423,-141,-47\\ \hline
5&7&-32&-44,-61,-85,-17,-23,-32\\ \hline
5&13&-2&-5,-1,-2\\ \hline
7&17&-9&-21,-3,-7,-1,-2,-4,-9\\ \hline
7&19&-13&-35,-5,-13\\ \hline
11&13&-17&-20,-23,-27,-31,-36,-42,-49,-57,-67,-79,-93,-109,-128,-151,-178,
-210,-248,-293,-346,-408,-482,-569,\\
 & & &-672,-794,-938,-1108,-1309,-119,-140,-165
,-15,-17\\ \hline
11&19&-13&-22,-2,-3,-5,-8,-13\\ \hline
11&37&-13&-43,-144,-484,-44,-4,-13\\ \hline
13&19&-10&-14,-20,-29,-42,-61,-89,-130,-10\\ \hline
13&47&-10&-36,-130,-10\\ \hline
17&29&-13&-22,-37,-63,-107,-182,-310,-528,-900,-1535,-2618,-154,-262,-446,
-760,-1296,-2210,-130,-221,-13\\ \hline
17&37&-8&-17,-1,-2,-4,-8\\  \hline
17&41&-21&-50,-120,-289,-17,-1,-2,-4,-9,-21\\  \hline
17&73&-4&-17,-1,-4\\ \hline
19&29&-41&-62,-94,-143,-218,-332,-506,-772,-1178\\  \hline
19&83&-74&-323,-17,-74\\ \hline
23&29&-15&-18,-22,-27,-34,-42,-52,-65,-81,-102,-128,-161,-7,-8,-10,-12, -15 \\ \hline
23&53&-20&-46,-2,-4,-9,-20\\ \hline
29&47&-13&-21,-34,-55,-89,-144,-233,-377,-13\\ \hline
37&47&-19&-24,-30,-38,-48,-60,-76,-96,-121,-153,-194,-246,-312,-396,-503,
-638,-810,-1028,-1305,-1657,-2104,\\
 & & &-2672,-3394,-4311,-5476,-148,-4,-5,-6,
-7,-8,-10,-12,-15,-19\\ \hline
41&53&-23&-29,-37,-47,-60,-77,-99,-127,-164,-4,-5,-6,-7,-9,-11,-14,
-18,-23\\ \hline
47&83&-17&-30,-52,-91,-160,-282,-6,-10,-17,-30,-52,-91,-160,-282,-6,-10, 
-17\\ \hline
71&97&-13&-17,-23,-31,-42,-57,-77,-105,-143,-195,-266,-363,-495,-676,-923,
-13\\ \hline
73&97&-23&-30,-39,-51,-67,-89,-118,-156,-207,-275,-365,-5,-6,-7,-9,-11,
-14,-18,-23 \\
\hline
\end{tabular}}

\bibliographystyle{alpha}
\bibliography{./tdn.bib}

\begin{thebibliography}{Ami75}

\bibitem[Ami75]{MR0447195}
Yvette Amice.
\newblock {\em Les nombres {$p$}-adiques}.
\newblock Presses Universitaires de France, Paris, 1975.
\newblock Pr{{\'e}}face de Ch. Pisot, Collection SUP: Le Math{{\'e}}maticien,
  No. 14.

\bibitem[Kov81]{MR619892}
B.~Kov{{\'a}}cs.
\newblock Canonical number systems in algebraic number fields.
\newblock {\em Acta Math. Acad. Sci. Hungar.}, 37(4):405--407, 1981.

\bibitem[Mih05]{MR2185753}
Preda Mih{\u{a}}ilescu.
\newblock Reflection, {B}ernoulli numbers and the proof of {C}atalan's
  conjecture.
\newblock In {\em European {C}ongress of {M}athematics}, pages 325--340. Eur.
  Math. Soc., Z{\"u}rich, 2005.

\bibitem[Sch76]{MR0429784}
Hans~Peter Schlickewei.
\newblock On products of special linear forms with algebraic coefficients.
\newblock {\em Acta Arith.}, 31(4):389--398, 1976.

\bibitem[Sch80]{MR568710}
Wolfgang~M. Schmidt.
\newblock {\em Diophantine approximation}, volume 785 of {\em Lecture Notes in
  Mathematics}.
\newblock Springer, Berlin, 1980.

\end{thebibliography}
\end{document}